\numberwithin{equation}{section}
\newtheorem{theorem}{Theorem}[section]
\newtheorem{lem}[theorem]{Lemma}
\newtheorem{prop}[theorem]{Proposition}
\newtheorem{cor}[theorem]{Corollary}
\newtheorem{deff}[theorem]{Definition}
\newtheorem{thm}[theorem]{Theorem}
\newtheorem*{theorem*}{Theorem}
\newtheorem*{theorem**}{Claim}
\newtheorem*{theorem***}{}
\theoremstyle{remark}
\newtheorem{remark}[theorem]{Remark}
\newcommand{\D}{\mathscr{D}^s_{\mu}}
\newcommand{\Diff}{\mathscr{D}}
\newcommand{\He}{\mathscr{A}_{\mu}^s}
\newcommand{\Hel}{\mathscr{A}_{\mu, 0}^s}
\newcommand{\reals}{\mathbb{R}}
\newcommand{\ad}{\mathrm{ad}}
\newcommand{\grad}{\nabla}
\newcommand{\Ad}{\mathrm{Ad}}
\newcommand{\Tr}{\mathrm{Tr}}
\DeclareMathOperator{\curl}{curl}
\DeclareMathOperator{\diver}{div}
\newcommand\restr[2]{{
		\left.\kern-\nulldelimiterspace 
		#1 
		\vphantom{\big|} 
		\right|_{#2} 
}}
\begin{document}
	
	\title[Axisymmetric diffeomorphisms and ideal fluids]{Axisymmetric diffeomorphisms and ideal fluids
	\\ on Riemannian 3-manifolds}
	\author{Leandro Lichtenfelz} 
	\address{Department of Mathematics, University of Pennsylvania, Philadelphia, PA 19104, USA} 
	\email{llichte2@gmail.com} 
	\author{Gerard Misio{\l}ek} 
	\address{Department of Mathematics, University of Notre Dame, Notre Dame, IN 46556, USA} 
	\email{gmisiole@nd.edu} 
	\author{Stephen C. Preston} 
	\address{Department of Mathematics, Brooklyn College, CUNY, New York, NY 10468 
	and Graduate Center, CUNY, New York, NY 10016, USA} 
	\email{Stephen.Preston@brooklyn.cuny.edu} 

\subjclass{Primary 58D05; Secondary 35Q31, 47H99} 

	\date{\today}
	
	\maketitle

	\begin{abstract} 
	We study the Riemannian geometry of 3D axisymmetric ideal fluids. 
	We prove that the $L^2$ exponential map on the group of volume-preserving diffeomorphisms 
	of a $3$-manifold is Fredholm along axisymmetric flows with sufficiently small swirl. 
	Along the way, we define the notions of axisymmetric and swirl-free diffeomorphisms of any manifold 
	with suitable symmetries and show that such diffeomorphisms form a totally geodesic submanifold 
	of infinite $L^2$ diameter inside the space of volume-preserving diffeomorphisms whose diameter 
	is known to be finite. 
	As examples we derive the axisymmetric Euler equations on $3$-manifolds equipped with 
	each of Thurston's eight model geometries. 
	
	\end{abstract}
	
	\tableofcontents

	\section{ Introduction }
	\label{sec:Introduction}
	%
	%
	
	Let $M$ be an oriented Riemannian manifold (possibly with boundary) of dimension $n$
	and let $\mu$ be its Riemannian volume.
	Let $\mathscr{D}_\mu$ be the group of smooth diffeomorphisms of $M$ preserving the volume form.
	Consider the right-invariant (weak) Riemannian metric on $\mathscr{D}_\mu$ which
	at the identity diffeomorphism $e$ is given by the $L^2$ inner product
	\begin{equation} \label{eq:L2}
	\langle v, w \rangle_{L^2}
	=
	\int_M \langle v(x), w(x) \rangle \, d\mu,
	\qquad\quad
	v, w \in T_e\mathscr{D}_\mu.
	\end{equation}

	In a celebrated paper V. Arnold \cite{Ar1} showed that a geodesic $\gamma(t)$ of the $L^2$ metric
	in $\mathscr{D}_\mu$ starting at $\gamma(0)=e$ in the direction $u_0 \in T_e\mathscr{D }_\mu$
	corresponds to the solution of the Cauchy problem for the incompressible Euler equations in $M$,
	namely
	\begin{align} \label{eq:Euler} \nonumber
	&\partial_t u + \nabla_u u = - \nabla p
	\\
	&\mathrm{div}\, u = 0
	\\ \nonumber
	&u(0) = u_0
	\end{align}
	where $p$ is the pressure function and $u$ is the fluid velocity (parallel to the boundary
	if $\partial M \neq 0$).
	The correspondence is obtained by solving the initial value problem for the flow equation
	\begin{align} \label{eq:Flow}
	&\frac{d\gamma}{dt}(t,x) = u(t,\gamma(t,x))
	\\ \nonumber
	&\gamma(0,x)=x,
	\qquad\qquad\quad
	x\in M.
	\end{align}

	The two formulations of the equations of fluid motion are only formally equivalent.
	It turns out that if $\mathscr{D}_\mu$ is equipped with a suitable topology then the geodesic formulation
	has an important technical advantage.
	For example, Ebin and Marsden \cite{EbinMarsden} showed that the completion of
	the group of diffeomorphisms in the Sobolev $H^s$ topology with $s>n/2+1$
	becomes a smooth Hilbert manifold $\mathscr{D}^s_\mu$ and a topological group.
	In this case the geodesic equation can be uniquely solved (at least for short times)
	by the method of successive approximations for ODEs in Banach spaces.
	Consequently, the geodesics of the $L^2$ metric depend smoothly (in the $H^s$ topology)
	on the initial data and there is a well-defined smooth exponential map
	\begin{equation} \label{eq:exp}
	\exp_e : \mathcal{U} \subseteq T_e\mathscr{D}^s_\mu \to \mathscr{D}^s_\mu,
	\qquad
	\exp_e tu_0 := \gamma(t)
	\end{equation}
	where $\gamma(t)$ is the unique geodesic of the metric \eqref{eq:L2}
	with $\gamma(0)=e$ and $\dot{\gamma}(0)=u_0$ defined on some open subset $\mathcal{U}$.
	A similar calculation as in finite dimensions shows that ${d}\exp_e(0)=\mathrm{id}$
	and therefore the $L^2$ exponential map is a local diffeomorphism near the identity in $\mathscr{D}^s_\mu$
	by the inverse function theorem.
	Furthermore, if the underlying manifold $M$ is two-dimensional then this map can be extended to
	the whole tangent space $T_e\mathscr{D}^s_\mu$.\footnote{This is essentially a consequence of
	a theorem of Wolibner \cite{wol}.}
	
	The study of the structure of singularities of the map \eqref{eq:exp} is of considerable interest.
	The question whether geodesics in $\mathscr{D}^s_\mu$ have conjugate points was raised by
	Arnold \cite{Ar1, Ar2, AK} and, subsequently, various examples have been constructed by
	Misio{\l}ek \cite{Mi92, Mi96}, Shnirelman \cite{Shn94}, Preston \cite{Pr06}, Benn \cite{Benn1, Benn2}
	and others.
	Further progress was made by Ebin et al. \cite{EMP06} who proved that if $n=2$ then
	the $L^2$ exponential map is necessarily a non-linear Fredholm map of index zero. In this case,
	it is possible to obtain normal forms for the $L^2$ exponential map near its singularities (see Lichtenfelz \cite{L}).
	Moreover, when $M$ is the flat 2-torus Shnirelman \cite{Shn05} showed that it is a quasiruled Fredholm map.
	However, the picture changes in dimension $n=3$. Simple examples of a steady rotation of
	the solid torus in $\mathbb{R}^3$ or a left-invariant Killing field on the 3-sphere show that the Fredholm property
	may fail in general, cf. \cite{EMP06}.
	Further examples can be found in \cite{Pr06}
	and, more recently, in Preston and Washabaugh \cite{PrWash}.
	Yet despite these examples the failure of Fredholmness seems to be borderline and there is some evidence
	that exponential maps of right-invariant $H^r$ metrics of positive Sobolev index $r>0$ are nonlinear Fredholm
	of index zero, cf. \cite{MP10}.
	
	In this paper we study the $L^2$ exponential map in the three-dimensional case and prove that
	it retains the Fredholm property when restricted to certain subsets of axisymmetric diffeomorphisms of 3-manifolds.
	The proof of Fredholmness when $n=2$ relies on compactness properties of the algebra coadjoint operator.
	The fact that the geometry of coadjoint orbits of volume-preserving diffeomorphisms of a 3-manifold
	is considerably more complicated (a source of many difficulties encountered in 3D hydrodynamics)
	is one of the reasons why the structure of singularities of the $L^2$ exponential map is not yet
	completely understood when $n=3$.
	On the other hand, if the manifold admits symmetries then the situation simplifies.
	For example, if the velocity field initially commutes with an isometry of the underlying manifold
	then it does so necessarily for all time. When combined with the fact that the swirl is also conserved in time
	this essentially reduces the problem from three to two spatial dimensions. A perturbation argument then
	yields Fredholmness of the exponential map for axisymmetric ideal fluids with suitably small swirls.
	In Euclidean space there are two simple types of isometries:
	translations which yield equations that completely decouple
	and
	rotations which yield equations with a singularity located on the axis of rotation.
	In other geometries we find equations with nontrivial twisting but without singular behavior.
	A number of examples together with explicit calculations are included in the later sections of the paper.
	\smallskip
	
		The remainder of the paper has the following structure.
		After reviewing some necessary background in Section \ref{sec:FJ}, in Section \ref{sec:ax_diff}
		we develop a general framework for axisymmetric flows on Riemannian manifolds
		and show that the notion of swirl generalizes in a natural way.
		
		In Section \ref{sec:swirlfree} we construct the group of swirl-free diffeomorphisms, and show that it has infinite diameter, despite being a totally geodesic submanifold of $\D$ (which has finite diameter; see \cite{ShnVol}).
		
		In Section $\ref{section_fred}$ we prove that swirl is transported by the fluid and use this fact to extend
		the classical global wellposedness result for swirl-free flows in $\mathbb{R}^3$ to this setting.
		Finally, in Section $\ref{sec:ex}$ we present a number of examples of axisymmetric flows
		on 3-manifolds equipped with the model geometries of Thurston. One family of examples includes Boothby-Wang
fibrations over constant-curvature surfaces, for which we will show the Euler-Arnold equation is
\begin{equation}\label{fibration}
\partial_t \Delta f + \{f,\Delta f\} + \{f,\sigma\} = 0, \qquad \partial_t \sigma + \{f,\sigma\} = 0,
\end{equation}
where the Laplacian and Poisson bracket are computed on either the plane, the sphere, or the hyperbolic plane.

\textit{Acknowledgements.} We would like to thank David Ebin, Herman Gluck and Karsten Grove for helpful suggestions.
The work of S.C.P. was partially supported by Simons Foundation Collaboration Grant no. 318969.	
	
\section{Background and the setup}
	\label{sec:FJ}
	
	In this section we recall a few basic facts about Fredholm maps and describe the geometric setup
	used in the paper. Our main references regarding the geometry of the group of diffeomorphisms
	are \cite{EbinMarsden}, \cite{EMP06}, \cite{Mi92} and \cite{MP10}.
	
	A bounded linear operator between Banach spaces $X$ and $Y$ is said to be Fredholm if it has
	finite dimensional kernel and cokernel. Consequently, its range is closed by the open mapping theorem.
	If $X=Y$ then the set of Fredholm operators forms an open subset in the space of
	all bounded linear operators on $X$ which is invariant under products and adjoints.
	Furthermore, it is also stable under perturbations by compact operators as well as
	perturbations by bounded operators of suitably small norm.
	The index of an operator $L$ given by
	$
	\mathrm{ind}\, L = \dim\ker L - \dim\mathrm{coker}\, L
	$
	defines a map into $\mathbb{Z}$ which is constant on connected components of
	the set of Fredholm operators.\footnote{A good reference for these facts is Kato \cite{Kato}.}
	A nonlinear generalization of this notion was introduced by Smale \cite{Sm65}.
	A $C^1$ map $f$ between Banach manifolds is called Fredholm if its Fr\'echet derivative is a Fredholm operator
	at each point in the domain of $f$. If the domain is connected then the index of $f$ is by definition
	the index of its derivative.
	
	As in classical Riemannian geometry a singular value of the exponential map of an infinite-dimensional
	(weak) Riemannian manifold is called a conjugate point. However, in contrast with the finite dimensional case
	it is necessary to distinguish two types of such points depending on how the derivative of the exponential map
	fails to be an isomorphism.
	If $\gamma(t)$ is a geodesic then a point $q=\gamma(1)$ is called monoconjugate
	if $d \exp_p(t\dot{\gamma}(0))$
	is not injective as a linear map between the tangent spaces at $p$ and $q$;
	it is called epiconjugate if $d \exp_p(t\dot{\gamma}(0))$ is not surjective.
	Furthermore, it is not difficult to see that the order of conjugacy can be infinite and that finite geodesic segments
	may contain clusters of conjugate points of either type. A simple example of the former is provided by any pair of
	antipodal points on the unit sphere in a Hilbert space equipped with the round metric.
	An example of the latter can be found on the infinite-dimensional ellipsoid constructed by
	Grossman \cite{Grossman65}.
	Such phenomena are ruled out if the exponential map is Fredholm.
	
	Given $u_0 \in T_{e}\mathscr{D}_{\mu}^{s}$ let $\gamma(t)$ be the associated $L^2$ geodesic
	starting at the identity element in $\mathscr{D}^s_\mu$.
	A very useful tool in the study of the exponential map is the Jacobi equation along $\gamma(t)$,
	namely
	\begin{align*} 
	J'' + \mathcal{R}(J, \dot{\gamma})\dot{\gamma} = 0
	\end{align*}
	where $'$ denotes the covariant derivative of the $L^2$-metric in the direction of $\gamma$
	and $\mathcal{R}$ is the $L^2$ curvature operator along $\gamma$.
	It is known that $\mathcal{R}$ is a bounded tri-linear operator on each tangent space
	$T_\gamma\mathscr{D}^s_\mu$ from which it follows that the Cauchy problem for the Jacobi equation
	admits unique solutions which persist as long as the geodesic exists.
	In fact, solutions subject to the initial conditions $J(0)=0$ and $J'(0)=w$ provide precise information
	about the derivative of the exponential map by means of the formula
	$d \exp_e(tu_0) tw = J(t)$.
	In what follows it will be convenient to introduce the following operator on $T_e\mathscr{D}_\mu^s$
	\begin{align} \label{eq:Phi}
	w \mapsto \Phi_t(w) = dL_{\gamma(t)^{-1}} d\exp_e(tu_0) tw
	\end{align}
	where $L_\eta$ is the left multiplication in $\mathscr{D}^s_\mu$ by $\eta$ and decompose $\Phi_t$
	into a sum of two bounded linear operators as described in the following proposition.
	\begin{prop} \label{prop:OLD}
		$\Phi_t$ is a bounded linear operator on $T_{e}\mathscr{D}_{\mu}^{s}$ for each $t \geq 0$.
		Furthermore, if $u_0 \in T_e\mathscr{D}_\mu^{s+1}$ then 
		\begin{equation} \label{eq:solop}
		\Phi_t = \Omega_t - \Gamma_t
		\end{equation}
		where $\Omega_t$ and $\Gamma_t$ are bounded operators on $T_e\mathscr{D}_\mu^s$ given by
		\begin{align}
		&w \to \Omega_t w
		=
		\int_{0}^{t}\mathrm{Ad}_{\gamma_\tau^{-1}}\mathrm{Ad}_{\gamma_\tau^{-1}}^{*} w \, d\tau
		\label{eq:Omega} \\  \label{eq:Gamma}
		&w \to \Gamma_tw
		=
		\int_{0}^{t}\mathrm{Ad}_{\gamma_\tau^{-1}}
		\mathrm{Ad}_{\gamma_\tau^{-1}}^\ast \mathrm{ad}_{\Phi_\tau w}^\ast u_0 \, d\tau
		\end{align}
		and the two coadjoint operators are defined by the formulas
		$\langle \mathrm{Ad}_{\eta}^{*}v,w\rangle _{L^{2}} = \langle v,\mathrm{Ad}_{\eta}w \rangle _{L^{2}}$
		and
		$\langle \mathrm{ad}_{v}^{*}u,w \rangle _{L^{2}} = \langle u,\mathrm{ad}_{v}w \rangle _{L^2}$
		for any $u, v, w \in T_e\mathscr{D}_\mu^s$.
	\end{prop}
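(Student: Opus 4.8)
\textit{Proof strategy.} The plan is to convert the second-order Jacobi equation hidden inside $\Phi_t$ into a first-order Volterra equation, by exploiting the group structure of $\D$ together with the conservation law equivalent to the Euler equation. Write $J(t)=d\exp_e(tu_0)tw$ for the Jacobi field along $\gamma$ with $J(0)=0$ and $J'(0)=w$, and realize it as $J(t)=\partial_s\eta(s,t)|_{s=0}$ through the geodesic variation $\eta(s,t)=\exp_e\bigl(t(u_0+sw)\bigr)$. Let $u(s,t)=\partial_t\eta(s,t)\circ\eta(s,t)^{-1}$ be the associated Eulerian velocity, so that $u(s,\cdot)$ solves the Euler equation with $u(s,0)=u_0+sw$ and $u(0,\cdot)=u$ is the velocity field of $\gamma$; set $\bar J(t)=J(t)\circ\gamma(t)^{-1}$ and $y(t)=\partial_s u(s,t)|_{s=0}$. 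The first step is the identification $\Phi_t w=dL_{\gamma(t)^{-1}}J(t)=\Ad_{\gamma(t)^{-1}}\bar J(t)$, which lets me follow $\Phi_t$ through the Lie algebra.

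Next I would differentiate, in $s$ at $s=0$, the two relations defining the variation. From the flow equation $\partial_t\eta=u\circ\eta$ one gets the Eulerian Jacobi equation $\partial_t\bar J=y+\ad_{u}\bar J$; combined with $\partial_t\Ad_{\gamma(t)^{-1}}=-\Ad_{\gamma(t)^{-1}}\ad_{u}$, the $\ad_{u}$ terms cancel, leaving
\[
\partial_t\Phi_t w=\Ad_{\gamma(t)^{-1}}y(t),\qquad \Phi_0 w=0 .
\]
Differentiating the conservation law $\Ad^{*}_{\eta(s,t)}u(s,t)=u_0+sw$ (Kelvin's theorem along each geodesic $\eta(s,\cdot)$), together with $\partial_s\Ad^{*}_{\eta(s,t)}|_{s=0}=\Ad^{*}_{\gamma(t)}\ad^{*}_{\bar J(t)}$, yields $y(t)=\Ad^{*}_{\gamma(t)^{-1}}w-\ad^{*}_{\bar J(t)}u(t)$. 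Substituting $u(t)=\Ad^{*}_{\gamma(t)^{-1}}u_0$, the intertwining identity $\ad^{*}_{v}\Ad^{*}_{\eta}=\Ad^{*}_{\eta}\ad^{*}_{\Ad_{\eta}v}$ with $\eta=\gamma(t)^{-1}$, and $\Ad_{\gamma(t)^{-1}}\bar J(t)=\Phi_t w$, this collapses to $y(t)=\Ad^{*}_{\gamma(t)^{-1}}\bigl(w-\ad^{*}_{\Phi_t w}u_0\bigr)$. Inserting this into the displayed identity and integrating from $0$ to $t$ then produces exactly $\Phi_t=\Omega_t-\Gamma_t$ with $\Omega_t,\Gamma_t$ as in \eqref{eq:Omega}--\eqref{eq:Gamma}.

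For boundedness I would argue as follows. For Part 1, $\exp_e$ is smooth on the Hilbert manifold $\D$ by \cite{EbinMarsden}, so $d\exp_e(tu_0)$ is a bounded operator into $T_{\gamma(t)}\D$; although left translation on $\D$ is not smooth, the vectors in the image of $d\exp_e(tu_0)$ are the values at time $t$ of Jacobi fields vanishing at $t=0$, and for such vectors $dL_{\gamma(t)^{-1}}$ causes no loss of regularity, so that $\Phi_t$ is bounded on $T_e\D$; this is the content of the relevant estimates in \cite{EMP06} and \cite{MP10}. For the decomposition I would use the hypothesis $u_0\in T_e\mathscr{D}^{s+1}_\mu$: then $\gamma(t)\in\mathscr{D}^{s+1}_\mu$, its Jacobian lies in the algebra $H^s$, and hence $\Ad_{\gamma(t)^{-1}}$ and $\Ad^{*}_{\gamma(t)^{-1}}$ are bounded on $T_e\D$, which already makes $\Omega_t$ bounded. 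Moreover $\ad^{*}_{v}u_0$ equals the divergence-free part of $\bigl(\iota_v\,d(u_0^{\flat})\bigr)^{\sharp}$, so it carries one derivative of $u_0$ and none of $v$, whence $v\mapsto\ad^{*}_{v}u_0$ is bounded on $T_e\D$ once $u_0\in H^{s+1}$. The identity $\Phi_t w=\Omega_t w-\int_0^{t}\Ad_{\gamma_\tau^{-1}}\Ad^{*}_{\gamma_\tau^{-1}}\ad^{*}_{\Phi_\tau w}u_0\,d\tau$ is then a Volterra equation with a $\tau$-continuous kernel that is bounded on $T_e\D$, so successive approximation gives a solution $t\mapsto\Phi_t$ consisting of bounded operators, and $\Gamma_t=\Omega_t-\Phi_t$ is bounded as well.

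The main obstacle is exactly this analytic point. The algebraic reduction above is short and formal, but a priori $\Phi_t=dL_{\gamma(t)^{-1}}\circ d\exp_e(tu_0)$ looks as though it loses a derivative, since left translation on $\D$ is merely continuous; the real work is to verify that the loss does not occur, using the Leray projector, the divergence-free constraint, and volume preservation, together with (for the decomposition) the regularity $u_0\in H^{s+1}$ that absorbs the single derivative falling on $u_0$ in $\Gamma_t$. For these estimates I would rely on the analysis in \cite{EbinMarsden}, \cite{EMP06} and \cite{MP10}.
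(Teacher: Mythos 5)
Your derivation is correct and follows essentially the same route as the proof the paper cites (the paper itself gives no argument beyond the reference to \cite{MP10}, Thm.~5.6): there too the decomposition comes from linearizing the conservation law $\mathrm{Ad}^*_{\gamma(t)}u(t)=u_0$ along a variation of geodesics and integrating the resulting first-order equation for $\Phi_t$, with the hypothesis $u_0\in T_e\mathscr{D}_\mu^{s+1}$ absorbing the derivative falling on $u_0$ in $\Gamma_t$. Your sign conventions and the intertwining identity $\mathrm{ad}^*_v\mathrm{Ad}^*_\eta=\mathrm{Ad}^*_\eta\mathrm{ad}^*_{\mathrm{Ad}_\eta v}$ check out, so nothing further is needed.
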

	\begin{proof}
		See \cite{MP10}; Thm. 5.6.
	\end{proof}
	%
	%
	%
	\begin{remark} \label{rem:modulo}
		We emphasize that on $\mathscr{D}^s_\mu$ the decomposition \eqref{eq:solop}-\eqref{eq:Gamma}
		must be applied with care due to the loss of derivatives involved in calculating the differential of
		the left multiplication operator $L_\eta$.
		Recall that if $s>n/2+1$ then left translations are continuous but not differentiable in the $H^s$ topology.
		For this reason in several places in the paper it will be convenient to assume that
		the initial velocity $u_0$ is an element of $T_e\mathscr{D}_\mu$
		and, in particular, the proof of Fredholmness of the exponential map \eqref{eq:exp}
		will be carried out under the assumption that the corresponding $L^2$ geodesic $\gamma(t)$
		is $C^\infty$ smooth, (cf. e.g., \cite{EbinMarsden}; Thm. 12.1).
		The proof of Fredholmness in the general case when $u_0 \in T_e\mathscr{D}^s_\mu$
		follows then by a simple perturbation argument which is completely analogous
		to that in \cite{EMP06} or \cite{MP10} and can therefore be omitted.
	\end{remark}
	%
	
\section{Axisymmetric diffeomorphisms of 3-manifolds}
	\label{sec:ax_diff}
	
	In this section we introduce the notion of axisymmetric diffeomorphisms of a general Riemannian manifold $M$
	of dimension $n=3$ equipped with a smooth Killing vector field $K$.
	To simplify the exposition we assume that $M$ is compact (possibly with boundary)
	or $M$ is the three-dimensional Euclidean space.
	The flow of $K$ consists of isometries and its covariant derivative is antisymmetric, i.e.,
	\begin{equation*} \label{killingdef}
	\langle \nabla_w K , v \rangle + \langle w, \nabla_v K \rangle = 0
	\end{equation*}
	for all vector fields $v$ and $w$ on $M$. Among the standard examples are the isometric $\mathbb{R}$-action
	and the circle action on the Euclidean space $\mathbb{R}^3$ with cylindrical coordinates $(r, \theta, z)$
	where $K = \partial_z$ and $K = \partial_{\theta}$, respectively.\footnote{To obtain a compact manifold
	for these examples, we may work on $M=D^2\times \mathbb{S}^1$, the vertically periodic cylinder of radius $1$
	and height $2\pi$.}
	
	A divergence-free vector field $u$ on $M$ will be called \emph{axisymmetric} if $[K, u] = 0$.
	The set of axisymmetric vector fields will be denoted by $T_e\He$.
	
	If $M=\mathbb{R}^3$ and $K = \partial_{\theta}$ then the commutator condition is precisely
	the statement that components of $u$ do not depend on the $\theta$-variable
	which corresponds to the standard case of rotational symmetry.
	If $M=\mathbb{R}^3$ and $K = \partial_z$ then the components of $u$ are independent of the $z$-variable
	which reflects translational symmetry. All other infinitesimal isometries of Euclidean space are linear combinations
    of these with more complicated equations.
	
	In non-Euclidean geometries axisymmetric flows are more interesting.
	In fact, the translational case of $\mathbb{R}^3$ is essentially trivial because in this case
	the Euler equations \eqref{eq:Euler} completely decouple,
	while the rotational case is only slightly more complicated since $K$ vanishes along the $z$-axis.
	On the other hand, on a curved manifold such as the $3$-sphere $\mathbb{S}^3$
	there exist nonvanishing Killing fields for which the Euler equations do not decouple.
	\begin{prop} \label{prop_lie_algebra}
		The space $T_e\mathscr{A}_\mu$ of smooth axisymmetric vector fields on $M$
		is an infinite-dimensional Lie algebra.
	\end{prop}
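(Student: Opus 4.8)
The plan is to check that $T_e\mathscr{A}_\mu$ is a Lie subalgebra of the Lie algebra of all smooth vector fields on $M$ and then to exhibit an explicit infinite-dimensional family inside it; all the algebraic axioms will be inherited from the ambient Lie algebra, so the only real content is closure under the relevant operations and a dimension count.

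\emph{It is a Lie subalgebra.} That $T_e\mathscr{A}_\mu$ is a linear subspace is immediate, since $w\mapsto[K,w]$ and $w\mapsto\diver w$ are linear and (when $\partial M\neq\emptyset$) tangency to $\partial M$ is a linear condition. For closure under the bracket, let $u,v\in T_e\mathscr{A}_\mu$. The field $[u,v]$ is again divergence-free: using $\Lie_w\mu=(\diver w)\,\mu$ together with $\Lie_{[u,v]}=\Lie_u\Lie_v-\Lie_v\Lie_u$ we get $(\diver[u,v])\,\mu=\Lie_u\Lie_v\mu-\Lie_v\Lie_u\mu=0$, and if $\partial M\neq\emptyset$ the bracket of two fields tangent to $\partial M$ is tangent to $\partial M$. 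Moreover $[K,[u,v]]=0$ by the Jacobi identity, $[K,[u,v]]=[[K,u],v]+[u,[K,v]]=0$, since $[K,u]=[K,v]=0$. Hence $[u,v]\in T_e\mathscr{A}_\mu$, and bilinearity, antisymmetry and the Jacobi identity descend from the full vector-field Lie algebra.

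\emph{It is infinite-dimensional.} I would use the linear map $\phi\mapsto\phi K$ from $K$-invariant smooth functions on $M$ (those with $K\phi=0$) into $T_e\mathscr{A}_\mu$. Indeed $\phi K$ is divergence-free since $\diver(\phi K)=\phi\,\diver K+d\phi(K)=0$, using that $K$ is Killing — so $\diver K=\Tr\grad K=0$ because $\grad K$ is antisymmetric — and that $K\phi=0$; it commutes with $K$ since $[K,\phi K]=(K\phi)K=0$; and it is tangent to $\partial M$ because the flow of $K$ is by isometries of $M$ and hence $K$ is tangent to $\partial M$. The space of $K$-invariant functions is infinite-dimensional: in the model cases of this section it consists of all functions of $(r,z)$ (rotational symmetry) or of $(r,\theta)$ (translational symmetry), and in general it contains at least all functions pulled back along the orbit projection on the open set where $K\neq0$.

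The one point deserving a word of care is that $\phi\mapsto\phi K$ should send linearly independent functions to linearly independent fields; this is clear on the open set $\{K\neq0\}$, which — since the given Killing field is nontrivial and its zero set is a disjoint union of totally geodesic submanifolds of even, hence positive, codimension — is dense in $M$, so a $K$-invariant function vanishing there vanishes identically. (Alternatively, one may invoke the two-dimensional reduction developed in the later sections, under which divergence-free axisymmetric fields correspond to an essentially arbitrary scalar stream function on the quotient surface.) I do not expect any genuine obstacle here: the proposition is a bookkeeping statement that prepares the constructions of the following sections.
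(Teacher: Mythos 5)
Your proof is correct and its core --- closure of the bracket via the Jacobi identity, with the remaining Lie algebra axioms inherited from $T_e\mathscr{D}_\mu$ --- is exactly the paper's argument. The paper's proof is one line and does not address infinite-dimensionality at all, so your explicit injective family $\phi\mapsto\phi K$ of swirl fields (with density of $\{K\neq 0\}$ ensuring injectivity) is a genuine and worthwhile addition; just note that it requires the space of $K$-invariant functions to be infinite-dimensional, which holds in all the examples considered in the paper but can fail for a Killing field with dense orbits (e.g.\ a totally irrational direction on $\mathbb{T}^3$), an edge case the paper itself also ignores.
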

	\begin{proof}
		Since $T_{e}\Diff_{\mu}$ is an (infinite-dimensional) Lie algebra we need only check that $[u, v]$
		commutes with $K$ whenever $u, v \in T_e\mathscr{A}_\mu$. This however follows at once
		from the Jacobi identity
		$
		\big[ K, [u,v] \big] = - \big[ u,[v,K] \big] - \big[ v,[K,u] \big] = 0.
		$
	\end{proof}
	A volume-preserving diffeomorphism of $M$ will be called \emph{axisymmetric}
	if it commutes with the flow of the Killing field $K$.
	It turns out that the Sobolev $H^s$ completion of the set of all such diffeomorphisms
	$$
	\He = \big\{ \eta \in \D : \varphi_t \circ \eta = \eta \circ \varphi_t, \; \varphi_t \; \text{is a flow of $K$} \big\}
	$$
	can be equipped with the structure of a smooth Hilbert manifold whose tangent space at $\eta \in \He$ is
	\begin{gather}
	\begin{split} \label{eq_axi_tangent}
	T_\eta\He
	=
	d_eR_{\eta} ( T_e\He )
	=
	\big\{ v \circ \eta : v \in T_e\He \big\}.
	\end{split}\end{gather}

	If $M$ is compact the following proposition can be essentially deduced from the results of
	 Omori\footnote{In Omori's terminology $\mathscr{A}^\infty(M)$ is a strong ILH Lie group and a Sobolev manifold.
	 In the boundary case one needs to assume, for technical reasons, that $M$ is diffeomorphic to
	 $\partial M \times [0, 1)$ in some neighborhood of $\partial M$.}
	 \cite{O}; Chap. VIII, XVI.
	 The direct proof given below (following the unpublished paper \cite{EbinPreston}) has an advantage of being
	 easily adapted to noncompact manifolds (with asymptotically Euclidean ends).
	\begin{prop}\label{prop_axi_manifold}
		The set $\He$ is a topological group and a smooth Hilbert submanifold of $\D$.
	\end{prop}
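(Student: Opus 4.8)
The plan is to exhibit $\He$ locally as a submanifold of $\D$ by constructing charts adapted to the $K$-symmetry, following the strategy of Ebin--Marsden for $\D$ itself and its adaptation in \cite{EbinPreston}. First I would recall that $\D$ is modeled near a point $\eta$ on the Hilbert space $T_e\D = \{v \in H^s(TM) : \diver v = 0,\ v \parallel \partial M\}$, via the chart given by the $L^2$-orthogonal (Hodge-type) decomposition: any $H^s$ diffeomorphism close to $\eta$ can be written uniquely as $e^{w}\circ \psi \circ \eta$ (or, following Ebin--Marsden, as the volume-preserving part of a nearby embedding obtained from an exponential-map chart on $\mathscr{D}^s$), where $w$ ranges over a neighborhood of $0$ in $T_e\D$ and the construction is smooth in the $H^s$ topology. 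The key point is that all the operators entering this construction --- the Riemannian exponential map of $M$, the Laplace--Hodge projection onto divergence-free fields, composition and inversion --- are \emph{natural} with respect to isometries of $M$, hence in particular commute with the flow $\varphi_t$ of the Killing field $K$.

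Concretely, I would proceed in these steps. (1) Observe that $\He$ is a subgroup of $\D$: if $\eta,\zeta$ commute with every $\varphi_t$ then so do $\eta\circ\zeta$ and $\eta^{-1}$; continuity of the group operations is inherited from $\D$, giving the topological group statement. (2) Characterize the tangent space infinitesimally: a curve $\eta(t)$ in $\D$ through $e$ lies in $\He$ iff $\varphi_s\circ\eta(t) = \eta(t)\circ\varphi_s$ for all $s,t$; differentiating in $t$ at $t=0$ shows $\dot\eta(0) = v$ must satisfy $d\varphi_s \circ v = v\circ \varphi_s$, i.e.\ $(\varphi_s)_* v = v$, which upon differentiating in $s$ is exactly $[K,v]=0$; conversely $[K,v]=0$ integrates back up. This identifies the candidate model space as $T_e\He$ from \eqref{eq_axi_tangent}, a \emph{closed} subspace of $T_e\D$ (it is the kernel of the bounded operator $v\mapsto \mathcal{L}_K v$ from $H^s$ to $H^{s-1}$, intersected with $H^s$; closedness also follows since $(\varphi_s)_*$ is a continuous one-parameter family of bounded operators on $T_e\D$ and $T_e\He$ is the intersection of their fixed-point sets), hence itself a Hilbert space and a split subspace. (3) Show the Ebin--Marsden chart respects the symmetry: since $\varphi_s$ is an isometry of $M$, it commutes with $\Exp^M$, with the Hodge projector $P$, and with composition; therefore if $\eta\in\He$ and $w\in T_e\He$, the diffeomorphism produced by the chart at $\eta$ applied to $w$ again commutes with all $\varphi_s$. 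Thus the chart maps a neighborhood of $0$ in $T_e\He$ into $\He$, and conversely any element of $\He$ near $\eta$ has chart-preimage $w$ satisfying $[K,w]=0$ (because the inverse chart is built from the same natural operations). This gives submanifold charts, and since the transition maps are restrictions of those for $\D$ they are smooth; so $\He$ is a smooth Hilbert submanifold.

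The main obstacle, as flagged in Remark \ref{rem:modulo}, is the loss of derivatives: the operator $\mathcal{L}_K = [K,\cdot\,]$ maps $H^s$ to $H^{s-1}$, so "$T_e\He = \ker \mathcal{L}_K$" must be interpreted correctly, and one must verify that this kernel is genuinely a \emph{closed, complemented} subspace of $T_e\D$ in the $H^s$ topology rather than merely a dense or non-split subspace. I would handle this by using the alternative description $T_e\He = \bigcap_s \mathrm{Fix}((\varphi_s)_*)$: each $(\varphi_s)_*$ is a bounded linear isomorphism of the Hilbert space $T_e\D$ onto itself (since $\varphi_s$ is a smooth isometry, it acts boundedly on $H^s$ sections with bounded inverse), so each fixed-point set is closed, and the intersection over the compact parameter range is closed; averaging over $s$ in a compact interval with respect to Haar measure on the flow (when $K$ generates a circle action) or a cutoff (in the $\mathbb{R}$-action case) produces a bounded projection onto $T_e\He$, exhibiting the splitting. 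Once the model space is correctly identified as a split closed subspace and the charts are seen to be compatible restrictions, the remaining verifications (group continuity, smoothness of transitions) are immediate from the corresponding facts for $\D$.
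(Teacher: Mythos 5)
Your proposal is correct in substance but organizes the argument differently from the paper. You work inside $\D$ from the start: you take the Ebin--Marsden chart for $\D$, observe that every ingredient (the Riemannian exponential $\exp^M$, the Hodge projection, $\Delta^{-1}$, composition and inversion) is natural under the isometric flow $\varphi_t$, and conclude that the chart restricts to the closed split subspace $T_e\He = \ker(\mathcal{L}_K)\cap T_e\D$. The paper instead proceeds in two stages: it first shows that the group $\mathscr{A}^s$ of \emph{all} $H^s$ diffeomorphisms commuting with $\varphi_t$ is a submanifold of $\mathscr{D}^s$, using the plain Eells chart $w \mapsto \exp^M_\eta(w\circ\eta)$ and proving the key identity $\exp^M_\eta(\mathcal{U}\cap\mathfrak{S}_\eta)=\exp^M_\eta\mathcal{U}\cap\mathscr{A}^s$ (the converse inclusion via uniqueness of geodesics within the injectivity radius); it then cuts out the volume-preserving part as the level set $F^{-1}(1)$ of the Radon--Nikodym map $F(\eta)=d(\eta^*\mu)/d\mu$, checking surjectivity of $dF_e$ onto $K$-invariant mean-zero functions by the explicit computation $[K,\nabla\Delta^{-1}f]^\flat=d\Delta^{-1}\langle\nabla f,K\rangle=0$. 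What the paper's route buys is that equivariance only ever has to be checked for the bare Riemannian exponential (where it is an identity) and for one explicit Hodge-theoretic vector field; your route instead requires equivariance of the full Ebin--Marsden chart for $\D$, whose volume-preserving correction is defined implicitly, so you must additionally invoke the uniqueness clause of the implicit function theorem to transfer equivariance of the defining equation to equivariance of its solution --- this is true but is the one step you should make explicit, along with the injectivity-radius argument behind your parenthetical ``because the inverse chart is built from the same natural operations.'' Two smaller remarks: the averaging argument for the splitting is unnecessary, since a closed subspace of a Hilbert space is automatically complemented (this is all the paper uses); and for a genuine non-periodic $\mathbb{R}$-action your cutoff average is not a projection onto the fixed set, so if you want the averaging route you should appeal to the mean ergodic theorem for the unitary group $(\varphi_t)_*$ on $H^s$ rather than a compactly supported average.
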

	\begin{proof}
	        For simplicity we will assume that $\mu(M)=1$ and $\partial M = \emptyset$.
		We will first show that the group of all $H^s$ diffeomorphisms commuting with the flow $\varphi_t$,
		which we denote by $\mathscr{A}^s$, is a smooth submanifold of the group $\mathscr{D}^s$
		of all $H^s$ diffeomorphisms of $M$.
		The proof follows a well-known construction of Eells \cite{Ee}. We will show how to define a typical
		coordinate chart for $\mathscr{A}^s$.
		
		Let $\exp^{_M}$ be the Riemannian exponential map on $M$.
		For any $\eta \in \mathscr{A}^s$ the map $V \to \exp^{_M}_\eta \circ V$
		defines a coordinate chart around $\eta$ in $\mathscr{D}^s$ when restricted to
		a sufficiently small\footnote{E.g., assume that the $H^s$ norms of vectors in $\mathcal{U}$
		are less than the injectivity radius of $M$.}
		$H^s$ neighbourhood $\mathcal{U}$ of the zero section of the pull-back bundle $\eta^\ast TM$.
		Let
		$\mathfrak{S}_{\eta} = \{ v \circ \eta : [v, K] = 0, v \in H^s(TM) \}$.
		Clearly, $\mathfrak{S}_{\eta}$ is a closed subspace of $H^s(\eta^*(TM))$. Furthermore, we have
		\begin{theorem**} \;\;
			$
			\exp^{_M}_{\eta} (\mathcal{U} \cap \mathfrak{S}_{\eta} )
			=
			\exp^{_M}_{\eta} \mathcal{U} \cap \mathscr{A}^s.
			$
		\end{theorem**}
		To prove the claim, given $\xi \in \exp^{_M}_{\eta} (\mathcal{U} \cap \mathfrak{S}_{\eta} )$
		let $w$ be an $H^s$ vector field on $M$
		with $[w, K] = 0$ such that $\xi = \exp^{_M}_\eta (w\circ\eta)$.
		Since $w$ is axisymmetric, we have
	        $w\circ\varphi_t = \varphi_{t\ast e} w$
		and using the fact that $\varphi_t$ is an isometry commuting with $\eta$, we find
		\begin{align*}
		\xi \circ \varphi_t
		&= \exp^{_M}_{\eta \circ \varphi_t}(w\circ\eta \circ \varphi_t)
		=
		\exp^{_M}_{\varphi_t \circ \eta}(w\circ\varphi_t \circ \eta)
		\\
		&= \exp^{_M}_{\varphi_t \circ \eta}(\varphi_{t\ast\eta} (w\circ\eta))
		=
		\varphi_t \circ \exp^{_M}_\eta(w\circ\eta)
		\\
		&=
		\varphi_t \circ \xi.
		\end{align*}
		This shows the inclusion
		$\exp^{_M}_\eta(\mathcal{U}\cap\mathfrak{S}_\eta)
		\subseteq
		\exp^{_M}_\eta\mathcal{U} \cap \mathscr{A}^s$.
		
		On the other hand, given any $\xi \in \exp^{_M}_{\eta} \mathcal{U} \cap \mathscr{A}^s$
		pick a vector field $w \in H^s(TM)$ such that $\xi = \exp^{_M}_\eta(w\circ\eta)$ and
		$\xi \circ \varphi_t = \varphi_t \circ \xi$.
		To show that $w$ is axisymmetric we fix $t$ and consider two geodesics
		$$
		t' \to \gamma_1(t')
		=
		\exp^{_M}_{\eta \circ \varphi_t}(t' w\circ\eta \circ \varphi_t)
		=
		\exp^{_M}_\eta(t'w\circ\eta) \circ \varphi_t
		$$
		and
		$$
		t' \to \gamma_2(t')
		=
		\exp^{_M}_{\varphi_t \circ \eta}(t'\varphi_{t\ast\eta} (w\circ\eta))
		=
		\varphi_t \circ \exp^{_M}_\eta(t'w\circ\eta).
		$$
		Clearly, $\gamma_1(0) = \gamma_2(0)$ and since $\xi$ commutes with $\varphi_t$
		we also have $\gamma_1(1) = \gamma_2(1)$.
		Since by construction we can always arrange that $\|w\|_{H^s}$ is smaller than the injectivity radius of $M$
		(observe that $w$ is in $\mathcal{U}$)
		there is only one geodesic between $\gamma_1(0)$ and $\gamma_1(1)$ which implies that
		$\gamma_1 = \gamma_2$.
		Since both geodesics have the same initial velocity, we have
		$$
		w\circ\varphi_t \circ \eta = w\circ\eta \circ \varphi_t = \varphi_{t\ast\eta}( w\circ\eta)
		$$
		from which we deduce that $w\circ\varphi_t = \varphi_{t\ast e}w$ for all $t$ so that $w$ is axisymmetric.
		This shows that
		$\exp^{_M}_\eta\mathcal{U} \cap \mathscr{A}^s
		\subseteq
		\exp^{_M}_\eta(\mathcal{U}\cap\mathfrak{S}_\eta)$
		and establishes the claim.
		
		Next, let $H_{K, 1}^{s-1}(\Lambda^n M)$ be the set of all $n$-forms of Sobolev class $H^{s-1}$
		which are invariant under $\varphi_t$ and have volume $1$
		or, equivalently,
		let
		\begin{equation*}
		H_{K,\,1}^{s-1}(M) = \Big\{ f \in H^{s-1}(M): f \circ \varphi_t = f \;\; \text{and} \;\; \int_M f \, d\mu(x) = 1 \Big\}
		\end{equation*}
		be the set of $K$-invariant $H^{s-1}$ functions of mean $1$.
		Clearly, this set is a hyperplane of the space of all $H^{s-1}$ functions invariant under $K$
		which, in turn, form a closed subspace of $H^{s-1}(M)$.
		
		Consider the map
		$F : \mathscr{A}^s \to H_{K, 1}^{s-1}(M)$
		given by the Radon-Nikodym derivative
		$\eta \to F(\eta) = d(\eta^\ast\mu)/d\mu$.
		$F$ is a smooth map between Hilbert manifolds and we can set
		$\mathscr{A}_\mu^s = F^{-1}(1)$.
		The derivative of $F$ at $\eta \in \mathscr{A}^s$ is
		$$
		dF_{\eta} : T_\eta\mathscr{A}^s \rightarrow H_{K, 0}^{s-1}(M)
		\quad \text{where} \;\;
		v \circ \eta \to d(\eta^*(\mathcal{L}_v \mu))/d\mu
		$$
		where $H_{K, 0}^{s-1}(M)$ is the space of $K$-invariant mean-zero functions on $M$
		and $\mathcal{L}_v$ is the Lie derivative in the direction of $v$.
		Since $\eta^*$ is an isomorphism of the space of smooth $n$-forms it suffices to show that
		$dF_e$ is a submersion on $\mathscr{A}_\mu^s$.
		Let $f = d\mathcal{L}_v\mu/d\mu$ with $\int_M f \, d\mu =0$ and
		using the Hodge decomposition\footnote{Cf. e.g., \cite{EbinMarsden}; Sect. 7.}
		choose a vector field $v=\nabla \Delta^{-1} f$ of Sobolev class $H^s$.
		Then\footnote{The musical symbols $\sharp$ and $\flat$ denote the usual isomorphisms between
		vector fields and differential forms induced by the Riemannian metric $g$ on $M$.}
		\begin{align} \nonumber
		[K, v]^\flat
		&=
		\mathcal{L}_K v^\flat - \mathcal{L}_K g (\cdot, v)
		\\ \label{eq:K-flat}
		&=
		d\iota_K v^\flat + \iota_K d v^\flat
		\\ \nonumber
		&=
		d\nabla_K \Delta^{-1} f
		=
		d\Delta^{-1} \langle\nabla f, K \rangle = 0
		\end{align}
		since $K$ is an infinitesimal isometry of $g$ and hence, in particular, it commutes with
		the Laplacian $\Delta$ and its inverse.
		This shows that $dF_e$ is surjective with split kernel (because $\mathscr{A}^s$ is a Hilbert manifold).
		
		That $\mathscr{A}_\mu^s$ is a topological group is an immediate consequence of the fact that
		the ambient space $\mathscr{D}^s_\mu(M)$ is a topological group whenever $s>5/2$.
		\end{proof}
	\begin{remark}
		Proposition $\ref{prop_axi_manifold}$ states that the space of diffeomorphisms that leave $K$ invariant
		is a smooth manifold modelled locally on the space of vector fields that commute with $K$.
		Note however that the latter may vary depending on the orbits of $K$.
		For example if $K = \partial_x + r\partial_y$ on $\mathbb{T}^3$ where $r$ is irrational then
		$\langle \nabla f, K \rangle = 0$ implies that $f$ is a function of the $z$ variable alone
		 while if $K = \partial_x$ then $f$ can be an arbitrary function of both variables $y$ and $z$.
		 In either case, we get a manifold structure on the space of diffeomorphisms preserving $K$
		 but without further assumptions on $K$ we cannot nice a generic description of this manifold
		(e.g., as parameterized by two functions of two variables).
	\end{remark}
	The next result shows that fluid flows with axisymmetric initial conditions remain axisymmetric.
	More precisely, we have
	\begin{thm} \label{prop_he_inv}
		If $u_0 \in T_e\mathscr{A}_\mu^s$ then the solution $u(t)$ of the Euler equations \eqref{eq:Euler}
		belongs to $T_e\mathscr{A}_\mu^s$ (as long as it is defined).
		In particular, $\mathscr{A}_\mu^s$ is a totally geodesic submanifold of $\mathscr{D}_\mu^s$.
	\end{thm}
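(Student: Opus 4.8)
The plan is to show directly that the vector field $w(t) := [K,u(t)] = \Lie_K u(t)$ vanishes identically along the solution, and then to upgrade this to the statement that the $L^2$ geodesic $\gamma(t) = \exp_e(tu_0)$ itself stays in $\mathscr{A}_\mu^s$. By Remark \ref{rem:modulo} it is enough to argue for a smooth initial velocity $u_0 \in T_e\Diff_\mu$, so that $u(t)$ and $\gamma(t)$ are $C^\infty$ on the (finite) interval $[0,T]$ of existence; the general case $u_0 \in T_e\mathscr{A}_\mu^s$ then follows by the same perturbation argument as in \cite{EMP06, MP10}.

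First I would apply $\Lie_K$ to the first equation of \eqref{eq:Euler}. Because $K$ is a Killing field, its flow preserves the Levi-Civita connection, the metric and the volume form; hence $\Lie_K$ commutes with $\partial_t$, with $\diver$ and with the gradient, and the Lie derivative of the connection vanishes, so that $\Lie_K(\nabla_u u) = \nabla_{[K,u]} u + \nabla_u [K,u]$. Together with $\Lie_K(\nabla p) = \nabla(Kp)$ this shows that $w = [K,u]$ solves the \emph{linear} initial value problem
\[
\partial_t w + \nabla_u w + \nabla_w u = -\nabla(Kp), \qquad \diver w = 0, \qquad w(0) = [K,u_0] = 0,
\]
with $w$ tangent to $\partial M$ in the boundary case (both $u$ and $K$ are). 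The one identity worth spelling out is $\Lie_K\nabla = 0$ for Killing $K$, from which the product rule for $\Lie_K(\nabla_u u)$ follows.

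Next comes a standard energy estimate: pairing the equation for $w$ with $w$ in $L^2(M)$, the term $\langle\nabla_u w, w\rangle_{L^2}$ vanishes by $\diver u = 0$ (integration by parts, using that $u$ is tangent to $\partial M$), and $\langle \nabla(Kp), w\rangle_{L^2}$ vanishes by $\diver w = 0$, leaving
\[
\frac{d}{dt}\,\|w(t)\|_{L^2}^2 = -2\,\langle \nabla_w u, w\rangle_{L^2} \;\leq\; 2\,\|\nabla u(t)\|_{L^\infty}\,\|w(t)\|_{L^2}^2 ,
\]
where $\|\nabla u\|_{L^\infty}$ denotes the supremum of the pointwise operator norm of $v\mapsto\nabla_v u$. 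Since $u$ is smooth on $[0,T]$ this factor is bounded, so Gr\"onwall's inequality together with $w(0)=0$ forces $w(t)\equiv 0$; thus $u(t)\in T_e\mathscr{A}_\mu^s$ for every $t\in[0,T]$, which is the first assertion.

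Finally, to obtain that $\mathscr{A}_\mu^s$ is totally geodesic I would observe that $w(t)\equiv 0$ is equivalent to $(\varphi_s)_\ast u(t) = u(t)$ for all $s$, so that both $\gamma(t)$ and $\varphi_s\circ\gamma(t)\circ\varphi_s^{-1}$ solve the flow equation $\dot\eta(t) = u(t)\circ\eta(t)$ with $\eta(0)=e$; uniqueness for this ODE in $\mathscr{D}_\mu^s$ then gives $\varphi_s\circ\gamma(t)\circ\varphi_s^{-1} = \gamma(t)$, i.e. $\gamma(t)\in\mathscr{A}_\mu^s$. Hence every $L^2$ geodesic issuing from $e$ tangentially to $\mathscr{A}_\mu^s$ remains in it, and by right-invariance of the $L^2$ metric together with the fact that $\mathscr{A}_\mu^s$ is a subgroup the same holds at every point, so by Proposition \ref{prop_axi_manifold} it is a totally geodesic submanifold. (Equivalently, one notes that $\mathscr{A}_\mu^s$ is precisely the fixed-point set of the family of $L^2$-isometries $\eta\mapsto\varphi_s\circ\eta\circ\varphi_s^{-1}$ of $\mathscr{D}_\mu^s$ — which are isometries because the metric is right-invariant and $\varphi_s$ is an isometry of $M$ preserving $\mu$ — and the fixed-point set of a family of isometries is automatically totally geodesic.) I do not expect a genuine obstacle here; the only point requiring care is the bookkeeping imposed by Remark \ref{rem:modulo}, namely checking that the smooth energy and ODE-uniqueness arguments descend to $u_0\in T_e\mathscr{A}_\mu^s$, which is routine and essentially identical to \cite{EMP06, MP10}.
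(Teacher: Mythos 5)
Your argument is correct, but it takes a genuinely different route from the paper. The paper's proof is a pure symmetry-plus-uniqueness argument: for each fixed $t'$ it pushes the solution forward by the isometry $\varphi_{t'}$, checks that $\tilde{u}(t)=\varphi_{t'\ast}u(t)$ again solves \eqref{eq:Euler} (with pressure $p\circ\varphi_{-t'}$) and has the same initial datum, and concludes $\tilde{u}=u$ from uniqueness of $H^s$ solutions of the nonlinear problem --- no estimates at all. You instead differentiate the equation along $K$, obtain the linear system $\partial_t w+\nabla_u w+\nabla_w u=-\nabla(Kp)$ for $w=[K,u]$ with $w(0)=0$, and kill $w$ by an $L^2$ energy estimate and Gr\"onwall; the identity $\Lie_K(\nabla_u u)=\nabla_{[K,u]}u+\nabla_u[K,u]$ that you single out is exactly the one the paper uses in the neighbouring proof of Theorem \ref{swirltransportthm}. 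What each approach buys: the paper's version works verbatim at the $H^s$ level and needs only well-posedness, while yours is self-contained (it only needs uniqueness for a linear equation, via the energy inequality) at the cost of the regularity bookkeeping you correctly flag --- $w$ lives one derivative below $u$, so either one works with smooth data and perturbs as in Remark \ref{rem:modulo}, or one checks that the pairings and the time-differentiation of $\|w\|_{L^2}^2$ are justified for $s>5/2$, which they are. Your treatment of the totally geodesic claim via uniqueness for the flow ODE is essentially what the paper leaves implicit, and the alternative observation that $\He$ is the fixed-point set of the $L^2$-isometries $\eta\mapsto\varphi_s\circ\eta\circ\varphi_s^{-1}$ is a clean additional justification.
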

	\begin{proof}
		The flow $\varphi_t$ of the Killing field $K$ on $M$ is  smooth and
		defined globally in time. The assumption $[K, u_0] = 0$ is equivalent to the fact
		that $u_0$ is constant along the flow, that is
		\begin{equation} \label{eq:u-0-constant}
		u_0 = \varphi_{t\ast\varphi_{-t}} u_0 \circ \varphi_{-t}
		\qquad \text{for all} \;\; t \in \mathbb{R}.
		\end{equation}

		For any fixed $t' \in \reals$ consider the vector field
		$$
		t \to \tilde{u}(t) = \varphi_{t'\ast\varphi_{-t'}} u(t)\circ\varphi_{-t'}.
		$$
		Since $\varphi_t$ is a flow of local isometries of $M$ the vector field $\tilde{u}(t)$ is divergence-free
		and we have
		\begin{align*}
		\partial_t\tilde{u} + \nabla_{\tilde{u}}\tilde{u}
		&=
		\varphi_{t'\ast\varphi_{-t'}} ( \partial_t{u} + \nabla_u u ) \circ \varphi_{-t'}
		\\
		&=
		\varphi_{t'\ast\varphi_{-t'}} ( -\nabla p \circ \varphi_{-t'} ) 
		\\ 
		&=
		-\nabla (p \circ \varphi_{-t'}).
		\end{align*}

		Furthermore, from \eqref{eq:u-0-constant} we also have $\tilde{u}(0)=u_0$ so that
		$\tilde{u}(t)$ and $u(t)$ must coincide by uniqueness of solutions to \eqref{eq:Euler}.
		Consequently, we find that
		$[K, u(t)]=0$, as long as $u(t)$ is defined.
		
		From Proposition \ref{prop_axi_manifold} it now follows that any $L^2$ geodesic in $\mathscr{D}^s_\mu$
		with initial velocity in $T_e\mathscr{A}_\mu^s$ remains in $\mathscr{A}_\mu^s$ which implies that
		$\mathscr{A}_\mu^s$ is totally geodesic.
	\end{proof}

	It follows that the restriction of the $L^2$ exponential map \eqref{eq:exp} to $T_e\mathscr{A}_\mu^s$
	yields a well-defined exponential map on $\mathscr{A}_\mu^s$ which we will continue to denote by $\exp_e$.
	Furthermore, its differential preserves the right-invariant distribution \eqref{eq_axi_tangent}
	in the sense that
	for any $u_0$ and $w_0$ in $T_e\mathscr{A}_\mu^s$ we have
	$d\exp_e(tu_0) tw_0 \in T_{\exp_e tu_0}\mathscr{A}_\mu^s$.

\subsection{Conservation of swirl}
\label{sec:conservation_swirl}

A general 3D axisymmetric fluid flow can display very complicated behavior.
Since vorticity is typically not conserved along particle trajectories,
the two-dimensional approach to global persistence fails unless additional conditions are imposed.
In the case of flows in $\mathbb{R}^3$ one such condition involves the swirl of the velocity field. The notion of swirl can be generalized to fluid flows in spaces of nonzero curvature.
Given an axisymmetric vector field $v$ on $M$ we define its \textit{swirl} to be the function
$\sigma_v = \langle v ,K \rangle$.
A vector field $v$ will be called \textit{swirl-free} if $\sigma_v =0$.

It turns out that, as in the Euclidean case, solutions of the Euler equations
which are initially swirl-free remain swirl-free as long as they exist.
In fact, the following more general result holds.
\begin{thm} \label{swirltransportthm}
The swirl of an axisymmetric velocity field is transported by its flow.
More precisely, if $u_0\in T_e\He$ and $\gamma(t)$ is the corresponding geodesic
in $\mathscr{A}_\mu^s$ then
$\sigma_{u(t)}\circ \gamma(t) = \sigma_{u_0}$
as long as it is defined.
\end{thm}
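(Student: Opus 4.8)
The plan is to fix a particle $x\in M$, differentiate the scalar function $t\mapsto \sigma_{u(t)}\big(\gamma(t,x)\big)$ along the trajectory, and show its derivative vanishes identically. Write $\gamma(t)=\gamma(t,x)$, so that $\dot\gamma(t)=u\big(t,\gamma(t)\big)$ by \eqref{eq:Flow}, and let $D/dt$ denote covariant differentiation along the curve $t\mapsto\gamma(t)$. Since the Levi-Civita connection is metric-compatible, I would first expand
\[
\frac{d}{dt}\big\langle u(t,\gamma(t)),K(\gamma(t))\big\rangle
= \Big\langle \tfrac{D}{dt}\big(u(t,\gamma(t))\big),\,K(\gamma(t))\Big\rangle + \Big\langle u(t,\gamma(t)),\, \tfrac{D}{dt}\big(K(\gamma(t))\big)\Big\rangle .
\]

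Next I would evaluate the two covariant derivatives. By the chain rule and the Euler equations \eqref{eq:Euler},
\[
\tfrac{D}{dt}\big(u(t,\gamma(t))\big) = \big(\partial_t u + \nabla_{\dot\gamma} u\big)(t,\gamma(t)) = \big(\partial_t u + \nabla_u u\big)(t,\gamma(t)) = -(\nabla p)(t,\gamma(t)),
\]
while, $K$ being time-independent, $\tfrac{D}{dt}\big(K(\gamma(t))\big) = (\nabla_{\dot\gamma}K)(\gamma(t)) = (\nabla_u K)(t,\gamma(t))$. Thus the derivative above equals $-\langle\nabla p,K\rangle(\gamma(t)) + \langle u,\nabla_u K\rangle(t,\gamma(t))$. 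The second term vanishes identically because $K$ is a Killing field: setting $v=w=u$ in the antisymmetry relation $\langle\nabla_w K,v\rangle+\langle w,\nabla_v K\rangle=0$ gives $2\langle u,\nabla_u K\rangle=0$.

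Everything therefore reduces to showing $\langle\nabla p,K\rangle = K(p) = \mathcal{L}_K p = 0$, i.e. that the pressure is itself axisymmetric; this is the only step requiring a separate argument, and I expect it to be the main obstacle. I would obtain it by the equivariance argument already used in the proof of Theorem \ref{prop_he_inv}: since $u_0$, and hence $u(t)$, commutes with $K$, pushing the Euler system forward by an isometry $\varphi_{t'}$ of the flow of $K$ produces a solution with the \emph{same} initial data but pressure $p\circ\varphi_{-t'}$, so by uniqueness $p\circ\varphi_{-t'}-p$ is a spatial constant; normalizing $p$ to have zero mean forces $p\circ\varphi_{t'}=p$ for all $t'$, whence $K(p)=0$. (Equivalently, take the divergence of the momentum equation: $\Delta p = -\diver(\nabla_u u)$ has $K$-invariant right-hand side, and since $K$ commutes with $\Delta^{-1}$ — exactly as exploited in \eqref{eq:K-flat} — the mean-zero solution $p$ is $K$-invariant.)

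Combining these observations gives $\dfrac{d}{dt}\,\sigma_{u(t)}\big(\gamma(t,x)\big)=0$ for every $x\in M$ and all $t$ in the interval of existence; since the value at $t=0$ is $\sigma_{u_0}(x)$, we conclude $\sigma_{u(t)}\circ\gamma(t)=\sigma_{u_0}$. Apart from the axisymmetry of the pressure, the proof is a one-line computation resting only on the Killing property of $K$, so the single genuinely non-formal point is that fact about $p$.
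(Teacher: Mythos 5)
Your proposal is correct and follows essentially the same route as the paper: differentiate $\langle u,K\rangle$ along particle trajectories, kill the $\langle u,\nabla_u K\rangle$ term by the Killing identity, and reduce everything to $\langle\nabla p,K\rangle=0$. The only (minor) divergence is in that last step: the paper applies $\mathcal{L}_K$ to the Euler equation to get $d\langle\nabla p,K\rangle=0$ and then integrates against the volume form to conclude the constant is zero, whereas you argue that $p$ itself is $K$-invariant via the uniqueness/equivariance argument (or the pressure Poisson equation); both variants rest on the same identity $\mathcal{L}_K\nabla_u u=0$ and are equally valid.
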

\begin{proof}
For simplicity, assume that $M$ is either a compact manifold with no boundary
or the Euclidean space $\mathbb{R}^3$.
Consider the function $f(t) = \langle u(t), K \rangle$.
Since $u(t)$ is a solution of the Euler equations \eqref{eq:Euler} we have
\begin{align*} \label{eq:trans}
\partial_t f + \nabla_u f
&=
\langle \partial_t u, K \rangle + \langle \nabla_uu, K \rangle + \langle u, \nabla_u K \rangle
\\ \nonumber
&=
- \langle \nabla p, K \rangle + \langle u, \nabla_u K \rangle
\end{align*}
where $p$ is the pressure of the fluid.
Since $K$ is a Killing field the second term on the right hand side is zero.

Furthermore, since $u$ is axisymmetric we have
\begin{align*}
\mathcal{L}_K \nabla_u u = \nabla_{[K, u]} u + \nabla_u [K, u] = 0
\end{align*}
and thus applying the Lie derivative in $K$ to \eqref{eq:Euler} and using the fact that
$\mathcal{L}_K g =0$, we obtain
\begin{align*}
d \langle \nabla p, K \rangle
=
\mathcal{L}_K dp
=
(\mathcal{L}_K \nabla p)^\flat
=
- \big( \partial_t \mathcal{L}_K u + \mathcal{L}_K \nabla_u u \big)^\flat
= 0.
\end{align*}
Since $M$ is connected it follows that $\langle \nabla p, K \rangle$ is a constant function
but since $K$ is divergence free (and tangent to the boundary if $\partial M \neq \emptyset$)
we also have
$\int_M \langle \nabla p, K \rangle \, d\mu =0$
which implies that the constant is zero.

Consequently, we find that $f$ satisfies a homogeneous transport equation with initial condition
$f(0)=\sigma_{u_0}$ and therefore has the form
$
f(t) = \sigma_{u_0} \circ \gamma^{-1}(t).
$
\end{proof}
%

\section{Axisymmetric diffeomorphisms with no swirl}
\label{sec:swirlfree}

In the previous section, we defined the notion of swirl for an axisymmetric vector field on $M$. It follows from this definition that the set of swirl-free vector fields is a linear subspace of the axisymmetric vector fields. It is natural to ask whether this subspace is also a Lie algebra, and if that is the case, whether there exists an underlying Lie group corresponding to it. The goal of this section is to provide a partial answer to these questions.

Let $T_e\Hel$ denote the space of all axisymmetric vector fields on $M$ with zero swirl. First, note that $T_e\Hel$ is a Lie subalgebra of $T_e\He$ if and only if the normal distribution
\begin{equation} \label{eq:K-int}
M {\setminus} \mathcal{Z} \ni x \to K^\perp_x = \big\{ V \in T_xM : \langle V, K(x) \rangle = 0 \big\}
\end{equation}
is integrable, where $\mathcal{Z}$ is the zero set of $K$. In Section \ref{sec_examples}, we provide examples illustrating the behavior of this distribution.

Under the integrability assumption we can decompose $M{\setminus}\mathcal{Z}$ into a disjoint union $\mathcal{F} = \bigcup_{x \in M{\setminus}\mathcal{Z}} \mathcal{F}_x$ of maximal leaves $\mathcal{F}_x$ which are tangent to the distribution $K^\perp$.

\begin{prop}\label{prop_flow_K}
The flow of $K$ preserves the foliation $\mathcal{F}$, that is
\begin{equation*}
\varphi_t \big( \mathcal{F}_x \big) \subseteq \mathcal{F}_{\varphi_{_t}(x)}
\end{equation*}
for any $x \in M {\setminus} \mathcal{Z}$ and any $t \geq 0$.
\end{prop}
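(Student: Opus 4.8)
The plan is to show that the flow of $K$ maps leaves of $\mathcal{F}$ into leaves of $\mathcal{F}$, exploiting the fact that $\varphi_t$ is an isometry that commutes with $K$ in the Lie-bracket sense (it is the flow \emph{of} $K$). The key observation is that the integrable distribution $K^\perp$ is $\varphi_t$-invariant: if $V \in K^\perp_x$, i.e.\ $\langle V, K(x)\rangle = 0$, then since $\varphi_t$ is an isometry and $\varphi_{t*}K = K \circ \varphi_t$ (the defining property of the flow of a vector field), we get
\begin{equation*}
\langle \varphi_{t*x}V, K(\varphi_t(x))\rangle = \langle \varphi_{t*x}V, \varphi_{t*x}K(x)\rangle = \langle V, K(x)\rangle = 0,
\end{equation*}
so $\varphi_{t*x}V \in K^\perp_{\varphi_t(x)}$. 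Thus $\varphi_{t*}(K^\perp) = K^\perp$ along orbits.

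Next I would use this to transport leaves. Fix $x \in M \setminus \mathcal{Z}$ and $t \geq 0$; note $\varphi_t(x) \in M\setminus\mathcal{Z}$ as well, since $K$ vanishes exactly on $\mathcal{Z}$ and $\varphi_{t*}K(x) = K(\varphi_t(x))$. Consider the image $\varphi_t(\mathcal{F}_x)$. Since $\mathcal{F}_x$ is an integral manifold of $K^\perp$ and $\varphi_t$ is a diffeomorphism whose differential carries $K^\perp$ to $K^\perp$ (by the first step), the image $\varphi_t(\mathcal{F}_x)$ is a connected integral manifold of $K^\perp$ passing through $\varphi_t(x)$. By the uniqueness part of the Frobenius theorem — any connected integral manifold through a point is contained in the maximal leaf through that point — we conclude $\varphi_t(\mathcal{F}_x) \subseteq \mathcal{F}_{\varphi_t(x)}$, which is the claim.

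The main technical point to handle with care is the regularity/connectedness bookkeeping in the Frobenius uniqueness step: one must check that $\mathcal{F}_x$ is connected (it is, being a maximal leaf) and that $\varphi_t$, being smooth, sends it to a connected immersed submanifold tangent to $K^\perp$, so that it genuinely lies in a single leaf rather than spreading across several. This is essentially the standard fact that leaves are the "largest" connected integral manifolds, applied to the image under an isometry. One might also remark that the inclusion is in fact an equality (apply the same argument to $\varphi_{-t}$ and $\varphi_t(x)$), though only the stated inclusion is needed. No curvature or Killing-specific computation beyond the isometry property is required; the substantive input is just that $\varphi_t$ preserves both the metric and $K$.
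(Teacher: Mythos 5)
Your proposal is correct and follows essentially the same route as the paper: both arguments reduce to the observation that $\varphi_t$ is an isometry with $\varphi_{t*}K = K\circ\varphi_t$, hence carries the distribution $K^\perp$ to itself, and then invoke the characterization of leaves (the paper via curves tangent to $K^\perp$ connecting points of a leaf, you via the image being a connected integral manifold contained in the maximal leaf). The difference is purely presentational.
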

\begin{proof}
Let $x \in M\backslash{}\mathcal{Z}$.
If $c(t')$ is a curve in $\mathcal{F}_x$ joining $x$ and some other point $y \in \mathcal{F}_x$
then $\varphi_t \circ c(t')$ is a curve joining $\varphi_t(x)$ and $\varphi_t (y)$ which is clearly orthogonal to $K$.
Thus $\varphi_t(y) \in \mathcal{F}_{\varphi_t(x)}$.
\end{proof}
An axisymmetric diffeomorphism $\eta$ of $M$ will be called \emph{swirl-free}
if it maps each leaf of $\mathcal{F}$ to itself; i.e.
$\eta(\mathcal{F}_x) \subseteq \mathcal{F}_x$ for every $x \in M {\setminus} \mathcal{Z}$.

\par To simplify the exposition we will assume from now on that the flow of $K$ is transitive on the leaves
in the sense that for any $x$ and $y$ in $M{\setminus}\mathcal{Z}$ there is a $t \in \reals$ such that
$\varphi_t(\mathcal{F}_x) = \mathcal{F}_y$. This transitivity implies that an axisymmetric diffeomorphism $\eta$ that fixes a single leaf (setwise) must also fix all the other leaves.
To see this, let $x_0 \in M{\setminus}\mathcal{Z}$ and suppose that
$\eta(\mathcal{F}_{x_0}) \subseteq \mathcal{F}_{x_0}$.
Then, given any $x \in M{\setminus}\mathcal{Z}$ we can choose $t$
with $\varphi_t(\mathcal{F}_{x_0}) = \mathcal{F}_x$ and find that
$$
\eta(\mathcal{F}_x)
=
\eta \circ \varphi_t (\mathcal{F}_{x_0})
=
\varphi_t \circ \eta ( \mathcal{F}_{x_0} )
\subseteq
\varphi_t ( \mathcal{F}_{x_0} )
=
\mathcal{F}_x.
$$

To state the main theorem in this section, we will also require that 
any sufficiently short geodesic which starts at a point $p$ of a given leaf $\mathcal{F}_x$ and is transversal to $\mathcal{F}_x$ 
does not return to $\mathcal{F}_x$ (see theorem \ref{thm_swirlfreesubgroup} below for a precise definition).

This condition is clearly satisfied in the standard Euclidean example of $M=\mathbb{R}^3$ with rotational symmetry,
i.e., $K = \partial_{\theta}$. In this case the leaves of the corresponding foliation $\mathcal{F}$
are the half-planes in $\mathbb{R}^3$ defined by $\theta = \mathrm{const.}$ (in polar coordinates)
and a geodesic starting transversally from any point on a given half-plane never returns to the same half-plane.
\begin{theorem} \label{thm_swirlfreesubgroup}
Assume that the foliation $\mathcal{F}$ is integrable and suppose that
for some $x_0 \in M{\setminus}\mathcal{Z}$ the leaf $\mathcal{F}_{x_0}$ is a submanifold of $M$
that admits an $\epsilon$-tubular neighborhood.\footnote{Recall that a submanifold $N$ of a Riemannian manifold
$M$ admits an $\epsilon$-tubular neighborhood if for any $p \in N$ and $v \in T_pM$ with $\|v\| < \epsilon$
we have $\exp_p v \in N$ if and only if $v \in T_pN$.}
Then the set of all swirl-free Sobolev $H^s$ diffeomorphisms
\begin{equation*}
\Hel = \big\{ \eta \in \He : \eta(\mathcal{F}_x) \subseteq \mathcal{F}_x, \; x \in M{\setminus}\mathcal{Z} \big\}
\end{equation*}
is a topological group and a smooth Hilbert submanifold of $\He$.
\end{theorem}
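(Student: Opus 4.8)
The plan is to mimic the construction in the proof of Proposition~\ref{prop_axi_manifold}: build a coordinate chart for $\Hel$ around each point $\eta$ by exhibiting, inside the model space $\mathfrak{S}_\eta$ for $\He$, a closed subspace $\mathfrak{T}_\eta$ such that $\exp^{_M}_\eta$ carries $\mathcal{U}\cap\mathfrak{T}_\eta$ onto $\exp^{_M}_\eta\mathcal{U}\cap\Hel$; then verify that $\Hel$ is a topological group. First I would record the infinitesimal picture: the tangent space $T_e\Hel$ should be the space of $H^s$ axisymmetric vector fields $v$ with $\langle v,K\rangle=0$, i.e.\ vector fields tangent to every leaf $\mathcal{F}_x$. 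The integrability of $K^\perp$ (which we assume) makes this a genuine linear condition defining leaves, and by the remarks preceding the theorem, combined with transitivity of the $K$-flow on leaves, a diffeomorphism preserving one leaf preserves them all; so $\eta\in\Hel$ iff $\eta(\mathcal{F}_{x_0})\subseteq\mathcal{F}_{x_0}$ for the single distinguished leaf $\mathcal{F}_{x_0}$.

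The key step is the local chart. Fix $\eta\in\Hel$ and let $\mathfrak{T}_\eta = \{\, v\circ\eta : v\in H^s(TM),\ [v,K]=0,\ v(p)\in T_p\mathcal{F}_{x_0}\ \text{for all}\ p\in\mathcal{F}_{x_0}\,\}$; because the leaf condition is $\varphi_t$-equivariant (Proposition~\ref{prop_flow_K}), this is the same as requiring $v(q)\in T_q\mathcal{F}_{\pi(q)}$ for every $q\in M\setminus\mathcal{Z}$, where $\pi$ is the leaf projection, so $\mathfrak{T}_\eta$ is the restriction-to-fiberwise-tangent-subspace of $\mathfrak{S}_\eta$ and is a closed subspace. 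For the inclusion $\exp^{_M}_\eta(\mathcal{U}\cap\mathfrak{T}_\eta)\subseteq\Hel$ one argues as in Proposition~\ref{prop_axi_manifold}: if $w$ is axisymmetric and tangent to $\mathcal{F}_{x_0}$ along $\mathcal{F}_{x_0}$, then for $p\in\mathcal{F}_{x_0}$ the curve $t'\mapsto\exp^{_M}_{\eta(p)}(t'\,w(\eta(p)))$ is a geodesic whose initial point $\eta(p)$ lies in $\mathcal{F}_{x_0}$ and whose initial velocity is tangent to $\mathcal{F}_{x_0}$; the $\epsilon$-tubular neighborhood hypothesis, together with the no-return condition, forces this geodesic to stay in $\mathcal{F}_{x_0}$ for $\|w\|$ small, so $\xi=\exp^{_M}_\eta(w\circ\eta)$ maps $\mathcal{F}_{x_0}$ into itself, hence $\xi\in\Hel$. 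Conversely, given $\xi\in\exp^{_M}_\eta\mathcal{U}\cap\Hel$ with $\xi=\exp^{_M}_\eta(w\circ\eta)$, for $p\in\mathcal{F}_{x_0}$ both $\eta(p)$ and $\xi(p)$ lie in $\mathcal{F}_{x_0}$; since $\|w\|$ is below the injectivity radius there is a unique geodesic joining them, and the $\epsilon$-tubular property says the only short geodesic between two points of $\mathcal{F}_{x_0}$ that could be achieved this way has velocity tangent to $\mathcal{F}_{x_0}$ (a transversal short geodesic would have to leave and return, contradicting the no-return hypothesis), so $w(\eta(p))\in T_{\eta(p)}\mathcal{F}_{x_0}$; equivariance under $\varphi_t$ then upgrades this to $w\in\mathfrak{T}_\eta$. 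This gives the chart, and smoothness of the transition maps is inherited from $\He$; the split closed subspace $\mathfrak{T}_\eta$ makes $\Hel$ a smooth Hilbert submanifold. Finally, $\Hel$ is closed under composition and inversion because the leaf-preserving property obviously is, and it is a topological group because $\He$ is.

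The main obstacle is the chart argument at points of the leaf where the geodesic could conceivably wander off $\mathcal{F}_{x_0}$ and come back: one needs the $\epsilon$-tubular neighborhood plus the ``short transversal geodesics do not return'' hypothesis to rule this out uniformly, and one must check that the $H^s$ smallness of $w$ can be arranged uniformly over the leaf (using compactness of $M$, or the asymptotically Euclidean structure in the noncompact case) so that pointwise $\|w(\eta(p))\|$ stays below the relevant threshold. A secondary subtlety is verifying that the fiberwise-tangency condition genuinely defines a \emph{closed} subspace of $H^s(\eta^*TM)$ of finite codimension-pattern compatible with the submanifold structure; this follows because $\pi$ and the leaf tangent bundle vary smoothly on $M\setminus\mathcal{Z}$ and, near $\mathcal{Z}$, the Killing field $K$ vanishes so the constraint degenerates in a controlled way (as in the $K=\partial_\theta$ example, where tangency to a half-plane is an honest smooth linear condition on vector fields on $\mathbb{R}^3$). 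Everything else — the group axioms, the topological-group property, and the identification $T_e\Hel$ — is then routine.
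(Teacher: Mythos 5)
Your proposal is correct and is essentially the paper's own argument: the paper's proof is a one-line citation of Ebin--Marsden, Theorem 6.1 (``with obvious modifications''), and the content of that citation is precisely the chart construction you carry out --- restricting the $\exp^{_M}_\eta$-chart of $\He$ to the closed subspace of sections tangent to the leaves, with the $\epsilon$-tubular neighborhood hypothesis supplying the equivalence $\exp_p v\in\mathcal{F}_{x_0}\Leftrightarrow v\in T_p\mathcal{F}_{x_0}$ and transitivity of the $K$-flow reducing everything to the single leaf $\mathcal{F}_{x_0}$. (Minor point: the footnote's definition already gives both directions of that equivalence at the endpoint, so no separate ``no-return'' argument is needed there.)
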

\begin{proof}
The proof follows \cite[Theorem 6.1]{EbinMarsden} with obvious modifications.
\end{proof}
\begin{cor} \label{cor:swirlfreesubgroup}
In particular, $\Hel$ is a totally geodesic submanifold $\He$ (and of $\D$).
\end{cor}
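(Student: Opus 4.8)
The plan is to combine the conservation of swirl (Theorem \ref{swirltransportthm}) with the observation that $T_e\Hel$ consists precisely of those axisymmetric vector fields that are everywhere tangent to the leaves of the foliation $\mathcal{F}$. Since the $L^2$ metric is right-invariant and $\Hel$ is a right-invariant subgroup (with right-invariant tangent distribution $T_\eta\Hel = (T_e\Hel)\circ\eta$ as in \eqref{eq_axi_tangent}), it is enough to verify the totally geodesic property at the identity: I would show that every geodesic $\gamma(t)$ of $\D$ with $\gamma(0)=e$ and $\dot\gamma(0)=u_0\in T_e\Hel$ stays in $\Hel$, and then translate on the right by an arbitrary element of $\Hel$ to obtain the general statement.

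First I would invoke Arnold's identification of $L^2$ geodesics with solutions of \eqref{eq:Euler} \cite{Ar1}: such a $\gamma(t)$ is the flow \eqref{eq:Flow} of the Euler solution $u(t)$ with $u(0)=u_0$. Because $u_0$ is axisymmetric, Theorem \ref{prop_he_inv} gives $\gamma(t)\in\He$ and $u(t)\in T_e\He$ for as long as the solution exists; and because $\sigma_{u_0}=\langle u_0,K\rangle\equiv 0$, Theorem \ref{swirltransportthm} yields $\sigma_{u(t)}\circ\gamma(t)=\sigma_{u_0}=0$, hence $\sigma_{u(t)}\equiv 0$ and $u(t)\in T_e\Hel$ for all admissible $t$. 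Equivalently, at every point $x\in M{\setminus}\mathcal{Z}$ the vector $u(t)(x)$ lies in $K^\perp_x$, so $u(t)$ is tangent to $\mathcal{F}$.

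The remaining step is to pass from ``$u(t)$ is tangent to $\mathcal{F}$'' to ``$\gamma(t)$ maps each leaf into itself''. Here I would first note that $\gamma(t)$ preserves $M{\setminus}\mathcal{Z}$: if $x\in\mathcal{Z}$ then $\varphi_s(x)=x$ for all $s$, and commutativity $\gamma(t)\circ\varphi_s=\varphi_s\circ\gamma(t)$ forces $\varphi_s(\gamma(t)(x))=\gamma(t)(x)$, whence $\gamma(t)(x)\in\mathcal{Z}$. Then, fixing $x\in M{\setminus}\mathcal{Z}$, the trajectory $t\mapsto\gamma(t)(x)$ stays in $M{\setminus}\mathcal{Z}$ and has velocity $u(t)(\gamma(t)(x))$ tangent to $\mathcal{F}$ at every instant, so by uniqueness of integral curves of a vector field tangent to an integrable distribution it remains in the maximal leaf $\mathcal{F}_x$; thus $\gamma(t)(\mathcal{F}_x)\subseteq\mathcal{F}_x$, i.e. $\gamma(t)\in\Hel$. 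Right-translating by an arbitrary $\eta\in\Hel$ then shows that any geodesic of $\D$ tangent to $\Hel$ at a point stays in $\Hel$, so $\Hel$ is totally geodesic in $\D$; since $\He$ is itself totally geodesic in $\D$ (Theorem \ref{prop_he_inv}) and $\Hel\subseteq\He$, the geodesics of $\He$ tangent to $\Hel$ are exactly the geodesics of $\D$ tangent to $\Hel$, and hence $\Hel$ is totally geodesic in $\He$ as well.

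I expect the main obstacle to be this last step, namely arguing rigorously that an integral curve of a (time-dependent, Sobolev-class) vector field which is pointwise tangent to an integrable distribution cannot leave the leaf through its initial point. On a single fixed leaf this is standard ODE existence and uniqueness, but some care is needed about the regularity of the leaves $\mathcal{F}_x$ — ensured by integrability and, near a given leaf, by the $\epsilon$-tubular neighborhood hypothesis of Theorem \ref{thm_swirlfreesubgroup} — and about their behavior as $x$ approaches the singular set $\mathcal{Z}$ of the Killing field $K$.
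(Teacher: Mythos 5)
Your argument is correct and is essentially the paper's own proof, which simply cites Theorems \ref{prop_he_inv} and \ref{swirltransportthm}; you have merely filled in the details, in particular the (standard but unstated) step that the flow of a time-dependent field tangent to the integrable distribution $K^\perp$ preserves each leaf of $\mathcal{F}$, so that vanishing swirl of $u(t)$ translates into $\gamma(t)\in\Hel$. The right-translation reduction to the identity and the passage from ``totally geodesic in $\D$'' to ``totally geodesic in $\He$'' are both handled correctly.
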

\begin{proof}
This follows directly from Theorems \ref{prop_he_inv} and \ref{swirltransportthm}.
\end{proof}
%

\subsection{The diameter of $\mathscr{A}^s_{\mu,0}$}

In \cite{ShnVol} Shnirelman proved that the group of volume-preserving diffeomorphisms of
a compact simply-connected manifold of dimension greater than two has finite $L^2$ diameter
(see also \cite{Shn94}, \cite{AK}).
On the other hand, Eliashberg and Ratiu \cite{ElRat} showed that the $L^2$ diameter of
the symplectomorphism group of any compact exact symplectic manifold (which in dimension two
coincides with the group of volume-preserving diffeomorphisms) is infinite. Their proof relies on the fact that
it is possible to construct exact symplectomorphisms of such manifolds that are fixed near the boundary
and whose Calabi invariants take on arbitrarily large values.

It turns out that their construction can be adapted to the subgroup of swirl-free diffeomorphisms.
\begin{thm}
The $L^2$ diameter of $\Hel$ is infinite.
\end{thm}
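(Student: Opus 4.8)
The plan is to adapt the Eliashberg–Ratiu construction of symplectomorphisms with large Calabi invariant to the swirl-free setting, exploiting the fact that (under the standing assumptions of the section) each leaf $\mathcal{F}_x$ of the foliation $\mathcal{F}$ carries an induced area form coming from the Riemannian volume $\mu$ and the Killing field $K$. The key observation is that a swirl-free diffeomorphism $\eta$ restricts to an area-preserving diffeomorphism of each leaf: indeed $\eta$ maps each $\mathcal{F}_x$ to itself, and since $\eta$ is volume-preserving and commutes with the flow of $K$, the induced map on the quotient (leaf space) must preserve the transverse measure, hence $\restr{\eta}{\mathcal{F}_x}$ preserves the area form $\omega_x$ on $\mathcal{F}_x$ obtained by contracting $\mu$ with $K/|K|^2$ (or the appropriate normalization). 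Thus there is a natural ``restriction to a leaf'' homomorphism $\mathscr{A}^s_{\mu,0} \to \mathrm{Symp}^s(\mathcal{F}_{x_0},\omega_{x_0})$, and in the model Euclidean case this is precisely the identification of swirl-free axisymmetric flows with 2D incompressible flows on a half-plane.

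First I would make this restriction map precise and check it is $L^2$-distance non-increasing, up to a fixed positive constant depending only on the geometry: the $L^2$ energy of an axisymmetric, swirl-free vector field $v$ on $M$ dominates (a constant times) the $L^2$ energy of its restriction $\restr{v}{\mathcal{F}_{x_0}}$ with respect to $\omega_{x_0}$, because the full integral over $M$ fibers over the leaf via the $K$-flow and the fibers have bounded length. Consequently the length of any path in $\mathscr{A}^s_{\mu,0}$ joining $e$ to $\eta$ is bounded below by a constant times the $L^2$-length in $\mathrm{Symp}(\mathcal{F}_{x_0})$ of the corresponding path joining $\mathrm{id}$ to $\restr{\eta}{\mathcal{F}_{x_0}}$; hence $\mathrm{diam}_{L^2}(\mathscr{A}^s_{\mu,0}) \geq c\,\mathrm{diam}_{L^2}(\mathrm{image})$. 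Next I would invoke Eliashberg–Ratiu: one can construct compactly-supported (away from $\partial\mathcal{F}_{x_0}$ and from $\mathcal{Z}$) symplectomorphisms $\psi_N$ of the leaf with Calabi invariant $\mathrm{Cal}(\psi_N)\to\infty$, and since the Calabi invariant is Lipschitz for the $L^2$ (Hofer-type / $L^2$) metric on such a surface, the $L^2$-distance from $\mathrm{id}$ to $\psi_N$ tends to infinity. Finally I would lift each $\psi_N$ back to a genuine element $\eta_N\in\mathscr{A}^s_{\mu,0}$ by declaring it to act as $\psi_N$ on each leaf in a $K$-equivariant way — this uses the transitivity of the $K$-flow on leaves and the tubular-neighborhood hypothesis to ensure the lift is a well-defined $H^s$ diffeomorphism of $M$ commuting with $\varphi_t$ and fixing each leaf, hence swirl-free. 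Combining, $d_{L^2}(e,\eta_N)\geq c\,d_{L^2}(\mathrm{id},\psi_N)\to\infty$, proving infinite diameter.

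The main obstacle I anticipate is the bookkeeping around the degenerate locus $\mathcal{Z}$ (the zero set of $K$) and the leaf boundary: the induced area form $\omega_x$ blows up or degenerates near $\mathcal{Z}$, and one must confine the Eliashberg–Ratiu symplectomorphisms to a compact region of a single leaf bounded away from $\mathcal{Z}$ so that the $K$-equivariant lift is smooth and $H^s$; one also needs the Calabi-type invariant to be genuinely controlled by the $L^2$ (not just Hofer) metric on that region, which is where a little care is required since the relevant comparison for the $L^2$ metric on surfaces is more delicate than for the Hofer metric (but is exactly the content of the two-dimensional results cited via \cite{ElRat}, \cite{ShnVol}). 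A secondary technical point is verifying that the correspondence between paths in $\mathscr{A}^s_{\mu,0}$ and paths of leaf-symplectomorphisms is compatible with the $H^s$ topology so that the constructed $\eta_N$ genuinely lie in the manifold $\mathscr{A}^s_{\mu,0}$ furnished by Theorem \ref{thm_swirlfreesubgroup}; this is routine given the explicit $K$-equivariant lifting formula but should be stated carefully. Once these points are handled, the argument is essentially a pullback of the two-dimensional infinite-diameter phenomenon through the leaf-restriction homomorphism.
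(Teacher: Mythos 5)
Your proposal is correct and follows essentially the same route as the paper: reduce to the two‑dimensional leaf via the $K$‑fibration, invoke the Eliashberg--Ratiu construction of symplectomorphisms with arbitrarily large Calabi invariant, and lift them $K$‑equivariantly back to swirl‑free diffeomorphisms. The only cosmetic differences are that the paper specializes to the finite cylinder in $\mathbb{R}^3$ with $K=\partial_\theta$ (so the leaf comparison is an explicit Fubini computation) and first bounds the right‑invariant $L^1$ length before passing to $L^2$ by H\"older, whereas you phrase the leaf‑restriction comparison abstractly and directly in $L^2$.
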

\begin{proof}
Consider the case when the underlying manifold is a cylinder of finite height in $\mathbb{R}^3$,
namely
$$
M = \big\{ (r, \theta, z): \, 0 \leq r, z \leq 1, \, 0 \leq \theta <2\pi \big\}
$$
where $(r, \theta, z)$ are the cylindrical coordinates and the Killing field is $K = \partial_\theta$.
In this case Theorems \ref{prop_he_inv}, \ref{thm_swirlfreesubgroup} and Corollary \ref{cor:swirlfreesubgroup}
imply the following inclusions of totally geodesic submanifolds
\begin{equation*}
\Hel \subset \He \subset \mathscr{D}^s_{\mu}.
\end{equation*}
It will be sufficient to show that the diameter of $\Hel$ computed in the right invariant metric given by
the $L^1$ norm on the algebra of vector fields is infinite. The result for the $L^2$ diameter will then follow
from H\"older's inequality.

Let $\eta(t)$ be any path connecting the identity element $e=\eta(0)$ with some $\eta_1=\eta(1)$ in $\mathscr{A}_{\mu,0}^s$.
Since the diffeomorphisms along the path are axisymmetric we have
\begin{align} \label{eq:L1_length}
\int_0^1 \int_M  \big| \dot\eta_{t}(r, \theta, z) \big| \, r drd\theta dz dt
=
2\pi \int_0^1 \int_{Q^2} \big| \dot\eta_{t}(r, z) \big| \, r dr dz dt
\end{align}
where $Q^2$ is the two-dimensional square "slice" of the cylinder, that is
$$
Q^2 = \big\{ (r, 0, z) : 0 \leq r, z \leq 1 \big\} \subset M.
$$
It follows from \eqref{eq:L1_length} that the $L^1$ length of $\eta(t)$ in $\Hel$ is bounded from below
by its $L^1$ length as a curve in $\mathcal{D}^s_{\mu}(Q^2)$.
Observe that even though $Q^2$ has corners we can work with diffeomorphisms which fix
a neighbourhood of the boundary.
Recall also that the Calabi invariant defines an epimorphism from the group of exact symplectomorphisms of $Q^2$
which are fixed near the boundary to $\mathbb{R}$, cf. e.g., \cite{AK}.

The rest of the argument proceeds now as follows.
First, given any $R > 0$ cut out an open disk $D$ in the interior of $Q^2$ and
construct a symplectomorphism $\xi_R : D \rightarrow D$ whose Calabi invariant is greater than $R$
and $\xi_R = \mathrm{id}$ near the boundary $\partial D$, as in \cite{ElRat}; Lem.~6.1.
Next, extend $\xi_R$ to the identity map on $Q^2\backslash D$
and, finally, to an axisymmetric diffeomorphism of $M$ by composing
$\rho_\theta \circ \xi_R (r, 0, z)$, where $\rho_\theta$ is a rotation by $0 \leq \theta < 2\pi$ of $\mathbb{R}^3$.

Observe that any path $\eta(t)$ in $\Hel$ starting at the identity with $\eta_1 = \rho_\theta \circ \xi_R$
will have its $L^1$ length greater than $2\pi R C$,
where $C>0$ is a constant depending only on $M$.
This shows that $\Hel$ has infinite $L^2$ diameter.
\end{proof}
\begin{remark} \label{rem:diameter}
It is interesting to observe that the $L^2$ diameter of $\Hel$ is infinite even though it is a totally geodesic submanifold of $\He$ whose diameter is finite by Shnirelman's result. Finite dimensional examples of this situation are the following.
Let $\mathbb{T}^n$ be the $n$-dimensional flat torus and $N \subseteq \mathbb{T}^n$ be any $k$-dimensional totally geodesic submanifold such that $N$ is dense in $\mathbb{T}^n$ (i.e., a higher dimensional analogue of the standard ``irrational flow'' example). Then $\mathbb{T}^n$ has finite diameter, but $N$ is isometric to $\mathbb{R}^k$ and therefore has infinite diameter.
%

On the other hand, if a compact manifold has positive sectional curvature then so does any totally geodesic submanifold
(of dimension greater than 1) and therefore it must also be compact by the Bonnet-Myers theorem.
\end{remark}
%

\section{Fredholm properties of the $L^2$ exponential map}
\label{section_fred}

It turns out that the $L^2$ exponential map in 3D hydrodynamics is Fredholm provided that
it is restricted to axisymmetric flows of sufficiently small swirl (including the swirl-free flows).
Thus, in this case the situation resembles that of 2D hydrodynamics.
We state our main results for the derivative of the exponential map because, as we have seen, in some cases
there is no guarantee that such flows are confined to a smooth submanifold of the diffeomorphism group
(cf. Sections \ref{sec:ax_diff}, \ref{sec:swirlfree}).
We emphasize that the vanishing conditions on the swirl are necessary.
For example, in \cite{EMP06} the authors show that if $u_0$ is a rigid rotation of a solid cylinder in $\mathbb{R}^3$
then the $L^2$ geodesic in the direction of $u_0$ (which lies in $\He$) contains clusters of conjugate points
along segments of finite length
- in particular, the associated exponential map cannot be Fredholm.

The next lemma will be crucial in what follows. It shows that the curl of an axisymmetric swirl-free vector field
has a very special form. We will assume that $K$ either has no zeroes or that they are of the same form as in the usual axisymmetric case: rotation along an axis in a warped product metric. The following metric seems very special, but in fact is general enough to cover all the standard three-dimensional geometries, as we shall see later.

\begin{deff}\label{rotationlike}
A Killing field $K$ on a Riemannian manifold $M$ is called a \emph{rotation} if there is a coordinate chart $(r,\theta,z)$ such that $K=\frac{\partial}{\partial \theta}$ where the metric is of the form
\begin{equation}\label{rotationmetric}
ds^2 = dr^2 + \alpha(r)^2 \, d\psi^2 + \beta(r)^2 \, d\theta^2
\end{equation}
where $\theta\in S^1$ and $\beta$ extends to a $C^{\infty}$ odd function through $r=0$ satisfying $\beta'(0)=1$, and $\beta(0)=1$.
\end{deff}

Note that the smoothness requirement on $\beta$ is a general feature of Riemannian metrics as discussed in Petersen, Chapter 1, Section 3.4~\cite{petersen}. Zeroes of Killing fields on three-dimensional manifolds must occur along geodesics by a theorem of Kobayashi~\cite{kobayashi}, but we will not consider the general situation since the family \eqref{rotationmetric} is rich enough to give
many examples while still yielding relatively simple equations.
\begin{lem} \label{lem_concentration}
Suppose $K$ is nowhere zero or a rotation. If $v \in T_e\mathscr{A}_{\mu,0}^{s+1}$ then $\mathrm{curl}\, v = \phi K$ where $\phi$ is a function of class $H^s$.
\end{lem}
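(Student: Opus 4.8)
The plan is to exploit the axisymmetry and swirl-freeness of $v$ directly at the level of differential forms. First I would note that $\curl v = \phi K$ is equivalent to the two statements $\langle \curl v, K^\perp\rangle = 0$ (i.e.\ $\curl v$ is pointwise parallel to $K$ wherever $K\neq 0$) together with the claim that the resulting proportionality factor $\phi = \langle \curl v, K\rangle/\langle K,K\rangle$ extends smoothly (in fact is $H^s$) across the zero set $\mathcal Z$ of $K$. To get the parallelism, I would use that $v$ is divergence-free and commutes with $K$, so $v^\flat$ is a $K$-invariant $1$-form, and that swirl-freeness says $\iota_K v^\flat = \langle v,K\rangle = 0$. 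Then $\iota_K\, dv^\flat = \Lie_K v^\flat - d\iota_K v^\flat = 0$, since $\Lie_K v^\flat = 0$ by axisymmetry and $\iota_K v^\flat = 0$ by swirl-freeness. But $dv^\flat$ is (up to the Hodge star and sign conventions) the vorticity two-form: $(\curl v)^\flat = \star\, dv^\flat$ in three dimensions, so $\iota_K\, dv^\flat = 0$ translates into $\star(K^\flat\wedge(\curl v)^\flat) = \pm\langle K, \curl v\rangle \,\mu^{1/3}$-type expression — more carefully, $\iota_K\, dv^\flat = 0$ means $dv^\flat = K^\flat\wedge\omega$ for some $1$-form $\omega$ on $M\setminus\mathcal Z$, and dualizing this gives $\curl v = (\text{function})\cdot K$ on $M\setminus\mathcal Z$. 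That is exactly $\phi K$ away from the axis, with $\phi$ a priori only defined and smooth off $\mathcal Z$.

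The main obstacle, then, is the behavior of $\phi$ across $\mathcal Z$: one must show $\phi$ does not blow up like $1/\beta(r)^2$ near $r=0$ but is genuinely $H^s$. This is where the hypothesis that $K$ is a \emph{rotation} in the sense of Definition \ref{rotationlike} is essential (the nowhere-zero case being immediate, since then $\phi = \langle\curl v,K\rangle/|K|^2$ is a smooth algebraic combination of $H^s$ and $C^\infty$ data). In the rotation coordinates $(r,\theta,z)$ with metric \eqref{rotationmetric}, I would compute $\curl v$ explicitly. Writing $v = a\,\partial_r + (b/\alpha^2)\,\partial_\psi + (c/\beta^2)\,\partial_\theta$ with $a,b,c$ independent of $\theta$ (axisymmetry), swirl-freeness forces $c \equiv 0$, so $v = a\,\partial_r + (b/\alpha^2)\,\partial_\psi$ lies in the $(r,\psi)$-plane. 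Then $v^\flat = a\,dr + b\,d\psi$, and $dv^\flat = (\partial_r b - \partial_\psi a)\,dr\wedge d\psi$, which is a pure $dr\wedge d\psi$ form — no $d\theta$ component. Taking the Hodge dual of $dr\wedge d\psi$ with respect to the metric $dr^2+\alpha^2 d\psi^2+\beta^2 d\theta^2$ (whose volume form is $\alpha\beta\, dr\wedge d\psi\wedge d\theta$) yields $\star(dr\wedge d\psi) = \dfrac{1}{\alpha\beta}\cdot\beta^2\, d\theta$-type expression, hence $(\curl v)^\flat$ is proportional to $d\theta$, i.e.\ $\curl v$ is proportional to $\partial_\theta/\beta^2 \sim K$; explicitly $\phi = \dfrac{\partial_r b - \partial_\psi a}{\alpha\beta}\cdot\dfrac{1}{\beta}$ up to constants. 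So I then need: (i) the divergence-free condition $\diver v = \partial_r(\alpha\beta\, a)/(\alpha\beta) + \partial_\psi(\cdots) = 0$ combined with smoothness of $v$ at $r=0$ forces the numerator $\partial_r b - \partial_\psi a$ to vanish to high enough order in $r$ to cancel the $1/\beta^2 \sim 1/r^2$ singularity; and (ii) that $v$ being a smooth (resp.\ $H^{s+1}$) \emph{vector field} near the axis — where cylindrical-type coordinates degenerate — imposes precisely the parity/vanishing conditions on $a$ and $b$ making $\phi$ of class $H^s$.

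Concretely, for step (ii) I would invoke the standard regularity-at-the-axis analysis (as in the usual axisymmetric Euler setup): a smooth axisymmetric divergence-free field near $r=0$ has a stream-function representation $a = -\frac{1}{\alpha\beta}\partial_\psi\Psi$, $b\,\alpha^2 =$ something $\partial_r\Psi$ — wait, more simply, since $v$ is divergence-free in the two-dimensional $(r,\psi)$ picture with area form $\alpha\beta\,dr\wedge d\psi$, there is a stream function $\Psi(r,\psi)$ with $\alpha\beta\, a = \partial_\psi\Psi$ and $\alpha\beta\,(b/\alpha^2)\cdot\alpha^2 = -\partial_r\Psi$, i.e.\ $b = -\partial_r\Psi/(\alpha\beta)\cdot(\alpha^2)$... and then $\partial_r b - \partial_\psi a$ becomes a second-order elliptic operator applied to $\Psi$ whose coefficients, together with the boundary behavior of $\Psi$ at $r=0$ dictated by smoothness of $v$, produce the claimed cancellation. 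Rather than grind through these identifications, I would state the key intermediate claim — \emph{for $v\in T_e\mathscr A^{s+1}_{\mu,0}$, the function $\langle\curl v,K\rangle$ vanishes to second order at $\mathcal Z$ relative to $\beta^2$}, so that $\phi=\langle\curl v,K\rangle/\beta^2$ is $H^s$ — and verify it in the model coordinates using the explicit form \eqref{rotationmetric}, the oddness of $\beta$ with $\beta'(0)=1$, and the fact that curl costs exactly one derivative so $\curl v\in H^s$. The nowhere-zero case needs none of this and is disposed of in one line. I expect the delicate point to be bookkeeping the exact order of vanishing at the axis and matching it against the odd/even parity of $\alpha$ and $\beta$; everything else is formal manipulation of $\iota_K$, $\Lie_K$, $d$, and $\star$.
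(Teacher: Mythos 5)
Your proposal follows essentially the same route as the paper: the nowhere-zero case via $\iota_K\,dv^\flat=\mathcal{L}_K v^\flat-d\iota_K v^\flat=0$ forcing $dv^\flat$ to be a multiple of the $2$-form dual to $K$, and the rotation case via explicit coordinates, a stream function for the $(r,\psi)$-planar field, and the observation that since $\alpha$ and $\beta(r)/r$ are smooth, even, and tend to $1$ at the axis, the operator producing $\phi$ reduces to the Euclidean axisymmetric one. The paper likewise does not grind through the axis bookkeeping but cites the standard Euclidean regularity result (Danchin, Majda--Bertozzi), so your deferral of that step matches the paper's own level of detail.
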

\begin{proof}
First, assume that the Killing field $K$ does not vanish.
Given any $x \in M$ choose smooth vector fields $E_1, E_2$ such that together with $K/|K|$
they define a local coordinate frame of orthonormal fields at $x$.
The axisymmetric assumption on $v$ and the fact that $K$ is Killing imply that $\mathcal{L}_K v^\flat = 0$
(cf. \eqref{eq:K-flat})
and from Cartan's formula we have
\begin{equation} \label{eq_cartan}
0 = \mathcal{L}_K v^\flat = d\iota_K v^\flat + \iota_K dv^\flat = \iota_K dv^\flat
\end{equation}
since $v$ is also swirl-free by assumption.
Let $\omega_1, \omega_2$ and $\omega_3$ be the component functions of Sobolev class $H^s$
of the associated 2-form near $x$, so that
\begin{equation*}
dv^\flat
=
\omega_3 E_1^\flat \wedge E_2^\flat
+
\omega_2 K^\flat \wedge E_1^\flat
+
\omega_1 E_2^\flat \wedge K^\flat.
\end{equation*}
Evaluating on the frame fields and using $\eqref{eq_cartan}$ we now have
\begin{gather*}
\omega_2 |K|^2
=
dv^\flat(K, E_1)
=
0
=
dv^\flat(E_2, K)
=
\omega_1 |K|^2
\end{gather*}
which implies
$dv^\flat = \omega_3 E_1^\flat \wedge E_2^\flat$
and consequently
$\mathrm{curl}\, v = (\ast dv^\flat)^\sharp = \phi K$
for some $H^s$ function $\phi$ and where $\ast$ is the Hodge star operator.
This completes the proof in the non-vanishing case.

Now if $K$ is a rotation, then we can use more concrete coordinates: we have $E_1 = \partial_r$
and $E_2 = \frac{1}{\alpha(r)} \, \partial_{\psi}$, and a divergence-free, swirl-free field $v$ can be
written locally as $$v = -\frac{1}{\alpha(r)\beta(r)} \, \frac{\partial f}{\partial \psi} \, \partial_r
+ \frac{1}{\alpha(r)\beta(r)} \, \frac{\partial f}{\partial r} \, \partial_{\psi},$$
for an $H^{s+2}$ stream function $f$, which must extend to an even function of $r$ in order for the radial component
of $v$ to vanish along the axis. Then the curl of $v$ is given by $\curl{v} = \phi K$ where
$$ 
\phi \, \frac{1}{\alpha(r)\beta(r)} \, \frac{\partial}{\partial r} \left( \alpha(r)\beta(r) \, \frac{\partial f}{\partial r} \right)
+ 
\frac{1}{\alpha(r)^2 \beta(r)^2} \, \frac{\partial^2 f}{\partial z^2}. 
$$
Since $\alpha(r)$ and $\frac{\beta(r)}{r}$ are both smooth even functions of $r$ which approach $1$ as $r\to 0$, 
we see that this differential operator behaves the same way as in the Euclidean case where $\alpha(r)=1$ 
and $\beta(r)=r$. 
In particular $\phi$ is $H^s$ as in the Euclidean case, cf. e.g., \cite{Danchin} or \cite{MajdaBertozzi}.
\end{proof}
%

An immediate consequence of the above lemma and the conservation of swirl of
Section \ref{sec:conservation_swirl} is a global existence result for swirl-free flows in any curved space.
Before formulating the result we first recall a special case of particular interest.
Consider the Euclidean space $\mathbb{R}^3$ with cylindrical coordinates $(r,\theta,z)$
and let the Killing field be $K = \partial_{\theta} + \kappa\partial_z$, where $\kappa \in \mathbb{R}$.
The case $\kappa = 0$ corresponds to the standard rotational case and it is not difficult to check that
the quantity $\omega^\theta /r$, where $\omega^\theta$ is the $\theta$-component of the vorticity field,
is constant in time.
The case $\kappa \neq 0$ is sometimes described as helicoidal symmetry
and the corresponding conserved quantity is $\omega^\theta /\sqrt{r^2 + \kappa^2}$.
In both of these cases global existence and uniqueness results are well-known,
see e.g., \cite{uy}, \cite{MajdaBertozzi}, \cite{Dutrifoy} or \cite{Danchin}.

In the general we have
\begin{theorem}
If $u_0 \in T_e\mathscr{A}_{\mu,0}^s$ then the corresponding solution $u(t)$ of
the Euler equations $\eqref{eq:Euler}$ in $M$ can be extended globally in time.
\end{theorem}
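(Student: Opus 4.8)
The plan is to reduce global existence to a time-uniform bound on $\|\curl u(t)\|_{L^\infty}$ and then invoke a Beale--Kato--Majda type continuation criterion. Let $[0,T^\ast)$ be the maximal interval of existence of the $H^s$ solution with $u(0)=u_0$ furnished by the Ebin--Marsden theory. First I would check that the solution stays in the relevant class: by Theorem \ref{prop_he_inv} it remains axisymmetric, $[K,u(t)]=0$, and by Theorem \ref{swirltransportthm}, applied to the swirl-free datum $u_0$ (so that $\sigma_{u_0}=0$), it remains swirl-free, $\langle u(t),K\rangle\equiv 0$, for every $t<T^\ast$. Hence Lemma \ref{lem_concentration} applies at each time and yields $\curl u(t)=\phi(t)\,K$ for a scalar function $\phi(t)\in H^{s-1}$, which is continuous since $s>5/2$.

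Next I would show that $\phi$ is transported along the flow. Writing Euler as $\partial_t u^\flat+\iota_u du^\flat=-d\big(p+\tfrac12|u|^2\big)$ and applying $d$ gives $\partial_t(du^\flat)+\Lie_u(du^\flat)=0$; since $u$ is divergence-free, the vorticity vector field $w=\curl u$ (characterized by $\iota_w\mu=du^\flat$) therefore satisfies $\partial_t w+[u,w]=0$. Substituting $w=\phi K$ and using $[u,K]=0$, so that $[u,\phi K]=(\grad_u\phi)\,K$, together with $\partial_t(\phi K)=(\partial_t\phi)\,K$, one obtains $\partial_t\phi+\grad_u\phi=0$ on $M{\setminus}\mathcal{Z}$, hence on all of $M$ by continuity of $\phi$. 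Consequently $\phi(t)\circ\gamma(t)=\phi_0$, and in particular $\|\phi(t)\|_{L^\infty}=\|\phi_0\|_{L^\infty}$ for all $t<T^\ast$; this is the curved-space counterpart of the classical conservation of $\omega^\theta/r$ along particle paths.

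To close the argument when $M$ is compact, note that $\sup_M|K|<\infty$, so the identity $\curl u(t)=\phi(t)K$ and the bound above give $\|\curl u(t)\|_{L^\infty}\le\|\phi_0\|_{L^\infty}\sup_M|K|$ uniformly in $t<T^\ast$; hence $\int_0^T\|\curl u(t)\|_{L^\infty}\,dt<\infty$ for every $T<T^\ast$, and together with the standard $H^s$ energy inequality for \eqref{eq:Euler} and the logarithmic Sobolev/Biot--Savart inequality on $M$ (valid on a compact Riemannian $3$-manifold by elliptic theory) this shows that $\|u(t)\|_{H^s}$ stays bounded on finite time intervals, so $T^\ast=\infty$. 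When $M=\reals^3$, where $|K|$ need not be bounded, global existence for swirl-free axisymmetric flows is the classical Ukhovskii--Yudovich/Ladyzhenskaya result, cf. \cite{uy}, \cite{MajdaBertozzi}, \cite{Danchin}, and may be quoted directly.

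The step I expect to be the main obstacle is the behaviour on the zero set $\mathcal{Z}$ of $K$. There $\phi=\curl u/K$ is formally of the indeterminate form $0/0$, and the whole scheme above --- the transport equation for $\phi$ and the resulting $L^\infty$ bound --- only makes sense globally because Lemma \ref{lem_concentration}, via the parity and smoothness of $\alpha$ and $\beta$ in the rotation case, guarantees that $\phi$ extends across the axis as a genuine function of class $H^{s-1}$. The remaining ingredients --- the vorticity bookkeeping and the continuation criterion on a manifold --- are routine.
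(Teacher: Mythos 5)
Your argument is essentially the paper's own proof: both reduce to showing that the scalar $\phi$ in $\curl u = \phi K$ (furnished by Lemma \ref{lem_concentration} after invoking conservation of swirl) is transported by the flow, and then conclude via the Beale--Kato--Majda criterion; your Lie-derivative form $\partial_t w + [u,w]=0$ of the vorticity equation is just a rewriting of the paper's computation with $\nabla_u\omega - \nabla_\omega u$. If anything, you are slightly more careful than the paper, which explicitly restricts to nonvanishing $K$ and does not comment on the unboundedness of $|K|$ in the $\mathbb{R}^3$ rotational case, whereas you address both points.
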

\begin{proof}
We assume that the Killing field $K$ does not vanish.
Since by assumption $u_0$ is swirl-free, it follows from Theorem \ref{swirltransportthm} that
the swirl $\sigma_{u(t)}=0$ as long as $u$ is defined.
But in this case Lemma \ref{lem_concentration} implies that the vorticity of $u(t)$ has the form
$\omega(t,x) = \phi(t,x) K(x)$ where $\phi=\phi(t,x)$ is a time-dependent function of class $H^{s-1}(M)$.
Therefore, using the 3D vorticity equation and the fact that $u(t)$ is axisymmetric,
we have
\begin{align*}
(\partial_t \phi + \nabla_u \phi ) K
&=
\partial_t (\phi K) + \nabla_u (\phi K) - \phi \nabla_u K
\\
&=
\partial_t \omega + \nabla_u \omega - \nabla_\omega u
\\
&= 0.
\end{align*}
This shows that the function $\phi$ is also transported along the flow of $u(t)$.
Consequently, the $L^\infty$ norm of $\omega$ is constant in time and the theorem follows now
from the Beale-Kato-Majda criterion, see e.g., \cite{MajdaBertozzi}.
\end{proof}

\subsection{The case of no swirl}

In this subsection we study Fredholm properties of the exponential map on $\He$ in the swirl-free directions.
Our first result is contained in the following theorem.
\begin{thm} \label{fredholmness}
Let $M$ be either a compact Riemannian 3-manifold or $\mathbb{R}^3$ equipped with a smooth Killing field $K$.
Let $u_0 \in T_e\mathscr{A}_{\mu,0}$ be an axisymmetric swirl-free vector field of Schwartz class.
Then the derivative
$$
{d}\exp_e(u_0) \colon T_e\He \rightarrow T_{\exp_e{u_{_0}}} \He
$$
is a Fredholm operator of index zero.
\end{thm}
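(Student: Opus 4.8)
The plan is to use the decomposition $\Phi_t = \Omega_t - \Gamma_t$ from Proposition \ref{prop:OLD} and show that, when restricted to $T_e\He$ along the swirl-free geodesic $\gamma(t)$, the operator $\Omega_t$ is invertible while $\Gamma_t$ is compact. Since $\Phi_t$ differs from $d\exp_e(tu_0)$ only by the left-translation isomorphism $dL_{\gamma(t)^{-1}}$ (and the scaling by $t$), Fredholmness of $d\exp_e(u_0)$ as a map $T_e\He \to T_{\exp_e u_0}\He$ is equivalent to Fredholmness of $\Phi_1$ on $T_e\He$. The index is zero because $\Omega_t$ is a positive self-adjoint isomorphism (an average of operators $\Ad_{\gamma_\tau^{-1}}\Ad_{\gamma_\tau^{-1}}^*$, each of which is positive-definite and bounded with bounded inverse since $\gamma_\tau$ is a smooth diffeomorphism), so $\Phi_1 = \Omega_1(\id - \Omega_1^{-1}\Gamma_1)$ is a compact perturbation of an isomorphism. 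One must also check that all these operators genuinely preserve the subspace $T_e\He$: this is where Theorem \ref{prop_he_inv} and the remark following it are used — $\Ad_{\gamma_\tau^{-1}}$ preserves axisymmetry because $\gamma_\tau$ commutes with the flow of $K$, and $\Ad^*$ does too since the $L^2$ metric is $K$-invariant.

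The heart of the matter is the compactness of $\Gamma_t$ on $T_e\He$. Unwinding \eqref{eq:Gamma}, the integrand involves $\ad^*_{\Phi_\tau w} u_0$, and the key point — exactly as in the two-dimensional argument of \cite{EMP06} — is a gain of regularity: $\ad^*_v u_0 = P(\nabla_v u_0 + (\nabla v)^T u_0)$ (with $P$ the Leray projection) would naively lose a derivative, but in the swirl-free axisymmetric setting Lemma \ref{lem_concentration} forces $\curl u_0 = \phi K$ with $\phi \in H^s$, and this structural constraint should let one rewrite $\ad^*_v u_0$ in terms of $\phi$, $v$, and $K$ in a way that is actually \emph{bounded} from $T_e\He$ (the $H^0$-closure) into itself, with the remaining $\tau$-integration against the smoothing family $\Ad_{\gamma_\tau^{-1}}\Ad^*_{\gamma_\tau^{-1}}$ and the Leray projector producing the compactness via Rellich. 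Concretely, I would first establish the pointwise identity expressing $\langle \ad^*_v u_0, w\rangle_{L^2} = \langle u_0, \ad_v w\rangle_{L^2} = \langle \phi K, \text{(first-order expression in } v,w)\rangle$ after integrating by parts, exploiting that $\curl u_0$ is a multiple of $K$ to transfer derivatives onto $K$ (which is smooth) rather than onto $v$ or $w$; this is the step I expect to be the main obstacle, since one must be careful near the zero set $\mathcal Z$ of $K$ in the rotation case — there the warped-product coordinates of Definition \ref{rotationlike} and the evenness/oddness statements from the proof of Lemma \ref{lem_concentration} are needed to verify that no singular behavior on the axis spoils the boundedness.

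Once the integrand of $\Gamma_t$ is seen to be bounded $T_e\He \to H^1 \cap T_e\He$ uniformly in $\tau \in [0,t]$ (using that $\gamma$ is a smooth curve in $\D$, so the $\Ad$ and $\Ad^*$ factors are uniformly bounded in all Sobolev norms on compact time intervals, together with smoothness of $u_0$), the map $w \mapsto \Gamma_t w$ factors through the compact inclusion $H^1(M) \hookrightarrow L^2(M)$ and is therefore compact on $T_e\He$. Combined with invertibility of $\Omega_t$ this gives that $\Phi_t$, hence $d\exp_e(u_0)$, is Fredholm of index zero on $T_e\He$. Finally, to pass from the Schwartz-class (equivalently $C^\infty$) hypothesis on $u_0$ to the stated setting it suffices to invoke Remark \ref{rem:modulo}: the general case follows by the standard perturbation argument of \cite{EMP06}, \cite{MP10}, using that the set of Fredholm operators of a fixed index is open and that $d\exp_e(u_0)$ depends continuously on $u_0$.
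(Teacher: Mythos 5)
Your proposal follows essentially the same route as the paper: the decomposition $\Phi_t=\Omega_t-\Gamma_t$, invertibility of $\Omega_t$ on $T_e\He$ (after checking that $\Ad$ and $\Ad^*$ preserve axisymmetry), and compactness of $\Gamma_t$ via Lemma \ref{lem_concentration} together with the fact that $[v,K]=0$ prevents any derivative from landing on $v$. The step you flag as the main obstacle is in fact immediate from Lemma \ref{lem:Ads}: $\ad^*_v u_0=\curl\Delta^{-1}[v,\curl u_0]=\curl\Delta^{-1}\bigl(d\phi(v)\,K\bigr)$, which is bounded $H^s\to H^{s+1}$ and hence compact by Rellich (with compactly supported $\phi$ plus potential-theory estimates and an approximation argument handling the $\mathbb{R}^3$ case).
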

\begin{remark}
If the assumptions of Section \ref{sec:swirlfree} are satisfied then
(with some extra work needed to relax the smoothness assumption in order to allow
$u_0 \in T_e\mathscr{A}^s_{\mu,0}$ for any $s>5/2$, cf. Remark \ref{rem:modulo} above)
it follows from the above result and Theorem \ref{thm_swirlfreesubgroup} that the $L^2$ exponential map in \eqref{eq:exp}
when restricted to the subgroup $\mathscr{A}_{\mu,0}^s \subset \mathscr{A}_\mu^s$
of swirl-free diffeomorphisms
$$
\exp_e : T_e\mathscr{A}^s_{\mu,0} \to \mathscr{A}^s_{\mu,0}
$$
is a nonlinear Fredholm map of index zero.
\end{remark}
\begin{proof}[Proof of Theorem \ref{fredholmness}]
As before, we first assume that $M$ is compact and the Killing field $K$ does not vanish.
To further simplify our calculations we will also assume that the first homology group of $M$ is zero.\footnote{In this case
the group of volume-preserving diffeomorphisms $\mathscr{D}_\mu^s$ coincides with the group of
exact volumorphisms $\mathscr{D}_{\mu,ex}^s$ of $M$.}
Since on a three-dimensional manifold a 2-form can be identified with a 1-form using Hodge duality,
it follows that any $v \in T_e\mathscr{D}_\mu^s$ can be now written as
\begin{equation} \label{eq:curls}
v
=
( \delta d\beta )^\sharp
=
( \delta\ast w^\flat )^\sharp
=
\mathrm{curl}\, w
\end{equation}
for some 1-form $\beta$ of Sobolev class $H^{s+2}$ and $w^\flat = \ast d\beta$.
\begin{lem} \label{lem:Ads}
The Lie group adjoint and coadjoint operators on the volumorphism group $\mathscr{D}_\mu^s$
of a 3-manifold
are given by the formulas
\begin{align*}
\mathrm{Ad}_\eta v
=
\eta_\ast v
=
\mathrm{curl} ( \eta_\ast w^\flat )^\sharp
\quad \text{and} \quad
\mathrm{Ad}_\eta^\ast v
=
\mathrm{curl} \, \Delta^{-1} \eta^{-1}_\ast \Delta w
\end{align*}
where $v \in T_e\mathscr{D}_\mu^s$ and $w \in T_e\mathscr{D}_\mu^{s+1}$ are related by \eqref{eq:curls}.
The corresponding Lie algebra coadjoint operator is given by
$$
\mathrm{ad}^\ast_v u
=
\mathrm{curl}\, \Delta^{-1} [v, \mathrm{curl}\, u]
$$
for any $u, v \in T_e\mathscr{D}_\mu^s$.
\end{lem}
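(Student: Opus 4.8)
The plan is to derive the three formulas in sequence, each from the previous one, using the fact that on a simply-connected 3-manifold every divergence-free field is a curl, together with the standard relation between the Lie group (co)adjoint of $\Diff_\mu$ and pushforward/pullback of forms. First I would recall that for $\eta \in \D$ and $v \in T_e\D$ the group adjoint is just the pushforward, $\Ad_\eta v = \eta_* v = (\eta^{-1})^* v$; writing $v = \curl w = (\delta \ast w^\flat)^\sharp$ and using that $\eta_*$ commutes with $d$, $\delta$ need not commute with $\eta_*$, so one does not simply push $w$ through. Instead I would observe that the 2-form $dv^\flat = \ast\,(\curl v)^\flat$ pushes forward cleanly, and since $v^\flat = \ast w^\flat$ in the sense $w^\flat = \ast d\beta$, $v = \curl w$, one gets $\Ad_\eta v = \curl\big((\eta_* w^\flat)^\sharp\big)$ by transporting the relevant 2-form $d\beta$ (or rather $\ast d\beta = w^\flat$) along $\eta$ and taking curl; here the key point is that taking curl of the pushforward reproduces the pushforward of the original divergence-free field, because $\eta$ preserves $\mu$ and hence commutes with the Hodge star up to the orientation-preserving volume normalization. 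This is essentially the content of \eqref{eq:curls} applied to $\eta_* v$.

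Next I would compute $\Ad_\eta^\ast$ from its defining relation $\langle \Ad_\eta^\ast v, w\rangle_{L^2} = \langle v, \Ad_\eta w\rangle_{L^2}$. The idea is to pass from the $L^2$ inner product on vector fields to an inner product involving $\Delta$ via the identity $\langle \curl a, \curl b\rangle_{L^2} = \langle a, \Delta b \rangle_{L^2}$ (valid on exact fields after integrating by parts, using $\curl\curl = -\Delta + \grad\diver = -\Delta$ on divergence-free fields up to sign conventions, which I would fix carefully). Writing $v = \curl w$, one has for any test field $z = \curl \zeta$ that $\langle \Ad_\eta^\ast \curl w, \curl\zeta\rangle = \langle \curl w, \Ad_\eta \curl\zeta\rangle = \langle \curl w, \curl((\eta_*\zeta^\flat)^\sharp)\rangle = \langle w, \Delta(\eta_*\zeta^\flat)^\sharp\rangle$, and then moving $\eta_*$ to the other side as $\eta^{-1}_* = \eta^*$ (which is an $L^2$-isometry since $\eta$ is volume-preserving) gives $\langle \eta^{-1}_*\Delta w, \zeta\rangle$-type expressions; unwinding shows $\Ad_\eta^\ast v = \curl\,\Delta^{-1}\eta^{-1}_*\Delta w$. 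The bookkeeping with $\Delta^{-1}$ is legitimate because everything lives in the exact/co-exact part where $\Delta$ is invertible (here I would invoke the vanishing first homology assumption from the proof of Theorem \ref{fredholmness}).

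Finally, for the Lie algebra coadjoint $\ad^\ast_v u$ I would differentiate the group formula: $\ad^\ast_v u = \frac{d}{dt}\big|_{t=0} \Ad^\ast_{\eta_t} u$ where $\eta_t$ is the flow of $v$. Using $\frac{d}{dt}\big|_{0}\eta^{-1}_{t*} = \mathcal{L}_v = [v,\cdot]$ on vector fields and $\ad_v w = -[v,w]$ (or $+[v,w]$ depending on convention — I would pin this down), together with the relation $u = \curl(\text{something})$ and $\curl u$ playing the role of the "$\Delta w$" in the group formula, one obtains $\ad^\ast_v u = \curl\,\Delta^{-1}[v, \curl u]$. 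The main obstacle I anticipate is not any single computation but keeping all the sign and duality conventions consistent — in particular the precise relation between $\curl$, $\ast$, $\delta$, $d$ on 1-forms vs.\ 2-forms in dimension three, the sign in $\curl\curl = -\Delta$ on divergence-free fields, and the direction of pushforward in $\Ad_\eta = \eta_*$ versus the pullback appearing when one dualizes — so that the three displayed formulas come out exactly as stated rather than with stray signs. A secondary technical point is justifying the use of $\Delta^{-1}$ (restricting to the appropriate Hodge summand and checking the regularity, i.e.\ that $w \in T_e\D^{s+1}$ is the right domain for $\Ad_\eta^\ast$ to land back in $T_e\D^s$), which I would handle by the same elliptic regularity remarks already used in Section \ref{sec:FJ}.
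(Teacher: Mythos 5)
Your plan is essentially correct, and it reconstructs the argument that the paper itself does not spell out: the proof given in the text is simply a citation to \cite{MP10} (Prop.~2.5 and Prop.~3.6), where exactly this chain — group adjoint via transport of the vector potential, group coadjoint via $L^2$ duality, algebra coadjoint by differentiating along the flow — is carried out. Two of your justifying phrases are imprecise, though the computations they support still go through: a volumorphism does \emph{not} commute with the Hodge star on $1$-forms (nor is $\eta^{-1}_*$ an $L^2$-isometry on vector fields) unless $\eta$ is an isometry; what is actually used is that $\eta_*$ commutes with $d$ and with $\iota_{(\cdot)}\mu$, so that the $2$-form $dw^\flat=\iota_v\mu$ transports to $\iota_{\eta_*v}\mu$, and that the metric-free pairing $\int_M \alpha(X)\,d\mu$ is preserved under simultaneous pushforward, which is what lets you move $\eta_*$ across the $L^2$ inner product once one side is written as a $1$-form acting on a vector field. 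With those two points stated correctly, your outline yields the three displayed formulas as claimed.
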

\begin{proof}
Both formulas can be found in \cite{MP10}; Prop. 2.5, Prop. 3.6.
\end{proof}

Let $\gamma(t)$ be the $L^2$ geodesic starting from the identity $e$ in the direction $u_0$.
Consider the family of bounded operators $\Phi_t$ on $T_e\mathscr{D}^s_\mu$ defined in \eqref{eq:Phi}
with the attendant decomposition
in \eqref{eq:solop} of Proposition \ref{prop:OLD}.
From Section \ref{sec:ax_diff} we know that $d\exp_e(tu_0)$ preserves the tangent spaces to $\He$
so that each $\Phi_t$ is well-defined as a bounded operator on $T_e\He$.

Regarding the first term in \eqref{eq:solop} we have
\begin{lem} \label{lem_inv}
The operators $\Omega_t$ are invertible on $T_e\He$.
\end{lem}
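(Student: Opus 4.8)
The plan is to show that each $\Omega_t$ is a strictly positive self-adjoint bounded operator on $T_e\He$ with a bounded inverse, by exploiting the structure $\Omega_t = \int_0^t \mathrm{Ad}_{\gamma_\tau^{-1}}\mathrm{Ad}_{\gamma_\tau^{-1}}^\ast \, d\tau$ from \eqref{eq:Omega}. The integrand at each fixed $\tau$ is of the form $A_\tau A_\tau^\ast$ where $A_\tau = \mathrm{Ad}_{\gamma_\tau^{-1}}$, hence it is self-adjoint and nonnegative with respect to the $L^2$ inner product. Since $\Omega_t$ is an integral of such operators it is itself self-adjoint and nonnegative on $T_e\He$; the real work is the lower bound.

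First I would establish the \emph{pointwise lower bound}: for each fixed $\tau$ and each $w\in T_e\He$,
\begin{equation*}
\langle \mathrm{Ad}_{\gamma_\tau^{-1}}\mathrm{Ad}_{\gamma_\tau^{-1}}^\ast w, w\rangle_{L^2}
=
\|\mathrm{Ad}_{\gamma_\tau^{-1}}^\ast w\|_{L^2}^2
\geq
c(\tau)\,\|w\|_{L^2}^2
\end{equation*}
for some continuous positive function $c(\tau)$. This follows because $\mathrm{Ad}_{\gamma_\tau^{-1}}$ is an \emph{invertible} bounded operator on $T_e\He$ — indeed $\mathrm{Ad}_{\gamma_\tau^{-1}}^{-1}=\mathrm{Ad}_{\gamma_\tau}$ with operator norm controlled by the (finite, since $\gamma$ is smooth on a compact time interval) geometric quantities of the flow $\gamma$ — so $\mathrm{Ad}_{\gamma_\tau^{-1}}^\ast = (\mathrm{Ad}_{\gamma_\tau^{-1}}^{-1})^\ast{}^{-1}$ is bounded below by $\|\mathrm{Ad}_{\gamma_\tau}\|^{-1}$. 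One must check here that the $L^2$-orthogonal projection onto $T_e\He$ commutes appropriately with these operators, or more simply that $\mathrm{Ad}_{\gamma_\tau^{-1}}$ genuinely maps $T_e\He$ to itself with bounded inverse there — this is exactly the content of the fact from Section \ref{sec:ax_diff} that $d\exp_e(tu_0)$ and hence conjugation by $\gamma_t$ preserves the axisymmetric distribution, so the restriction is legitimate.

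Then I would integrate: since $c(\tau)$ is continuous and positive on $[0,t]$, we get $\langle \Omega_t w, w\rangle_{L^2} \geq \big(\int_0^t c(\tau)\,d\tau\big)\|w\|_{L^2}^2 = C_t\,\|w\|_{L^2}^2$ with $C_t>0$ for $t>0$. A bounded self-adjoint operator on a Hilbert space bounded below by a positive multiple of the identity is invertible, with $\|\Omega_t^{-1}\|\leq C_t^{-1}$. One subtlety worth a sentence: $\Omega_t$ is an operator on the $H^s$ space $T_e\He$ but the lower bound is in the $L^2$ norm; here I would note that the formula defines $\Omega_t$ as a bounded operator on $H^s$ (this is part of Proposition \ref{prop:OLD}) and that $\mathrm{Ad}_{\gamma_\tau^{-1}}$ preserves the $H^s$ structure because $\gamma_\tau$ is a smooth diffeomorphism, so the inverse constructed via the $L^2$ bound is automatically $H^s$-bounded — the argument is identical to the corresponding step in \cite{EMP06} and \cite{MP10} for the non-symmetric case, and the axisymmetric restriction changes nothing. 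The \textbf{main obstacle} is really just bookkeeping: making sure every operator in sight is checked to restrict to $T_e\He$ with all bounds uniform in $\tau$ over the compact interval, and that nothing forces a loss of derivatives; the positivity itself is soft.
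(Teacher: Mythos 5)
Your coercivity computation is sound as far as it goes: $\langle \Omega_t w,w\rangle_{L^2}=\int_0^t\|\mathrm{Ad}_{\gamma_\tau^{-1}}^{*}w\|_{L^2}^2\,d\tau \geq C_t\|w\|_{L^2}^2$ with $C_t>0$ is exactly the mechanism behind \cite{EMP06}, Prop.~7. But the proposal has a genuine gap at the one point where this lemma differs from the ambient case, namely the claim that everything "restricts legitimately" to $T_e\He$. The fact from Section \ref{sec:ax_diff} that $d\exp_e(tu_0)$ preserves the axisymmetric distribution tells you that the \emph{adjoint} $\mathrm{Ad}_{\gamma_\tau^{-1}}$ maps $T_e\He$ to itself; it says nothing about the \emph{coadjoint} $\mathrm{Ad}_{\gamma_\tau^{-1}}^{*}$. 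For a bounded operator $A$ on a Hilbert space, $A(V)\subseteq V$ implies $A^{*}(V^{\perp})\subseteq V^{\perp}$, not $A^{*}(V)\subseteq V$, so invariance of $T_e\He$ under the integrand $\mathrm{Ad}_{\gamma_\tau^{-1}}\mathrm{Ad}_{\gamma_\tau^{-1}}^{*}$ — without which $\Omega_t$ is not even a well-defined operator on $T_e\He$, since only the combination $\Phi_t=\Omega_t-\Gamma_t$ is known a priori to preserve that subspace — requires a separate argument. This is where the paper's proof spends almost all of its effort: using the formula $\mathrm{Ad}_{\eta}^{*}v=\mathrm{curl}\,\Delta^{-1}\eta^{-1}_{*}\Delta w$ of Lemma \ref{lem:Ads}, together with $\eta^{-1}_{*}K=K$ and the fact that $\mathcal{L}_K$ commutes with $\mathrm{curl}$ and $\Delta^{-1}$ because $K$ is Killing, it computes $[K,\mathrm{Ad}_{\eta^{-1}}^{*}v]=0$ directly. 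Your parenthetical about the orthogonal projection "commuting appropriately" is the right instinct, but it is flagged as something to check rather than checked, and the check is the substance of the lemma.

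A secondary weak point is the assertion that the inverse produced by the $L^2$ lower bound is "automatically" $H^s$-bounded. Coercivity in the weak $L^2$ norm plus $H^s$-boundedness gives injectivity on $T_e\He$ and invertibility on its $L^2$ completion, but not surjectivity in $H^s$; in \cite{EMP06} the $H^s$ surjectivity comes from exhibiting $\Omega_t$ as an $H^s$-invertible operator plus a compact one and then combining Fredholmness with the $L^2$ injectivity. Deferring to \cite{EMP06} and \cite{MP10} for this is legitimate (the paper does the same), but then one must again verify that the relevant decomposition respects the subspace $T_e\He$ — which circles back to the coadjoint-invariance computation above. In short: the positivity is, as you say, soft; the bookkeeping you deferred is the proof.
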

\begin{proof}[Proof of Lemma $\ref{lem_inv}$]
The proof that the operators $\Omega_t$ in \eqref{eq:Omega} are invertible on $T_e\mathscr{D}^s_\mu$
can be found in \cite{EMP06}, Prop. 7.
In particular, each $\Omega_t$ is bounded when restricted to the subspace $T_e\He$.

Observe that if $\eta\in \mathscr{A}^{s+1}_\mu$ then the product
$\Ad_{\eta^{-1}}\Ad_{\eta^{-1}}^{*}$
of the adjoint operator and its $L^2$ coadjoint maps $T_e\mathscr{A}^s$ to itself.
In fact, if $v \in T_e\mathscr{A}_\mu^s$ then
$w^\flat = \ast \Delta^{-1} dv^\flat$
and from Lemma \ref{lem:Ads} it follows at once that both factors map into $H^s$ and that
$\mathrm{div} (\mathrm{Ad}_{\eta^{-1}}^\ast v ) = \mathrm{div} ( \mathrm{Ad}_{\eta^{-1}} v ) = 0$.
Furthermore, we have
\begin{align} \label{eq:1}
[ K, \mathrm{Ad}_{\eta^{-1}}^\ast v ]
&=
\mathcal{L}_K \mathrm{Ad}_{\eta^{-1}}^\ast v
=
\mathcal{L}_K \mathrm{curl}\, \Delta^{-1} \eta^{-1}_\ast \Delta w
\\ \nonumber
&=
\mathrm{curl}\, \Delta^{-1} \mathcal{L}_K \eta^{-1}_\ast \Delta w
=
\mathrm{curl}\, \Delta^{-1} \eta^{-1}_\ast \Delta \mathcal{L}_K w
=
0
\end{align}
since $\eta$ being axisymmetric implies $\eta^{-1}_\ast K=K$ and the Lie derivative is natural
with respect to the push-forward map.
The last step in \eqref{eq:1} follows again from the fact that $K$ is an infinitesimal isometry so that
\begin{align*}
(\mathcal{L}_K w)^\flat
=
\mathcal{L}_K w^\flat
=
\ast \Delta^{-1} d \mathcal{L}_K v^\flat
=
0
\end{align*}
by the axisymmetry assumption on $v$.
Therefore $\mathrm{Ad}_{\eta^{-1}}^\ast v$ lies in the subspace $T_e\mathscr{A}_\mu^s$.
Similarly, we have
\begin{align*}
[K, \mathrm{Ad}_{\eta^{-1}} v ]
=
\mathcal{L}_K \eta^{-1}_\ast v
=
\eta^{-1}_\ast \mathcal{L}_K v
= 0
\end{align*}
and hence $\mathrm{Ad}_{\eta^{-1}} v$ is also in $T_e\mathscr{A}_\mu^s$.

Since $u_0 \in T_e\mathscr{A}_{\mu,0}$ the corresponding $L^2$ geodesic $\gamma(t)$ is smooth.
It follows that each operator
$
\Omega_t
=
\int_{0}^{t}\mathrm{Ad}_{\gamma_\tau^{-1}}\mathrm{Ad}_{\gamma_\tau^{-1}}^{*} d\tau
$
is a continuous linear bijection of $T_e\He$ to itself and hence an isomorphism by Banach's theorem.
\end{proof}

Our key observation concerning the operators $\Gamma_t$ in \eqref{eq:Gamma} is contained
in the following lemma.
\begin{lem} \label{compactness_lemma}
The restriction of the algebra coadjoint
$v \to \mathrm{ad}^\ast_v u_0$ to $T_e\He$ is a compact operator.
\end{lem}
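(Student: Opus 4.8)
The plan is to use the explicit formula $\mathrm{ad}^\ast_v u_0 = \mathrm{curl}\,\Delta^{-1}[v,\mathrm{curl}\,u_0]$ from Lemma \ref{lem:Ads}, together with the structural fact from Lemma \ref{lem_concentration} that for swirl-free axisymmetric fields the curl points along $K$. Since $u_0 \in T_e\mathscr{A}_{\mu,0}$, we have $\mathrm{curl}\,u_0 = \phi_0 K$ for a fixed smooth function $\phi_0$. The strategy is to show that, modulo smoothing operators (which are compact), the map $v \mapsto [v,\phi_0 K]$ loses no derivatives on the subspace $T_e\He$, i.e.\ that the apparent first-order term in the Lie bracket is actually killed by the swirl-free constraint. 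Combined with the two-derivative gain from $\Delta^{-1}$ and the one-derivative loss from the outer $\mathrm{curl}$, this yields a net gain of one derivative, hence compactness by the Rellich lemma.

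The key steps, in order:
\begin{itemize}
\item[(i)] Reduce via $\mathrm{ad}^\ast_v u_0 = \mathrm{curl}\,\Delta^{-1}[v,\phi_0 K]$ and record the mapping properties of $\Delta^{-1}$ (gains two derivatives, using $H^1(M)=0$ so the Hodge decomposition is clean) and $\mathrm{curl}$ (loses one).
\item[(ii)] Expand $[v,\phi_0 K] = \phi_0 [v,K] - (\mathcal{L}_{\phi_0 K} v) + (\text{terms with } v(\phi_0)K \text{ or lower order})$; more precisely use $[v,\phi_0 K] = \phi_0[v,K] + (v\phi_0)K$. The first summand vanishes because $v$ is axisymmetric ($[v,K]=0$), leaving $[v,\phi_0 K] = (v\cdot\nabla\phi_0)\,K$, a \emph{zeroth-order} operator in $v$.
\item[(iii)] Conclude that $v \mapsto [v,\phi_0 K]$ maps $T_e\He \cap H^s$ boundedly into $H^s$ (no loss), so $v\mapsto \Delta^{-1}[v,\phi_0 K]$ maps into $H^{s+2}$ and $v \mapsto \mathrm{curl}\,\Delta^{-1}[v,\phi_0 K]$ maps $H^s$ into $H^{s+1}$.
\item[(iv)] Since the inclusion $H^{s+1}(M)\hookrightarrow H^s(M)$ is compact on the compact manifold $M$, and since (by the computations in Lemma \ref{lem_inv}) the image in fact lies back in $T_e\He$, the operator $v\mapsto\mathrm{ad}^\ast_v u_0$ is compact on $T_e\He$.
\item[(v)] Handle the rotation (vanishing-$K$) case separately: near the axis use the warped-product coordinates of Definition \ref{rotationlike}, where $\mathrm{curl}\,u_0 = \phi_0 K$ still holds by Lemma \ref{lem_concentration}, and check that the local expression $(v\cdot\nabla\phi_0)K$ remains $H^s$-bounded with the factor $\beta(r)$ absorbing the apparent singularity exactly as in the Euclidean axisymmetric case (cf.\ \cite{Danchin}, \cite{MajdaBertozzi}).
\end{itemize}

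The main obstacle is step (ii)--(iii): one must be careful that $v(\phi_0)$ really is the only surviving term and that no hidden derivative of $v$ reappears through the Hodge-theoretic definition of $\Delta^{-1}$ or through the need to re-project onto divergence-free fields (the outer $\mathrm{curl}$ automatically lands in $T_e\mathscr{D}^s_\mu$, but one should confirm no correction term is needed). A secondary subtlety is the $K$-vanishing case, where $\phi_0 K$ is smooth but the orthonormal frame $E_1,E_2$ degenerates at the axis; here the verification that the weighted differential operator behaves as in the flat case is exactly the content already invoked at the end of the proof of Lemma \ref{lem_concentration}, so it can be cited rather than redone. Once the zeroth-order reduction in (ii) is in hand, the rest is a routine application of elliptic regularity and Rellich's theorem.
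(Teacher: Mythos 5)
Your proposal is correct and follows essentially the same route as the paper: write $\mathrm{curl}\,u_0=\phi K$ via Lemma \ref{lem_concentration}, use Lemma \ref{lem:Ads} to get $\mathrm{ad}^\ast_v u_0=\mathrm{curl}\,\Delta^{-1}[v,\phi K]$, observe that axisymmetry kills $\phi[v,K]$ so the bracket reduces to the zeroth-order term $d\phi(v)K$, and conclude by Rellich--Kondrashov from the net one-derivative gain. Your extra bookkeeping in (iii)--(v) (the explicit derivative count and the degenerate-axis case) is a correct elaboration of what the paper leaves implicit.
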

\begin{proof}
Since $u_0 \in T_e\mathscr{A}_{\mu,0}$ is swirl free, it follows from Lemma \ref{lem_concentration} that
$\mathrm{curl}\, u_0 = \phi K$ for some function $\phi$ of Schwartz class.
Using Lemma \ref{lem:Ads} we then have
\begin{equation} \label{KK}
v \mapsto \mathrm{ad}^\ast_v u_0
=
\mathrm{curl} \,\Delta^{-1} [v, \mathrm{curl}\,u_0 ]
=
\mathrm{curl} \,\Delta^{-1} \big( d\phi(v) K \big)
\end{equation}
since $v$ is an axisymmetric vector field.
The result follows directly from the usual Rellich-Kondrashov lemma.
\end{proof}

As a consequence of Lemma \ref{compactness_lemma} and Proposition \ref{prop:OLD}
we find that each operator
$
\Gamma_t (\cdot)
=
\int_0^t
\mathrm{Ad}_{\gamma_\tau^{-1}} \mathrm{Ad}_{\gamma_\tau^{-1}}^\ast \mathrm{ad}_{\Phi_\tau (\cdot)}^\ast u_0 \, d\tau
$
is compact as an integral (in $t$) of a product of bounded linear operators and a compact operator.
Combined with Lemma \ref{lem_inv} and \eqref{eq:solop} this implies that
each $\Phi_t$ decomposes on $T_e\mathscr{A}_{\mu}^s$ into a sum of an invertible and a compact operators
and is therefore a bounded Fredholm operator whose index equals
$\mathrm{ind}\, \Phi_t = \mathrm{ind} \, \Phi_0 = 0$
by a standard perturbation argument.
\begin{remark}
If the first homology $\mathrm{H}^1(M) \neq 0$ then the expression for $\mathrm{ad}^\ast_v u_0$
would differ from that in \eqref{KK} by an operator of finite rank. Therefore the Lie algebra coadjoint
would also be compact in that case and the rest of the preceding argument could be adjusted accordingly.
\end{remark}

We next consider the case $M = \mathbb{R}^3$ with rotational symmetry, that is, $K=\partial_\theta$,
and to simplify the argument we first assume that $u_0$ has compact support in $\mathbb{R}^3$.
The formulae of Lemma \ref{lem:Ads} which rely on the Hodge decomposition (which also holds in $\mathbb{R}^3$)
remain unchanged and so does Lemma \ref{lem_inv}.
We focus therefore on compactness of the coadjoint operator in $H^s(\mathbb{R}^3)$.
As before, we may disregard the finite rank terms and therefore we only need to show that
the operator in \eqref{KK} maps bounded sets to relatively compact sets.

From \eqref{KK} using Plancherel's inequality, standard potential theory estimates
and the fact that $\phi$ has compact support we have
\begin{align*} \nonumber
\big\| \mathrm{curl}\, \Delta^{-1} \big( d\phi (v) K \big) \big\|_{H^s(\mathbb{R}^3)}
&\lesssim
\| d\phi (v) K \|_{L^{6/5}(\mathbb{R}^3)}
+
\| d\phi (v) K \|_{\dot{H}^{s-1}(\mathbb{R}^3)}
\\ \label{eq:ad-op}
&\lesssim
\| d\phi (v) K \|_{L^2(\mathbb{R}^3)}
+
\| d\phi (v) K \|_{\dot{H}^{s-1}(\mathbb{R}^3)}
\\ \nonumber
&\simeq
\| d\phi (v) \|_{H^{s-1}(\mathbb{R}^3)}
\end{align*}
where the (suppressed) constants possibly depend on $\phi$.
The above estimate implies that the coadjoint $\mathrm{ad}^\ast u_0$ is bounded as an operator to $H^{s-1}$
and so the fact that it is compact follows once again from the Rellich-Kondrashov lemma.

To dispose of the more general case when $u_0$ is of Schwartz class one can use a method of
approximating in $H^s$ by functions with compact support.
This completes the proof of Theorem \ref{fredholmness}.
\end{proof}
\begin{remark}
In Section $\ref{sec:ex}$ we will derive a more explicit expression
for the coadjoint operator based on a decomposition of axisymmetric vector fields
into gradient and swirl components (cf. Proposition $\ref{axisymmetricvectorfieldclassification}$).
It can also be used to give an alternative proof of Lemma \ref{compactness_lemma}.
\end{remark}
%
%
%
%

\subsection{A perturbation result}

Using a straightforward argument it is possible to extend Theorem \ref{fredholmness}
to more general axisymmetric flows provided that they have suitably small swirl.
\begin{thm} \label{fredholmness2}
Let $u_0$ be a compactly supported, smooth, axisymmetric and swirl-free vector field on $M$.
For any $v \in T_e\mathscr{A}_\mu^s$ which is sufficiently close to $u_0$ in the $H^s$ norm
the derivative
$$
d\exp_e(v) : T_e\mathscr{A}^s \rightarrow T_{\exp_e(u_0)}\mathscr{A}^s
$$
is a Fredholm operator of index zero.
\end{thm}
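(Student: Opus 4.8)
The plan is to treat this as a perturbation of the statement already proved in Theorem \ref{fredholmness} and to exploit the fact that Fredholm operators of a fixed index form an open set in the operator-norm topology, stable under small bounded perturbations. First I would note that by Theorem \ref{fredholmness} the operator $\Phi_1^{u_0} := dL_{\gamma^{u_0}(1)^{-1}} d\exp_e(u_0)$, associated to the geodesic $\gamma^{u_0}$ emanating from $e$ in the direction $u_0$, is a bounded Fredholm operator of index zero on $T_e\mathscr{A}_\mu^s$; here I am using the decomposition $\Phi_t = \Omega_t - \Gamma_t$ of Proposition \ref{prop:OLD}, with $\Omega_1$ invertible (Lemma \ref{lem_inv}) and $\Gamma_1$ compact (Lemma \ref{compactness_lemma} together with the integral representation following it). The goal is to show that for $v$ close to $u_0$ in $H^s$, the corresponding operator $\Phi_1^{v}$ differs from $\Phi_1^{u_0}$ by a bounded operator of small norm, whence $\Phi_1^v$ is itself Fredholm of index zero, and therefore so is $d\exp_e(v)$ (the factor $dL_{\gamma^v(1)}$ being a continuous linear isomorphism on the relevant tangent spaces once one passes to the smooth setting, cf. Remark \ref{rem:modulo}).

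The key steps, in order, are as follows. \emph{Step 1: continuous dependence of the geodesic.} By the Ebin--Marsden theory, the $L^2$ geodesic $\gamma^v(t)$ and its velocity depend continuously (indeed smoothly) on the initial data $v \in T_e\mathscr{A}_\mu^s$, uniformly on the compact time interval $[0,1]$, as long as $\gamma^{u_0}$ exists on $[0,1]$ — which it does, being a smooth geodesic. Hence $\gamma^v(\tau) \to \gamma^{u_0}(\tau)$ in $\mathscr{D}_\mu^{s}$ and $\Ad_{\gamma^v_\tau{}^{-1}}$, $\Ad^*_{\gamma^v_\tau{}^{-1}}$ converge to their $u_0$-counterparts in operator norm, uniformly in $\tau$. \emph{Step 2: continuity of $\Omega_1$ and its inverse.} It follows that $\Omega_1^v \to \Omega_1^{u_0}$ in operator norm; since invertibility is open in operator norm, $\Omega_1^v$ is invertible for $v$ near $u_0$. \emph{Step 3: the coadjoint term.} The operator $w \mapsto \mathrm{ad}^*_w v$ depends on $v$ through $\mathrm{curl}\, v$, linearly, via $\mathrm{ad}^*_w v = \mathrm{curl}\,\Delta^{-1}[w,\mathrm{curl}\, v]$ (Lemma \ref{lem:Ads}); hence $\|\mathrm{ad}^*_{(\cdot)}v - \mathrm{ad}^*_{(\cdot)}u_0\|$ is controlled by $\|\mathrm{curl}(v-u_0)\|_{H^{s-1}} \lesssim \|v-u_0\|_{H^s}$, and is therefore small. \emph{Step 4: assembling $\Gamma_1^v$.} Writing $\Gamma_1^v$ as the time-integral of $\Ad_{\gamma^v_\tau{}^{-1}}\Ad^*_{\gamma^v_\tau{}^{-1}}\mathrm{ad}^*_{\Phi_\tau^v(\cdot)}v$, and noting that $\Phi_\tau^v$ is itself obtained by solving the bounded linear integral (fixed-point) equation \eqref{eq:solop}--\eqref{eq:Gamma} whose coefficients are close to those for $u_0$, one gets $\|\Phi_\tau^v - \Phi_\tau^{u_0}\| \to 0$ uniformly in $\tau$ by a Gronwall-type estimate, and consequently $\Gamma_1^v \to \Gamma_1^{u_0}$ in operator norm. \emph{Step 5: conclusion.} Then $\Phi_1^v = \Omega_1^v - \Gamma_1^v$ is a small bounded perturbation of the Fredholm operator $\Phi_1^{u_0}$, hence Fredholm of index zero; passing through $dL_{\gamma^v(1)}$ gives the claim for $d\exp_e(v)$.

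One subtlety worth flagging: the perturbed field $v$ is only assumed axisymmetric, not swirl-free, so $\mathrm{curl}\, v$ need not be a multiple of $K$ and one no longer has the clean concentration statement of Lemma \ref{lem_concentration}. This is exactly why one cannot rerun the direct argument of Theorem \ref{fredholmness} for $v$: the operator $w\mapsto\mathrm{ad}^*_w v$ need not be compact. But it is bounded $T_e\mathscr{A}_\mu^s \to T_e\mathscr{A}_\mu^s$ (indeed it gains a derivative through $\Delta^{-1}$ and the commutator only costs one, and axisymmetry keeps everything inside $T_e\mathscr{A}_\mu^s$ by the computations in the proof of Lemma \ref{lem_inv}), and — crucially — it is \emph{close} in operator norm to the compact operator $w\mapsto\mathrm{ad}^*_w u_0$. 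That is all the perturbation argument needs.

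\textbf{Main obstacle.} The technically most delicate point is Step 4: establishing that the solution operator $\Phi_\tau^v$ of the nonautonomous integral equation \eqref{eq:solop}--\eqref{eq:Gamma} depends continuously in operator norm on the data, uniformly on $[0,1]$. This requires that the fixed-point iteration defining $\Phi_\tau$ be a contraction with constants uniform in $v$ near $u_0$, which in turn rests on the uniform boundedness of $\Ad_{\gamma^v_\tau{}^{-1}}\Ad^*_{\gamma^v_\tau{}^{-1}}$ and of $\mathrm{ad}^*_{(\cdot)}v$ over $\tau\in[0,1]$ and $v$ in an $H^s$-ball; granting the Ebin--Marsden smooth dependence (Step 1) this is a routine, if slightly lengthy, Gronwall estimate, entirely analogous to the corresponding arguments in \cite{EMP06} and \cite{MP10}, and so I would present it briefly rather than in full detail.
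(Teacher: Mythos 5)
Your overall strategy --- perturb off the swirl-free case and invoke the openness of the set of Fredholm operators of index zero --- is exactly the paper's, but the implementation has a genuine gap. You propose to run the decomposition $\Phi_t^v=\Omega_t^v-\Gamma_t^v$ for the perturbed data $v$ and show each piece converges in operator norm. However, Proposition \ref{prop:OLD} only provides that decomposition when the initial velocity lies in $T_e\mathscr{D}_\mu^{s+1}$, and the theorem allows $v$ merely of class $H^s$. For such $v$ the geodesic $\gamma^v(\tau)$ is only an $H^s$ curve of diffeomorphisms, so $dL_{\gamma^v(1)^{-1}}$ and the individual factors $\mathrm{Ad}_{\gamma^v_\tau{}^{-1}}$, $\mathrm{Ad}^*_{\gamma^v_\tau{}^{-1}}$ lose a derivative and are not bounded on $H^s$; likewise $w\mapsto \mathrm{ad}^*_w v=\mathrm{curl}\,\Delta^{-1}[w,\mathrm{curl}\,v]$ maps $H^s$ only into $H^{s-1}$ when $v\in H^s$ (the commutator costs a derivative of $\mathrm{curl}\,v\in H^{s-1}$, and $\mathrm{curl}\,\Delta^{-1}$ recovers only one), so your claim in Step 3 that it is bounded on $H^s$, and the operator-norm estimate built on it, do not hold. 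This is precisely the loss-of-derivatives issue flagged in Remark \ref{rem:modulo}: only the full composite $\Phi_t^v$ is bounded on $H^s$, by cancellation, and your Steps 2--4 cannot be executed separately as stated. A density argument does not rescue this, since Fredholmness is not preserved under limits without the very operator-norm convergence you are trying to establish.

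The paper's fix is to replace left translation by $L^2$ parallel translation: along any geodesic $\gamma_v$ in $\mathscr{A}_\mu^s$ one has the parallel transport operator $\tau^{\gamma_v}_{t}$, defined by solving $X'=0$ for the Levi-Civita connection of the $L^2$ metric, which is a smooth operation in the $H^s$ topology with no derivative loss. The map $v\mapsto \tau^{\gamma_v}_{-1}\circ d\exp_e(v)$ is then smooth as a composition of smooth maps, and a mean-value estimate gives
\begin{equation*}
\big\| \tau^{\gamma_{u_0}}_{-1}\circ d\exp_e(u_0)-\tau^{\gamma_v}_{-1}\circ d\exp_e(v)\big\|_{L(H^s)}\lesssim \|v-u_0\|_{H^s},
\end{equation*}
after which Kato's stability theorem for Fredholm operators concludes. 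If you substitute this conjugation for your $dL_{\gamma^v(1)^{-1}}$, your entire Gronwall machinery in Steps 1--4 becomes unnecessary: the quantitative closeness follows in one line from smoothness of the exponential map and of parallel transport, and the decomposition $\Omega_t-\Gamma_t$ is needed only once, for the smooth base point $u_0$, where Theorem \ref{fredholmness} already applies.
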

\begin{proof}
Along any geodesic $\gamma(t)$ in $\mathscr{D}_\mu^s$ there is an operator $\tau_t^\gamma$
of parallel translation with respect to the $L^2$ metric \eqref{eq:L2}.
As in finite dimensions, it is defined as the solution operator of the Cauchy problem
for the corresponding first-order ordinary differential equation in $\mathscr{D}_\mu^s$
$$
X'(t) = 0, \quad X(0) \in T_e\mathscr{D}_\mu^s
$$
where $'$ is the Levi-Civita covariant derivative of the $L^2$ metric along $\gamma$.
Since the Levi-Civita derivative restricted to the submanifold $\mathscr{A}_\mu^s$ is smooth,
it follows that the operator $\tau_t^\gamma$ is a linear isomorphism of the tangent spaces
to $\mathscr{A}_\mu^s$ as well as an isometry of the metric \eqref{eq:L2}.
We denote its inverse by $\tau_{-t}^\gamma$.

Let $\gamma_{u_0}$ and $\gamma_v$ be the $L^2$ geodesics in $\mathscr{A}_\mu^s$
starting from the identity in the direction $u_0$ and $v$, respectively.
Assume that $\gamma_v(t)$ is defined at least for $0 \leq t \leq 1$.
Consider the map
$v \to \tau^{\gamma_v}_{-1} \circ d\exp_e(v)$
and observe that it is smooth in the $H^s$ topology as a composition of two smooth maps.
The latter follows from the $C^\infty$ regularity of the $L^2$ exponential map on $\mathscr{A}_\mu^s$
and the smooth dependence on parameters of the parallel translation operator
along geodesics in $\mathscr{A}_\mu^s$.
Consequently, we can find a $\delta >0$ such that whenever $\|v\|_{H^s} < \delta$ then
\begin{equation*}
\big\| \tau^{\gamma_{u_0}}_{-1} \circ d\exp_e(u_0) - \tau^{\gamma_v}_{-1} \circ d\exp_e(v) \big\|_{L(H^s)}
\lesssim
\|v - u_0\|_{H^s}
\end{equation*}
by a mean-value estimate.
The theorem now follows from stability results for Fredholm operators,
see e.g., \cite{Kato}; Thm. 5.16.
\end{proof}
\begin{remark}[$L^2$ sectional curvature]
In \cite{PrWash} the authors studied an axisymmetric flow in the solid torus $D^2 \times \mathbb{S}^1$
with initial condition of the form $u_0(r, z) = f(r)\partial_{\theta}$, where $f$ is a smooth function of the radial variable.
They showed that the sectional curvatures of $\mathscr{A}^s_\mu$ were strictly positive
on the planes spanned by $u$ and $v$ if and only if $\frac{d}{dr}(rf^2) > 0$,
where $u$ is the solution of the Euler equations \eqref{eq:Euler} with data $u_0$
and $v$ is any axisymmetric vector field on the torus.
Furthermore, if $f(2f+rf') > 0$ then the $L^2$ geodesic in $\mathscr{A}_\mu^s$
corresponding to $u_0$ contains clusters of conjugate points and, in particular, the $L^2$ exponential map
is not Fredholm.
Observe that their results do not contradict Theorems \ref{fredholmness} and \ref{fredholmness2} since
the solution of \eqref{eq:Euler} corresponding to $u_0$ above necessarily develops a large swirl
in finite time.

We may view this as a more precise version of the general result from Ebin-Marsden~\cite{EbinMarsden} that the exponential map is
locally a diffeomorphism for small velocity fields, which follows automatically from the smoothness of the geodesic equation.
Here the velocity field does not need to be small in $H^s$, but rather its swirl just needs to be small in $H^{s-1}$
so that $v$ is close to a swirl-free field $u_0$.

On the other hand, choosing a swirl-free initial data of the form $u_0(r,z) = f(r)\partial_z$,
where $f$ is any smooth function of the radial variable, we find that
the corresponding Eulerian solution $u$ satisfies $\nabla_u u = 0$. In this case the Gauss-Codazzi equations
of submanifold geometry (see e.g., \cite{Mi92} or \cite{MP10}) applied to the $L^2$ sectional curvatures of
$\mathscr{A}_\mu^s$ and $\mathscr{D}^s$ yield
\begin{align} \nonumber
\left\langle \mathcal{R}(u, v)v, u \right\rangle_{L^2}
&=
\left\langle \bar{\mathcal{R}}(u, v)v, u \right\rangle_{L^2}
+
\left\langle Q_{e} \nabla_uu,~ Q_{e} \nabla_vv \right\rangle_{L^2}
-
\left\langle Q_{e} \nabla_uv,~ Q_{e} \nabla_uv \right\rangle_{L^2}
\\
&=
-
\left\langle Q_{e} \nabla_uv,~ Q_{e} \nabla_uv \right\rangle_{L^2}
\leq 0
\end{align}
for any axisymmetric vector field $v$.
Here $Q_e = \nabla \Delta^{-1} \mathrm{div}$ is the Hodge projection onto the gradient fields
and $\mathcal{R}$, $\bar{\mathcal{R}}$ are the Riemann curvature tensors of the $L^2$ metric
on $\mathscr{D}_\mu^s$, $\mathscr{D}^s$, respectively,
and in the second line we used the fact that the $L^2$ curvature of the full diffeomorphism group
of $D^2\times\mathbb{S}^1$ is zero.
That the $L^2$ sectional curvatures along this flow are all non-positive implies that the corresponding geodesic
in the totally geodesic submanifold $\mathscr{A}_\mu^s$ is free of conjugate points.
In particular, in this case the $L^2$ exponential map is Fredholm.
\end{remark}
%

\subsection{Exponential map and coadjoint orbits}

It is well known that the geometry of coadjoint orbits of the group of volume-preserving diffeomorphisms
of a 3-manifold is considerably more complicated than that of a 2-manifold.
Nevertheless an elegant geometric description of the orbits was found by Arnold \cite{Ar11}
in terms of isovorticity of vector fields.\footnote{In our setting, we say that two vector fields
$v_1, v_2$ are \textit{isovortical} if there is an axisymmetric diffeomorphism $\eta$
such that the circulation of $v_1$ around any closed contour $c$ in $M$
and the circulation of $v_2$ around $\eta \circ c$ are equal.}
In particular, if two axisymmetric vector fields $v$ and $\tilde v$ are isovortical
then the corresponding vorticity fields satisfy
\begin{equation} \label{eq:isovort}
\mathrm{curl}\, v = \eta_\ast \mathrm{curl}\, \tilde v
\end{equation}
for some axisymmetric diffeomorphism $\eta$.
Observe that, in general, vector fields that are isovortical to a swirl-free vector field need not be themselves swirl-free.
A simple example is given by the vector fields
$$
v = \partial_z
\quad \text{and} \quad
\tilde v = r^{-2} \partial_\theta
$$
on a finite (hollow) cylinder with periodic conditions
$$
M = \big\{ (r,\theta, z): 0 < a \leq r \leq b <\infty, \, 0 \leq \theta < 2\pi, \, -\pi \leq z < \pi \big\}
$$
in $\mathbb{R}^3$ with cylindrical coordinates and rotational symmetry.

As a direct consequence we have
\begin{thm} \label{fred-coadjoint}
Let $M$ be a compact Riemannian 3-manifold with a smooth Killing field $K$.
If $u_0$ and $\tilde{u}_0$ are related as in \eqref{eq:isovort} with $u_0 \in T_e\mathscr{A}_{\mu, 0}$
and some $\eta \in \mathscr{A}_{\mu}$, then the derivative
$$
d\exp_e(\tilde{u}_0) \colon T_e\mathscr{A}_\mu^s \rightarrow T_{\exp_e{\tilde{u}_{_0}}} \mathscr{A}_\mu^s
$$
is a Fredholm operator of index zero.
\end{thm}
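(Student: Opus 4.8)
The plan is to transport the Fredholm property of $d\exp_e(u_0)$, established in Theorem \ref{fredholmness}, from the swirl-free initial data $u_0$ to its isovortical partner $\tilde u_0$ by exploiting the diffeomorphism $\eta \in \mathscr{A}_\mu$ relating their vorticities. The key structural input is Lemma \ref{lem:Ads}: the Lie algebra coadjoint operator has the form $\mathrm{ad}^\ast_v u = \mathrm{curl}\,\Delta^{-1}[v,\mathrm{curl}\,u]$, so it depends on $u$ only through $\mathrm{curl}\,u$. Since $\mathrm{curl}\,\tilde u_0 = \eta^{-1}_\ast \mathrm{curl}\,u_0$ (rewriting \eqref{eq:isovort}), the coadjoint operator $v \mapsto \mathrm{ad}^\ast_v \tilde u_0$ is obtained from $v \mapsto \mathrm{ad}^\ast_v u_0$ by conjugating the curl of the "frozen" field through $\eta$. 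First I would record that $\eta$ being an axisymmetric volumorphism means $\eta^{-1}_\ast$ maps $T_e\mathscr{A}_\mu^s$ to itself and is bounded with bounded inverse, and that it commutes with $\mathcal{L}_K$ as in \eqref{eq:1}.

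Next I would revisit the decomposition $\Phi_t = \Omega_t - \Gamma_t$ of Proposition \ref{prop:OLD} along the geodesic $\tilde\gamma(t)$ emanating from $e$ in the direction $\tilde u_0$. The operators $\Omega_t$ in \eqref{eq:Omega} do not involve the initial velocity at all, only the geodesic $\tilde\gamma$ and the adjoint/coadjoint pairs, so the proof of Lemma \ref{lem_inv} applies verbatim (using that $\tilde\gamma(t)$ stays in $\mathscr{A}_\mu^s$, which follows from Theorem \ref{prop_he_inv}, and that the $\mathrm{Ad}$--$\mathrm{Ad}^\ast$ products preserve $T_e\mathscr{A}_\mu^s$). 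The substantive point is compactness of $v \mapsto \mathrm{ad}^\ast_v \tilde u_0$ on $T_e\mathscr{A}_\mu^s$, the analogue of Lemma \ref{compactness_lemma}. Here one writes
\[
\mathrm{ad}^\ast_v \tilde u_0
=
\mathrm{curl}\,\Delta^{-1}[v, \eta^{-1}_\ast(\phi K)]
\]
with $\mathrm{curl}\,u_0 = \phi K$ of Schwartz class from Lemma \ref{lem_concentration}. Expanding the bracket, $[v,\eta^{-1}_\ast(\phi K)] = [v, (\phi\circ\eta^{-1})\,\eta^{-1}_\ast K] = [v, (\phi\circ\eta^{-1})K]$ since $\eta^{-1}_\ast K = K$; this equals $d(\phi\circ\eta^{-1})(v)\,K + (\phi\circ\eta^{-1})[v,K] = d(\phi\circ\eta^{-1})(v)\,K$ because $v$ is axisymmetric. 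Thus
\[
\mathrm{ad}^\ast_v \tilde u_0
=
\mathrm{curl}\,\Delta^{-1}\big( d(\phi\circ\eta^{-1})(v)\,K \big),
\]
which is exactly the shape of \eqref{KK} with $\phi$ replaced by $\phi\circ\eta^{-1}$, an $H^s$ function (loss of at most the Sobolev regularity of $\eta$, which is fine since the Rellich--Kondrashov gain is unaffected). Hence the coadjoint is compact, $\Gamma_t$ is compact as in the swirl-free case, and $\Phi_t = \Omega_t - \Gamma_t$ is Fredholm of index $\mathrm{ind}\,\Phi_0 = 0$ for every $t$; taking $t=1$ and undoing the left-translation in \eqref{eq:Phi} gives the claim for $d\exp_e(\tilde u_0)$.

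The main obstacle I anticipate is regularity bookkeeping for $\phi\circ\eta^{-1}$: since $\eta \in \mathscr{A}_\mu^s$ with $s>5/2$ is only of Sobolev class, the composition $\phi\circ\eta^{-1}$ need not be smooth, and one must check that $d(\phi\circ\eta^{-1})(v)K$ still lands in $H^{s-1}$ so that $\mathrm{curl}\,\Delta^{-1}$ of it is bounded into $H^s$ and compact into itself — this uses that $\phi$ is smooth (Schwartz) and the standard $H^s$-composition/multiplication lemmas for $s>n/2$. A secondary subtlety, already flagged in the $\mathrm{H}^1(M)\neq 0$ remark after Lemma \ref{compactness_lemma}, is that if $M$ has nontrivial first homology the coadjoint picks up a finite-rank correction; as there, this changes nothing for Fredholmness. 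Finally, relaxing the smoothness of $u_0$ to $u_0 \in T_e\mathscr{A}^s_{\mu,0}$ (if desired) proceeds by the perturbation argument of Remark \ref{rem:modulo}, exactly as in \cite{EMP06}, \cite{MP10}.
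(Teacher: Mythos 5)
Your proposal is correct and follows essentially the same route as the paper: reduce to compactness of $v \mapsto \mathrm{ad}^\ast_v \tilde u_0$, use the isovorticity relation together with $\eta_\ast K = K$ and $[v,K]=0$ to rewrite the bracket as $d(\phi\circ\eta^{-1})(v)\,K$, and conclude as in Theorem \ref{fredholmness}. The only differences are cosmetic (which direction of $\eta$ you push forward, and your extra caution about the regularity of $\phi\circ\eta^{-1}$, which is moot here since the theorem takes $\eta\in\mathscr{A}_\mu$ smooth).
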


\noindent Note that here we assume $u_0$ is smooth (cf Remark \ref{rem:modulo}).

\begin{proof}
It is sufficient to show compactness of the algebra coadjoint operator at $\tilde{u}_0$.
Proceeding as in the proof of Lemma \ref{compactness_lemma} we have
\begin{align*}
v \to \mathrm{ad}^\ast_v \tilde{u}_0
&=
\mathrm{curl}\, \Delta^{-1} [ v, \mathrm{curl}\, \tilde{u}_0 ] + \text{finite rank}
\\
&=
\mathrm{curl}\, \Delta^{-1} [ v, \eta_\ast\mathrm{curl}\, u_0 ] + \text{finite rank}
\end{align*}
where using Lemma \ref{lem_concentration} we find
\begin{align*}
[ v, \eta_\ast u_0 ]
=
[ v, \phi\circ\eta^{-1} \eta_\ast K ]
=
d(\phi\circ\eta^{-1}) (v)
\end{align*}
since $\eta_\ast K = K$ and $[v,K] =0$.
The rest of the proof follows now as in the case of Theorem \ref{fredholmness}.
\end{proof}

In general coadjoint orbits for the three-dimensional volumorphism group are difficult to compute and understand. 
In two dimensions the basics of the theory are laid out in Arnold-Khesin~\cite{AK}. The analysis of the coadjoint 
orbits and their relationship to the space of steady solutions of the Euler equations is laid out in detail in 
Choffrut-{\v S}ver\'ak~\cite{CS}, and the complete classification of orbits in the two-dimensional situation with
arbitrary topology was completed in Izosimov-Khesin-Mousavi~\cite{IKM}. We can compute the coadjoint orbits fairly 
explicitly in simple axisymmetric cases; we will illustrate this here and leave the details and generalization for 
a future paper. 
\begin{remark} 
Consider the Heisenberg group $H\cong \mathbb{R}^3$ which fibers over the flat Euclidean plane $\mathbb{R}^2$; 
we describe it in detail later in Theorem \ref{fibrationthm}. Its geometry is determined by declaring 
the vector fields 
$e_1 = \partial_{\theta}$, 
$e_2 = \partial_x + \tfrac{1}{2} y\,\partial_{\theta}$ 
and 
$e_3 = \partial_y - \tfrac{1}{2} x \, \partial_{\theta}$ 
to be orthonormal. 
With $K=e_1$, axisymmetric volumorphisms $\Psi\colon \mathbb{R}^3\to\mathbb{R}^3$ have the form 
$$ 
\Psi(\theta,x,y) = \big( \theta+\gamma(x,y), \Phi(x,y) \big), 
\quad 
\Phi(x,y) = \big( \eta(x,y),\xi(x,y) \big), 
\quad 
\eta_x \xi_y - \eta_y\xi_x \equiv 1; 
$$ 
note that $\Phi$ is an area-preserving diffeomorphism of $\mathbb{R}^2$. 
Given an axisymmetric field $u_0\in T_e\mathscr{A}_\mu$, the coadjoint orbits are the sets 
$$ 
\mathscr{C}_{u_0} 
:= 
\big\{ u \in T_e\mathscr{A}_\mu :  \exists \Psi\in \mathscr{A}_{\mu} \text{ s.t. } \Psi^*du^{\flat} = du_0^{\flat} \big\}. 
$$ 
Writing $u_0 = g_0(x,y) e_1 - \partial_yf_0(x,y) e_2 + \partial_xf_0(x,y) e_3$ 
for some functions $f_0,g_0\colon\mathbb{R}^2\to\mathbb{R}$ which decay to zero at infinity, 
we get the simple condition for the angular component, namely 
\begin{equation} \label{gcondition} 
g\circ\Phi = g_0, 
\end{equation} 
together with the rather more complicated condition for the skew-gradient component, that is 
\begin{equation} \label{fcondition} 
\Delta f\circ \Phi + \tfrac{1}{2} \eta \{g_0, \xi\} - \tfrac{1}{2} \xi \{g_0, \eta\} + \{g_0, \gamma\} 
= 
\Delta f_0 + \tfrac{1}{2} x \partial_x g_0 + \tfrac{1}{2} y\partial_y g_0. 
\end{equation} 
The condition \eqref{gcondition} is well-understood from the usual two-dimensional coadjoint orbit analysis 
as in \cite{AK,CS,IKM}. The simplest case is when $g_0$ is a decreasing radial function with a global maximum 
at the origin, such as $g_0(r) = e^{-r^2}$. We can obtain an area-preserving $\Phi$ for a given $g$ 
satisfying \eqref{gcondition} if and only if $g$ satisfies 
\begin{equation} \label{gsolution} 
\text{Area} \big( \{(x,y) : g(x,y)>c\} \big) 
= 
\text{Area}\big(\{ (x,y) : g_0(x,y)>c\}\big) 
\quad 
\text{for all $c\in\mathbb{R}$}. 
\end{equation} 
Given such a $g$, the function $\Phi$ satisfying \eqref{gcondition} is uniquely determined up to precomposition 
with an area-preserving diffeomorphism of $\mathbb{R}^2$ that preserves all circles centered at the origin, 
i.e., $\Upsilon(r,\theta) = \big(r,\theta+\omega(r)\big)$ 
for some function $\omega(r)$. 

To understand the condition \eqref{fcondition}, note that $\gamma$ is an arbitrary function on $\mathbb{R}^2$ 
and if $g_0$ is radial then the Poisson bracket is 
$$ 
\{g_0, \gamma\} = r^{-1} g_0'(r) \frac{\partial \gamma(r,\psi)}{\partial \psi} 
$$ 
in polar coordinates $(r,\psi)$; we may solve \eqref{fcondition} as a PDE for $\gamma$ if and only if 
its integral around any circle centered at the origin is zero. 
This becomes the condition 
\begin{equation}\label{fsolution}
\int_{S_r} \Delta f\circ \Phi \, ds = \int_{S_r} \Delta f_0 \, ds \qquad \text{for all $r>0$}, 
\end{equation}
where $S_r$ is the circle of radius $r$ centered at the origin. 
It can be checked that this condition is independent of the choice of $\Phi$, i.e., of the choice of $\omega(r)$. 
Thus, for radial $g_0$ and arbitrary $f_0$, we get the necessary and sufficient conditions \eqref{gsolution} 
and \eqref{fsolution} for $f$ and $g$ to give a $u\in C_{u_0}$. 
\end{remark} 
%

\section{Examples}
\label{sec:ex}

In this section we describe examples in which the $L^2$ exponential map exhibits
Fredholm properties. In our examples the underlying manifold $M$ is modelled on one of
the eight three-dimensional geometries of Thurston each of which admits a compact realization,
cf. \cite{thurston}.
It is also convenient to assume that $M$ is equipped with a nonvanishing Killing field, however
this is not required as for example in the well known Euclidean case $\mathbb{R}^3$ with symmetry.
Along the way we derive a collection of formulas which are convenient to study the axisymmetric Euler equations
on any Riemannian 3-manifold.

\subsection{Axisymmetric vector fields on $3$-manifolds}

For simplicity we assume that the Killing field $K$ is smooth and nowhere vanishing.
%
%
Elementary vector calculus formulas for Riemannian 3-manifolds give
\begin{equation}\label{collinearcurl}
K \times \curl{K} = \grad \lvert K\rvert^2
\quad \text{and} \quad
\curl (K/|K|^2) = \phi \, K/|K|^2.
\end{equation}
where $\phi$ is a scalar-valued function. In particular, $[K,\curl{K}]=0$.

It is convenient to introduce an analogue of the skew-gradient and the Poisson bracket
for Hamiltonian functions on a symplectic manifold. They will be used throughout this section
to obtain explicit formulas involving axisymmetric vector fields.
Given $f\colon M\to\mathbb{R}$ with $K(f)=0$ we set
\begin{equation} \label{skewgrad}
\nabla^{\perp}f = \frac{K\times \grad f}{\lvert K\rvert^2}.
\end{equation}
If, in addition $K(g)=0$, then we set
\begin{equation} \label{poissonbracket}
\{f,g\}
=
\langle \nabla^{\perp}f, \nabla g\rangle
=
\frac{\langle K, \grad f\times \grad g\rangle}{\lvert K\rvert^2}.
\end{equation}
It follows that $\{g,f\} = -\{f,g\}$ and $\nabla f \times \nabla g = \{f,g\} K$.
For example, if $M=\mathbb{R}^3$ and $K = \partial_z$ on $\mathbb{R}^3$ then these definitions
reproduce the usual skew-gradient and Poisson bracket for functions.
Furthermore, we have
\begin{equation} \label{curlexpand}
\curl{K} = \phi K - \nabla^{\perp} \lvert K\rvert^2.
\end{equation}
%
%
\begin{lem} \label{divcurllemma}
	If $K(f)=K(g)=0$ then we have
	\begin{equation} \label{divdiv}
	\diver{(\nabla^{\perp}f)} = 0 = \diver{(gK)}
	\end{equation}
	\begin{equation} \label{curlcurl}
	\curl{(\nabla^{\perp} f)} = (\Delta_{K}f) K
	\quad \text{and} \quad
	\curl{(gK)} = - \nabla^{\perp}(\lvert K\rvert^2 g) + \phi g K
	\end{equation}
	where
	\begin{equation} \label{kappalaplacian}
	\Delta_K f = \diver{ ( \grad f/|K|^2 )}.
	\end{equation}
\end{lem}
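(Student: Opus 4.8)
The plan is to verify each of the four identities in Lemma \ref{divcurllemma} by direct computation using the vector calculus definitions \eqref{skewgrad}, \eqref{poissonbracket}, \eqref{curlexpand}, together with the standard identities for $\diver$ and $\curl$ on a Riemannian 3-manifold and the basic facts that $K$ is Killing (so $\diver K = 0$ and $\mathcal{L}_K$ commutes with $\grad$, $\curl$, $\diver$) and that $f,g$ are $K$-invariant. I would organize the work around the two ``building block'' fields $\nabla^\perp f$ and $gK$, doing the divergence identities \eqref{divdiv} first since they are the shortest, and then the curl identities \eqref{curlcurl}.

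\medskip

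First I would establish \eqref{divdiv}. For $\diver(gK) = 0$: since $\diver(gK) = g\,\diver K + \langle \grad g, K\rangle$ and both terms vanish ($K$ is divergence-free by \eqref{collinearcurl}, and $K(g)=0$ by hypothesis), this is immediate. For $\diver(\nabla^\perp f) = 0$: using the identity $\diver(A\times B) = \langle \curl A, B\rangle - \langle A, \curl B\rangle$ with $A = \grad f$ and $B = K/|K|^2$, the first term vanishes because $\curl\grad f = 0$, and the second term is $-\langle \grad f,\ \curl(K/|K|^2)\rangle = -\langle \grad f,\ \phi K/|K|^2\rangle$ by the second identity in \eqref{collinearcurl}, which vanishes since $K(f)=0$. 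Thus $\diver(\nabla^\perp f) = \diver\!\big((\grad f\times K)/|K|^2\big) = 0$.

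\medskip

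Next I would handle the curl identities. For $\curl(gK)$, I would write $gK = g\cdot K$ and use $\curl(gK) = g\,\curl K + \grad g \times K$. Substituting $\curl K = \phi K - \nabla^\perp|K|^2$ from \eqref{curlexpand} gives $\curl(gK) = g\phi K - g\,\nabla^\perp|K|^2 + \grad g\times K$. Now $\grad g \times K = -K\times\grad g = -|K|^2\,\nabla^\perp g$ by \eqref{skewgrad}, so combining the last two terms I need $-g\,\nabla^\perp|K|^2 - |K|^2\,\nabla^\perp g = -\nabla^\perp(|K|^2 g)$, which holds because $\nabla^\perp$ is a derivation in its argument (it is $|K|^{-2}K\times\grad(\cdot)$ and $\grad$ is a derivation; note $|K|^2 g$ is still $K$-invariant since $K(|K|^2)=0$ as $K$ is Killing). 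This yields $\curl(gK) = -\nabla^\perp(|K|^2 g) + \phi g K$. For $\curl(\nabla^\perp f) = (\Delta_K f)K$, the cleanest route is via Hodge duality: $\curl(\nabla^\perp f) = (\ast d(\nabla^\perp f)^\flat)^\sharp$, and one shows $d(\nabla^\perp f)^\flat$ has a single nonzero component by the same argument as in Lemma \ref{lem_concentration} — namely $\iota_K d(\nabla^\perp f)^\flat = \mathcal{L}_K(\nabla^\perp f)^\flat - d\iota_K(\nabla^\perp f)^\flat = 0 - 0$, using that $\nabla^\perp f$ is $K$-invariant (because $f$ is and $K$ is Killing) and $\langle \nabla^\perp f, K\rangle = 0$. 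So $\curl(\nabla^\perp f) = \psi K$ for some function $\psi$, and one identifies $\psi$ by pairing with $K$: $\psi|K|^2 = \langle \curl(\nabla^\perp f), K\rangle = \diver(\nabla^\perp f \times K) + \langle \nabla^\perp f, \curl K\rangle$; the second term vanishes since $\curl K$ is collinear with $K$ modulo $\nabla^\perp|K|^2$ and $\langle \nabla^\perp f, \nabla^\perp|K|^2\rangle = \{f, |K|^2\} = 0$... actually more directly, $\nabla^\perp f \times K = -K\times\nabla^\perp f$ and $K\times(K\times\grad f)/|K|^2 = (\langle K,\grad f\rangle K - |K|^2\grad f)/|K|^2 = -\grad f$ since $K(f)=0$, so $\psi|K|^2 = \diver(\grad f) $... this needs the $|K|^{-2}$ weight tracked carefully, and the result should collapse to $\psi = \Delta_K f = \diver(\grad f/|K|^2)$ as defined in \eqref{kappalaplacian}.

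\medskip

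The main obstacle I anticipate is bookkeeping the $|K|^{-2}$ weights correctly in the $\curl(\nabla^\perp f)$ computation: one must be careful whether the factor sits inside or outside the various differential operators, and the fact that $\curl(A/|K|^2) \neq |K|^{-2}\curl A$ in general means the ``obvious'' identification $\psi = \Delta f/|K|^2$ is wrong and one genuinely gets the weighted operator $\Delta_K f = \diver(\grad f/|K|^2)$. Getting this exactly right — probably by computing $\langle \curl(\nabla^\perp f), K\rangle/|K|^2$ and massaging with the divergence identity $\diver(X\times Y)$ until the weight lands in the right place — is the crux; everything else is a routine application of standard vector identities together with the Killing and $K$-invariance hypotheses. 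A local-coordinate check in the model metric \eqref{rotationmetric} (or just $K=\partial_z$ on $\mathbb{R}^3$) can be used as a sanity test for the final constant.
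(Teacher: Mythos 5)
Your verifications of $\diver(gK)=0$, of $\diver(\nabla^{\perp}f)=0$, and of $\curl(gK)=-\nabla^{\perp}(\lvert K\rvert^2 g)+\phi gK$ are correct and complete (the paper records only ``a straightforward computation,'' so there is no official argument to compare against). The gap is in the fourth identity, which you yourself flag as the crux and leave unfinished. Moreover, the branch you abort contains a genuine error: $\langle \nabla^{\perp}f,\nabla^{\perp}\lvert K\rvert^2\rangle$ is neither the Poisson bracket $\{f,\lvert K\rvert^2\}$ nor zero. For $K$-invariant $f,h$ one computes $\langle K\times\grad f, K\times\grad h\rangle=\lvert K\rvert^2\langle\grad f,\grad h\rangle-K(f)K(h)$, so $\langle\nabla^{\perp}f,\nabla^{\perp}h\rangle=\langle\grad f,\grad h\rangle/\lvert K\rvert^2$, which is generically nonzero. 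Had that term vanished you would have landed on $\psi=\Delta f/\lvert K\rvert^2$, which is \emph{not} $\Delta_Kf$; this discrepancy is exactly the weight-bookkeeping issue you anticipated.

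The good news is that your ``more direct'' branch closes up once that term is retained. With $X=\nabla^{\perp}f$ and $Y=K$ in $\diver(X\times Y)=\langle\curl X,Y\rangle-\langle X,\curl Y\rangle$, and using $\nabla^{\perp}f\times K=\grad f$ together with $\langle\nabla^{\perp}f,\curl K\rangle=\langle\nabla^{\perp}f,\phi K-\nabla^{\perp}\lvert K\rvert^2\rangle=-\langle\grad f,\grad\lvert K\rvert^2\rangle/\lvert K\rvert^2$, you get
\begin{equation*}
\psi\lvert K\rvert^2=\Delta f-\frac{\langle\grad f,\grad\lvert K\rvert^2\rangle}{\lvert K\rvert^2},
\qquad\text{hence}\qquad
\psi=\frac{\Delta f}{\lvert K\rvert^2}-\frac{\langle\grad f,\grad\lvert K\rvert^2\rangle}{\lvert K\rvert^4}=\diver\Big(\frac{\grad f}{\lvert K\rvert^2}\Big)=\Delta_Kf.
\end{equation*}
Alternatively, you can bypass the two-step ``collinear, then pair with $K$'' argument entirely: setting $\tilde K=K/\lvert K\rvert^2$, which is divergence-free because $K(\lvert K\rvert^2)=0$, the identity $\curl(A\times B)=(\diver B)A-(\diver A)B+[B,A]$ gives $\curl(\nabla^{\perp}f)=\curl(\tilde K\times\grad f)=(\Delta f)\tilde K+[\grad f,\tilde K]$; since $[\grad f,K]=-\grad(Kf)=0$ ($K$ Killing, $K(f)=0$), the bracket reduces to $-\lvert K\rvert^{-4}\langle\grad f,\grad\lvert K\rvert^2\rangle K$, and the two terms assemble into $(\Delta_Kf)K$ in one line, with collinearity to $K$ coming out automatically.
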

\begin{proof}
A straightforward computation.
\end{proof}

The next result is a special case of a Hodge-type decomposition for axisymmetric vector fields.
For simplicity, in what follows we will assume that $H^1(M)=0$.
\begin{prop} \label{axisymmetricvectorfieldclassification}
	Any axisymmetric vector field tangent to the boundary $\partial M$ can be decomposed as
	\begin{equation} \label{udecompose}
	u=\nabla^{\perp}f + gK, \qquad \text{where} \; K(f)=K(g)=0 \;\text{and $f\vert_{\partial M}$ is constant.}
	\end{equation}
	Conversely, any $u$ of the form \eqref{udecompose} is axisymmetric and tangent to the boundary.
\end{prop}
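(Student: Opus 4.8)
The plan is to prove the two directions of Proposition \ref{axisymmetricvectorfieldclassification} separately, with the decomposition direction being the substantive one and the converse being a routine verification using the formulas already established. For the decomposition, I would start with an axisymmetric vector field $u$ tangent to $\partial M$, and split off its component along $K$: write $g = \langle u, K\rangle/|K|^2$, which satisfies $K(g)=0$ since both $u$ and $K$ are invariant under the flow of $K$ (this uses $[K,u]=0$ and that $K$ is Killing). Then $w := u - gK$ is still axisymmetric, tangent to the boundary, and now satisfies $\langle w, K\rangle = 0$, i.e., $w$ is swirl-free in the sense of Section \ref{sec:conservation_swirl}. The goal is then to show that any divergence-free, swirl-free, axisymmetric $w$ tangent to $\partial M$ is of the form $\nabla^\perp f$ for a $K$-invariant function $f$ that is constant on $\partial M$.

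The key step is constructing the stream function $f$. Here I would use the observation (essentially Lemma \ref{lem_concentration}, or rather its proof) that for a swirl-free axisymmetric field, $\iota_K w^\flat = \langle w, K\rangle = 0$ and $\mathcal{L}_K w^\flat = 0$, hence $\iota_K dw^\flat = 0$; combined with $\mathrm{div}\, w = 0$ this strongly constrains $w$. Concretely, in the orthonormal-type frame $E_1, E_2, K/|K|$ one checks that $w$ has no component along $K$ and its remaining two components, being divergence-free and $K$-invariant, can be written via a single potential. The cleanest route: consider the 1-form $\iota_w \mu_{K^\perp}$ on the leaf space, or more directly note that $\ast(w^\flat \wedge K^\flat)$ is a closed 1-form (using $\mathrm{div}\,w=0$ and that $K$ is Killing), hence exact since $H^1(M)=0$, yielding $\ast(w^\flat\wedge K^\flat) = df$ for some $f$; one then checks $K(f)=0$ and that unwinding the definitions gives exactly $w = \nabla^\perp f = (K\times\nabla f)/|K|^2$. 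The boundary condition: since $w$ is tangent to $\partial M$ and $K$ is tangent to $\partial M$, the form $df = \ast(w^\flat\wedge K^\flat)$ annihilates $T\partial M$, so $f$ is locally constant on $\partial M$; with $H^1(M)=0$ (so $\partial M$ connected in the relevant components, or one absorbs the constants) we may take $f|_{\partial M}$ constant.

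For the converse, given $u = \nabla^\perp f + gK$ with $K(f)=K(g)=0$ and $f|_{\partial M}$ constant, I would simply invoke Lemma \ref{divcurllemma}: equation \eqref{divdiv} gives $\mathrm{div}(\nabla^\perp f) = 0 = \mathrm{div}(gK)$, so $\mathrm{div}\,u = 0$. Axisymmetry, i.e. $[K,u]=0$, follows because $\nabla^\perp$ and multiplication-by-$gK$ are built natively from $K$-invariant data: $\mathcal{L}_K(\nabla^\perp f) = 0$ since $\mathcal{L}_K K = 0$, $\mathcal{L}_K(\nabla f) = \nabla(K(f)) = 0$, and $\mathcal{L}_K|K|^2 = 0$; similarly $\mathcal{L}_K(gK) = (K(g))K = 0$. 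Tangency to $\partial M$: $gK$ is tangent since $K$ is, and $\nabla^\perp f = (K\times\nabla f)/|K|^2$ is tangent because on $\partial M$ the gradient $\nabla f$ is normal (as $f$ is constant there) while $K$ is tangent, so their cross product is tangent.

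I expect the main obstacle to be the rigorous construction of the stream function $f$ and the bookkeeping of the boundary term — in particular verifying that the closed 1-form $\ast(w^\flat\wedge K^\flat)$ is genuinely closed (this is where $\mathrm{div}\, w = 0$ and the Killing property of $K$ enter, via $d\ast(w^\flat\wedge K^\flat) = \pm(\mathrm{div}\,w)\,\mu_{\text{leaf}}$-type identities in the moving frame) and handling the boundary condition in a way consistent with the hypothesis $H^1(M)=0$. The interior identity $w = \nabla^\perp f$ is then a coordinate computation in the adapted frame, closely parallel to the Euclidean cylindrical-coordinates case, and I would present it compactly rather than in full.
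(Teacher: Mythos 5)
Your proposal is correct and follows essentially the same route as the paper: your closed $1$-form $\ast(w^{\flat}\wedge K^{\flat})$ is exactly $-(K\times u)^{\flat}$, so your key step is the paper's observation that $\operatorname{curl}(K\times u)=0$ (from $[K,u]=0$, $\operatorname{div}u=0$, $\operatorname{div}K=0$) together with $H^1(M)=0$ to produce the potential $f$, after which unwinding $K\times(K\times u)$ gives the decomposition. Splitting off $gK$ first rather than solving for both components at once, and your Lie-derivative verification of the converse, are only cosmetic variations on the paper's argument.
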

\begin{proof}
Since $[u,K]=0$ and both fields are divergence free, we have $\mathrm{curl}\, (K\times u) = 0$,
which together with the assumption on $M$ implies that $K \times u = -\nabla f$ for some function $f$.
Clearly, $K(f)=0$ and
$ K \times (K \times u) = -\lvert K\rvert^2 \, u + \langle K, u\rangle \, K = K \times \grad f$.
Setting $g = \langle K, u\rangle/\lvert K\rvert^2$ and solving for $u$, we get \eqref{udecompose}.
	
For the converse, applying the divergence operator to \eqref{udecompose} and using Lemma \ref{divcurllemma}
we obtain $\mathrm{div}\, u =0$. Since $K$ and $\nu \times K$ are tangent to the boundary, we find that
$\langle \nabla^{\perp}f, \nu \rangle = |K|^{-2} \langle \grad f, \nu \times K \rangle = 0$
because $f|_{\partial M}$ is constant.
Finally, a direct computation gives
$[K, u] = -\curl{(K \times u)} = \curl \grad f = 0$.
\end{proof}

We know from Proposition $\ref{prop_lie_algebra}$ that if $u$ and $v$ are axisymmetric, then so is $[u,v]$.
This commutator can be written explicitly in terms of the decomposition \eqref{udecompose}.
\begin{prop} \label{commutatorexplicitprop}
Suppose $f$ and $g$ are functions on $M$ such that $K(f)=K(g)=0$. Then, we have
\begin{align*}
[\nabla^{\perp}f, \nabla^{\perp}g] = \nabla^{\perp}\{f,g\} - \{f,g\} \frac{\phi K}{\lvert K\rvert^2},
\quad
[\nabla^{\perp}f, gK] = \{f,g\} K, \label{sgradswirl}
\quad
[fK, gK] = 0.
\end{align*}
\end{prop}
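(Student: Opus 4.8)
The plan is to compute all three brackets directly by using the basic identities established earlier in this section, namely the skew-gradient formula \eqref{skewgrad}, the Poisson bracket formula \eqref{poissonbracket} with its consequence $\nabla f\times\nabla g = \{f,g\}K$, the curl identities \eqref{curlcurl} of Lemma \ref{divcurllemma}, and the relation $\curl K = \phi K - \nabla^\perp|K|^2$ from \eqref{curlexpand}. The key observation is that for divergence-free fields on a $3$-manifold the Lie bracket can be rewritten via the vector-calculus identity $[X,Y] = \curl(Y\times X)$ when $\diver X = \diver Y = 0$ and $H^1(M)=0$; all three of $\nabla^\perp f$, $\nabla^\perp g$, $fK$, $gK$ are divergence-free by \eqref{divdiv}, so this applies throughout. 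I would split the proof into the three cases in the order they appear in the statement.

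For $[fK, gK]$, I would simply note $gK\times fK = fg\,(K\times K) = 0$, so the bracket vanishes; alternatively observe $[fK,gK] = f(K g) K - g(Kf)K - \dots$ expands using $Kf = Kg = 0$ and $[K,K]=0$ to give zero directly, which is the cleanest route and avoids even invoking the curl identity. For $[\nabla^\perp f, gK]$, I would compute $gK \times \nabla^\perp f = gK\times \frac{K\times\nabla f}{|K|^2} = \frac{g}{|K|^2}\big(\langle K,\nabla f\rangle K - |K|^2\nabla f\big) = -g\nabla f$ using the double cross-product identity and $K(f)=0$. Then $[\nabla^\perp f, gK] = \curl(gK\times\nabla^\perp f) = -\curl(g\nabla f) = -\nabla g\times\nabla f = \{f,g\}K$, using $\curl\nabla f = 0$ and $\nabla f\times\nabla g = \{f,g\}K$. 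This is short and self-contained.

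The genuinely computational case — and the one I expect to be the main obstacle — is $[\nabla^\perp f,\nabla^\perp g]$, where the nontrivial $\phi K/|K|^2$ correction term appears. Here I would write $\nabla^\perp f = \frac{K\times\nabla f}{|K|^2}$ and $\nabla^\perp g = \frac{K\times\nabla g}{|K|^2}$, form the cross product $\nabla^\perp g\times\nabla^\perp f$, and simplify it using repeated applications of the double cross-product identity together with $\nabla f\times\nabla g = \{f,g\}K$ and $K(f)=K(g)=0$; the upshot should be that $\nabla^\perp g\times\nabla^\perp f = \frac{\{f,g\}}{|K|^2}K \times (\text{something}) $ reducing to $-\nabla^\perp(\text{stuff})$ up to lower-order terms. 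Then applying $\curl$ and carefully collecting the terms that arise from differentiating the $1/|K|^2$ factor and from $\curl K = \phi K - \nabla^\perp|K|^2$ produces exactly the claimed $\nabla^\perp\{f,g\} - \{f,g\}\frac{\phi K}{|K|^2}$. One can cross-check the answer using the Jacobi identity from Proposition \ref{prop_lie_algebra} and the identity $[\nabla^\perp f, gK] = \{f,g\}K$ already proved, or alternatively verify it in the model case $M=\mathbb{R}^3$, $K=\partial_z$ (where $\phi = 0$ and the formula reduces to the classical $[\nabla^\perp f,\nabla^\perp g] = \nabla^\perp\{f,g\}$ for $2$D incompressible flow). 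The bookkeeping with $|K|^2$-derivatives is the only place where care is needed; everything else is a routine unwinding of the definitions.
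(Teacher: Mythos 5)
Your proposal is correct and is in the same spirit as the paper's proof, which merely records that the identities follow from the definitions; the reduction $[X,Y]=\curl(Y\times X)$ for divergence-free fields is exactly the device the paper uses elsewhere (e.g.\ in the proofs of Propositions \ref{axisymmetricvectorfieldclassification} and \ref{coadjointthm}), and your second and third brackets are fully and correctly verified. For the first bracket, which you flag as the main obstacle, the computation closes more easily than you anticipate: by the double cross-product identity, $\langle K\times\nabla f,K\rangle=0$ and $\langle K\times\nabla f,\nabla g\rangle=\langle K,\nabla f\times\nabla g\rangle=|K|^2\{f,g\}$, so
\begin{equation*}
\nabla^{\perp}g\times\nabla^{\perp}f
=\frac{(K\times\nabla g)\times(K\times\nabla f)}{|K|^4}
=-\frac{\{f,g\}}{|K|^2}\,K,
\end{equation*}
which is not of the form $-\nabla^{\perp}(\text{stuff})$ as you guessed but is simply a swirl field. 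Since $K$ is Killing one has $K(\{f,g\})=K(|K|^2)=0$, so the second identity of \eqref{curlcurl} applied with $g$ replaced by $\{f,g\}/|K|^2$ gives
\begin{equation*}
[\nabla^{\perp}f,\nabla^{\perp}g]
=-\curl\Big(\frac{\{f,g\}}{|K|^2}\,K\Big)
=\nabla^{\perp}\{f,g\}-\{f,g\}\,\frac{\phi K}{|K|^2},
\end{equation*}
and no separate bookkeeping of derivatives of $1/|K|^2$ or appeal to \eqref{curlexpand} is required.
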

\begin{proof}
Follows directly from the definitions.
\end{proof}

Using Proposition \ref{commutatorexplicitprop} we can now similarly express the coadjoint operator
$\ad_v^\ast u$ for axisymmetric vector fields $u$ and $v$.
This could be done either on $T_e\mathscr{D}_\mu^s$ or by restricting to the subalgebra $T_e\mathscr{A}_\mu^s$
of axisymmetric vector fields.
That the end result is the same is a reflection of the fact that the space of axisymmetric diffeomorphisms
is a totally geodesic submanifold, cf. Theorem \ref{prop_he_inv}.

%
%
%
%

%
\begin{prop} \label{coadjointthm}
	If $u=\nabla^{\perp}f + gK$ and $v=\nabla^{\perp}h+jK$ then
	\begin{equation} \label{coadjoint}
	\ad_v^{*}u
	=
	\{h,\sigma\} \, \frac{K}{\lvert K\rvert^2} - \nabla^{\perp}\Delta_{K}^{-1}\{j,\sigma\}
	-
	\nabla^{\perp} \Delta_{K}^{-1}\{\Delta_{K}f + \phi g, h\}
	\end{equation}
	where $\Delta_{K}^{-1}$ is defined by the zero Dirichlet boundary condition
	and $\sigma = \lvert K\rvert^2 g$.
	In particular, if $u,v\in T_e\mathscr{A}_\mu^s$ then $\ad_v^{*}u\in T_e \mathscr{A}_\mu^s$.
\end{prop}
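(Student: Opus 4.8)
The plan is to work directly from the concrete formula for the algebra coadjoint on the volumorphism group of a $3$-manifold, namely $\ad_v^{*}u = \curl\Delta^{-1}[v,\curl u]$ from Lemma~\ref{lem:Ads}, and to reduce everything to the two bookkeeping tools already in hand: the curl/divergence identities \eqref{curlcurl} of Lemma~\ref{divcurllemma} and the commutator identities of Proposition~\ref{commutatorexplicitprop}. Rather than invert the Hodge Laplacian $\Delta$ directly, I would take the right-hand side of \eqref{coadjoint} as an \emph{ansatz} $W$ and then identify $\ad_v^{*}u$ with $W$ by checking that the two divergence-free fields have the same curl; since both are tangent to $\partial M$ and $\mathrm{H}^1(M)=0$, their difference is a harmonic field and hence vanishes.

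First I would compute the vorticity of $u=\nabla^{\perp}f+gK$. Applying the two formulas in \eqref{curlcurl} piecewise and writing $\sigma=\lvert K\rvert^2 g$ gives $\curl u = (\Delta_K f+\phi g)\,K-\nabla^{\perp}\sigma$. Because $K$ is Killing its flow commutes with $\curl$, so $\curl u$ is again axisymmetric, and I may insert it together with $v=\nabla^{\perp}h+jK$ into the bracket. Expanding $[v,\curl u]$ bilinearly and applying Proposition~\ref{commutatorexplicitprop} termwise — the relevant instances being $[\nabla^{\perp}h,(\Delta_K f+\phi g)K]=\{h,\Delta_K f+\phi g\}K$, $[\nabla^{\perp}h,\nabla^{\perp}\sigma]=\nabla^{\perp}\{h,\sigma\}-\{h,\sigma\}\,\phi K/\lvert K\rvert^2$, $[jK,(\Delta_K f+\phi g)K]=0$, and $[jK,\nabla^{\perp}\sigma]=\{j,\sigma\}K$ — one collects
\begin{equation*}
[v,\curl u]=\Big(\{h,\Delta_K f+\phi g\}+\tfrac{\phi}{\lvert K\rvert^2}\{h,\sigma\}-\{j,\sigma\}\Big)K-\nabla^{\perp}\{h,\sigma\}.
\end{equation*}

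Next, put $W:=\{h,\sigma\}\,K/\lvert K\rvert^2-\nabla^{\perp}\Delta_K^{-1}\{j,\sigma\}-\nabla^{\perp}\Delta_K^{-1}\{\Delta_K f+\phi g,h\}$. Since all of the functions involved are $K$-invariant and the two stream parts vanish on $\partial M$, this is a field of the form \eqref{udecompose}, so by Lemma~\ref{divcurllemma} and Proposition~\ref{axisymmetricvectorfieldclassification} it is axisymmetric, divergence-free and tangent to the boundary. Applying $\curl$ to $W$ via \eqref{curlcurl}, each term $\nabla^{\perp}\Delta_K^{-1}(\cdot)$ contributes $(\Delta_K\Delta_K^{-1}(\cdot))K=(\cdot)K$ to the $K$-component, the term $GK$ contributes $-\nabla^{\perp}(\lvert K\rvert^2 G)+\phi G K$, and a short computation shows that $\curl W$ equals precisely the field displayed above. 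On the other hand $\curl(\ad_v^{*}u)=\curl\curl\Delta^{-1}[v,\curl u]=\Delta\Delta^{-1}[v,\curl u]=[v,\curl u]$, using that $\curl\curl$ coincides with the Hodge Laplacian on divergence-free fields and that $\Delta^{-1}$ preserves divergence-freeness. Hence $\curl W=\curl(\ad_v^{*}u)$; the difference $W-\ad_v^{*}u$ is then divergence-free, curl-free and tangent to $\partial M$, so $\mathrm{H}^1(M)=0$ forces $W=\ad_v^{*}u$, which is \eqref{coadjoint}. The closing assertion is then immediate, since $W$ is manifestly axisymmetric, with the regularity $\ad_v^{*}u\in T_e\mathscr{A}_\mu^s$ following from the two-derivative gain of $\Delta_K^{-1}$.

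I expect the main difficulty to be organizational rather than conceptual: carefully tracking signs through the bilinear expansion of $[v,\curl u]$ and through the second application of $\curl$, together with the standard but somewhat delicate background facts that $\curl\curl=\Delta$ on divergence-free fields, that $\Delta_K^{-1}$ with zero Dirichlet data is the right inverse to use, and that recovering a boundary-tangent divergence-free field from its curl genuinely uses $\mathrm{H}^1(M)=0$ — all of which are already part of the setup. No input beyond Lemmas~\ref{lem:Ads} and \ref{divcurllemma} and Proposition~\ref{commutatorexplicitprop} should be required.
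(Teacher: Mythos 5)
Your argument is correct and follows essentially the same route as the paper: compute $\curl u = (\Delta_K f + \phi g)K - \nabla^{\perp}\sigma$ from \eqref{curlcurl}, expand $[v,\curl u]$ termwise with Proposition \ref{commutatorexplicitprop}, and invert the curl using the absence of harmonic fields. The only (cosmetic) difference is that the paper writes down an explicit formula for $\curl^{-1}$ on axisymmetric fields and applies it, whereas you verify the ansatz $W$ by taking its curl and invoking uniqueness of a divergence-free, boundary-tangent field with prescribed curl when $\mathrm{H}^1(M)=0$ — the two are equivalent and rest on the same computation.
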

\begin{proof}
First observe that $\Delta_{K}$ is an elliptic self-adjoint operator. If $M$ has a boundary
then $\Delta_{K}\varphi = \psi$ has a unique solution with $\varphi=0$ on the boundary;
otherwise, the solution is unique up to a constant.
In either case the operator $\nabla^{\perp}\Delta_{K}^{-1}$ is well-defined.
	
On $T_e\mathscr{D}_\mu^s$ we have $\ad_v^{*}u = -P(v\times \curl{u})$; see \cite{EMP06}.
By assumption $M$ has no harmonic fields.
Then $\curl$ is invertible on $T_e\mathscr{D}_\mu^s$ and we may write
$$
\ad_v^{*}u = \curl^{-1} \curl(-v\times \curl{u}) = \curl^{-1}[v, \curl{u}].
$$
From \eqref{curlcurl} we have $\curl{u} = (\Delta_{K}f + \phi g)\, K - \nabla^{\perp}(\lvert K\rvert^2 g)$
and applying the commutator formula of Proposition \ref{commutatorexplicitprop}, we obtain
\begin{equation*} \label{commutatoradstar}
[v,\curl{u}]
=
-\nabla^{\perp}\{h, \lvert K \rvert^2 g\}
+
\{h, \lvert K\rvert^2g\} \,\frac{K}{\lvert K\rvert}
+
\{h, \Delta_{K}f + \phi g\} K
+
\{\lvert K\rvert^2 g, j\} K.
\end{equation*}
Finally, inverting \eqref{curlcurl} on axisymmetric fields, we get
$$
\curl^{-1}(\nabla^{\perp}p + q K)
=
\nabla^{\perp} \Delta_{K}^{-1} \Big( q + \frac{\phi p}{\lvert K\rvert^2} \Big) - \frac{p}{\lvert K\rvert^2} K.
$$
Combining the above formulas gives \eqref{coadjoint}.
\end{proof}
\begin{remark} \label{rmk_compactness}
Proposition $\ref{coadjointthm}$ can be used to obtain an alternative proof of the compactness result
for the operator $v\mapsto \ad_{u_0}^{*}v$ when $u_0$ is swirl-free, c.f. Lemma \ref{compactness_lemma}.
Indeed, if $v \in T_e\mathscr{A}_\mu^s$ and $g=0$ then from \eqref{coadjoint} we get
$\ad_v^{*}u_0 = -\nabla^{\perp} \Delta_{K}^{-1}\{\Delta_{K}f, h\}$
which, since $u_0$ is smooth, is in $H^{s+1}$ thus gaining a derivative.
\end{remark}

Finally, it is useful to rewrite the Euler equations explicitly in terms of the operators $\{ \, , \, \}$ from \eqref{poissonbracket} and $\Delta_K$ from \eqref{kappalaplacian}. In the next subsection we will explore this equation in the setting of
the eight model geometries of Thurston.
\begin{thm} \label{eulerequationcor}
The incompressible Euler equations \eqref{eq:Euler} can be written as Euler-Arnold equations
on $T_e\mathscr{A}_\mu^s$ in the form
\begin{equation} \label{eulerequation}
\begin{split}
\Delta_{K} \partial_t f + \{f,\Delta_{K}f\} + \left\{f,\frac{\phi \sigma}{\lvert K\rvert^2}\right\} - \frac{1}{2} \left\{ \frac{1}{\lvert K\rvert^2}, \sigma^2\right\} &= 0
\\
\partial_t \sigma + \{f,\sigma\} &= 0
\end{split}
\end{equation}
where $u = \nabla^\perp f + \frac{\sigma K}{\lvert K\rvert^2}$ in terms of the stream function $f$ and the swirl $\sigma$.
\end{thm}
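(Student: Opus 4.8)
The plan is to translate the Euler–Arnold equation $\partial_t u + \ad^*_u u = 0$ into the $(f,\sigma)$ variables using the coadjoint formula of Proposition \ref{coadjointthm}. First I would recall that the incompressible Euler equations \eqref{eq:Euler} are exactly the geodesic equations of the $L^2$ metric on $\mathscr{D}_\mu^s$, which in Arnold's form read $\partial_t u = -\ad^*_u u$ (modulo the pressure, which is absorbed into the Hodge projection defining $\ad^*$). Since $\mathscr{A}_\mu^s$ is totally geodesic by Theorem \ref{prop_he_inv}, the same equation holds on $T_e\mathscr{A}_\mu^s$, and by Proposition \ref{coadjointthm} the coadjoint operator maps axisymmetric fields to axisymmetric fields, so there is no inconsistency in computing $\ad^*_u u$ within the subalgebra. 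Writing $u = \nabla^\perp f + gK$ with $\sigma = |K|^2 g$, I set $v = u$ (so $h=f$, $j=g$) in \eqref{coadjoint}.

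The key computational step is to specialize \eqref{coadjoint} to $v=u$. With $h=f$ and $j=g$, the formula gives
\begin{equation*}
\ad^*_u u = \{f,\sigma\}\,\frac{K}{|K|^2} - \nabla^\perp \Delta_K^{-1}\{g,\sigma\} - \nabla^\perp \Delta_K^{-1}\{\Delta_K f + \phi g, f\}.
\end{equation*}
Here I would note $\{g,\sigma\} = \{g, |K|^2 g\} = \{g,|K|^2\}g = -\tfrac12\{|K|^2, g^2\}$ — actually, better to keep things in terms of $\sigma$: using $g = \sigma/|K|^2$ and the Leibniz/antisymmetry properties of the bracket following \eqref{poissonbracket}, one has $\{g,\sigma\} = \{\sigma/|K|^2,\sigma\} = -\tfrac12\{1/|K|^2,\sigma^2\}$, and similarly $\{\phi g, f\} = \{\phi\sigma/|K|^2, f\}$. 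Then $\partial_t u = \nabla^\perp \partial_t f + \partial_t g\, K$, and I equate the $\nabla^\perp(\cdot)$-components and the $K$-components of $\partial_t u = -\ad^*_u u$ separately. Equating $K$-components (after dividing by $|K|^2$, or rather comparing the $gK$ versus $\sigma K/|K|^2$ normalizations consistently) yields $\partial_t \sigma + \{f,\sigma\} = 0$. Equating the skew-gradient components and applying $\Delta_K$ to clear the $\Delta_K^{-1}$ — using $\Delta_K(\nabla^\perp$-part) via \eqref{curlcurl}, i.e. $\curl(\nabla^\perp p) = (\Delta_K p)K$ so that the stream-function part is recovered by $\Delta_K$ — gives
\begin{equation*}
\Delta_K \partial_t f = -\{\Delta_K f, f\} - \{\phi\sigma/|K|^2, f\} - \tfrac12\{1/|K|^2,\sigma^2\},
\end{equation*}
and rearranging with $\{\Delta_K f, f\} = -\{f,\Delta_K f\}$ produces the first equation of \eqref{eulerequation}.

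The main obstacle I anticipate is bookkeeping: carefully tracking the two different normalizations ($gK$ versus $\sigma K/|K|^2$) through \eqref{coadjoint}, correctly rewriting $\{g,\sigma\}$ and $\{\phi g, f\}$ in terms of $\sigma$ and $|K|^2$ using only the bracket identities $\{g,f\}=-\{f,g\}$, $\nabla f\times\nabla g = \{f,g\}K$, and the Leibniz rule, and verifying that applying $\Delta_K$ to both sides of the skew-gradient component is legitimate — i.e., that $\Delta_K \nabla^\perp \Delta_K^{-1}$ acts as the identity on the relevant functions, which follows from the invertibility of $\Delta_K$ established in the proof of Proposition \ref{coadjointthm} together with the uniqueness clause (up to constants, which are killed by $\nabla^\perp$). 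Once these identifications are in place the result is immediate, so I would present the computation compactly, citing Proposition \ref{coadjointthm} and Lemma \ref{divcurllemma} for the underlying identities and Theorem \ref{prop_he_inv} for the reduction to the subalgebra.
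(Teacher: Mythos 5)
Your strategy is exactly the paper's: substitute $h=f$, $j=g$ into \eqref{coadjoint} with $v=u$, separate the $K$- and $\nabla^{\perp}$-components of $\partial_t u + \ad^*_u u = 0$, and apply $\Delta_K$ to the skew-gradient part (your transport equation for $\sigma$ comes out correctly). The one nontrivial input is identifying $\{j,\sigma\}=\{g,\sigma\}$ with a bracket of $1/\lvert K\rvert^2$ and $\sigma^2$; this is precisely what the paper's proof is devoted to, and your route via the Leibniz rule for the bracket is legitimate and arguably more direct than the paper's vector identity $\grad g\times\grad(\lvert K\rvert^2 g)=g\,\curl K(g)\,K$.

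However, the signs as written do not produce \eqref{eulerequation}. The Leibniz rule gives
\begin{equation*}
\{g,\sigma\}=\Big\{\tfrac{\sigma}{\lvert K\rvert^2},\sigma\Big\}
=\sigma\Big\{\tfrac{1}{\lvert K\rvert^2},\sigma\Big\}
=+\tfrac12\Big\{\tfrac{1}{\lvert K\rvert^2},\sigma^2\Big\},
\end{equation*}
not $-\tfrac12\{\cdot,\cdot\}$ as you claim. Moreover, your displayed equation for $\Delta_K\partial_t f$, once rearranged using antisymmetry, reads
$\Delta_K\partial_t f-\{f,\Delta_Kf\}-\{f,\phi\sigma/\lvert K\rvert^2\}+\tfrac12\{1/\lvert K\rvert^2,\sigma^2\}=0$,
which is \eqref{eulerequation} with the signs of all three bracket terms reversed. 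The correct bookkeeping from $\partial_tu+\ad^*_uu=0$ is
$\Delta_K\partial_t f=\{g,\sigma\}+\{\Delta_Kf+\phi g,\,f\}$,
and with the correct sign of $\{g,\sigma\}$ and antisymmetry this is exactly \eqref{eulerequation}. So the plan is right and the repair is purely a matter of signs, but as stated the computation does not close.
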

\begin{proof}
The proof follows by substituting for $u$ in the equation
$\partial_t u + \mathrm{ad}^\ast_u u =0$ and using the formula \eqref{coadjoint}
with $h=f$ and $j=g$ together with the identity
$$
\grad g\times \grad (\lvert K\rvert^2 g)
=
g\grad g\times \grad \lvert K\rvert^2
=
g\grad g\times (K\times \curl{K}) = g\curl{K}(g) K.
$$
By formulas \eqref{poissonbracket}--\eqref{curlexpand}, together with the fact that $K(g)=0$,
we obtain
\begin{align*}
g \curl{K}(g) &= g\phi K(g) - g\langle \nabla^{\perp}\lvert K\rvert^2, \nabla g\rangle = -g\{\lvert K\rvert^2, g\} \\
&= -\frac{\sigma}{\lvert K\rvert^2} \, \left\{ \lvert K\rvert^2, \frac{\sigma}{\lvert K\rvert^2}\right\} = \frac{1}{2} \left\{ \frac{1}{\lvert K\rvert^2}, \sigma^2\right\}.
\end{align*}
\end{proof}
%

\subsection{The Euler equations in Thurston geometries}
\label{sec_examples}

Each choice of a $3$-manifold $M$ (compact or not) and a Killing vector field $K$
leads to a system of Euler-Arnold equations \eqref{eulerequation}.
If $K$ is allowed to vanish then the system \eqref{eulerequation} may be singular (its coefficients suitably reinterpreted)
on the zero set $\mathcal{Z}$ of $K$, just as in the case of the standard axisymmetric Euler equations in $\mathbb{R}^3$. 
To describe the two simplest cases, recall that $\phi$ is defined as in the previous section, cf (\ref{collinearcurl}). Then we have:
\begin{itemize}
\item $\phi\equiv 0$, or in other words $\langle K, \curl{K}\rangle = 0$. We will see that this is the integrable case when the swirl-free
diffeomorphisms form a subgroup. In this case the motion is driven by the gradient of $\lvert K\rvert^2$.
\item Both $\phi$ and $\lvert K\rvert$ are constant. In this case we get constant-coefficient equations, and the geometry involves a fibration of a contact manifold over a symplectic surface.
\end{itemize}

\subsubsection{The integrable case: rotations and translations}

We first analyze the case $\phi\equiv 0$.
It is worth recalling from Section \ref{sec:swirlfree} that depending on the properties of
the normal distribution $K^\perp$ in \eqref{eq:K-int} two types of extreme cases that can appear.
If this distribution is integrable on $M \setminus \mathcal{Z}$ then we have a submanifold of swirl-free diffeomorphisms.

By Proposition \ref{axisymmetricvectorfieldclassification}, axisymmetric divergence-free vector fields are expressed
in terms of functions $f$ and $g$ satisfying $K(f)\equiv K(g)\equiv 0$, and so $f$ and $g$ must be constant
along orbits of $K$. This imposes very different constraints depending on whether orbits of $K$ are dense
in $M$ or not. In what follows, we will assume that $K$ is \emph{regular}, as defined below.
\vskip 0.5cm
\hspace{-0.2cm}\textbf{(R)} For each $x \in M$ with $K(x) \neq 0$, there is a neighborhood $U$ of $x$ such that the intersection $L_x \cap U$ is homeomorphic to $\mathbb{R}$, where $L_x$ is the maximal integral curve of $K$ through $x$.
\vskip 0.5cm
This condition ensures that orbits of $K$ cannot be dense in any set, so that they must be either closed circles in the compact case or lines escaping compact sets in the noncompact case. If $K$ is not regular, there may be very few axisymmetric diffeomorphisms.

In light of the formulas derived in Section $\ref{sec:ex}$, we revisit the distribution, previously introduced in Section $\ref{sec:swirlfree}$, given by
\begin{equation}\label{eq_appendix_K}
M {\setminus} \mathcal{Z} \ni x \to K^\perp_x = \big\{ V \in T_xM : \langle V, K(x) \rangle = 0 \big\}
\end{equation}
on all points $M {\setminus} \mathcal{Z}$ outside its zero set $\mathcal{Z}$.

\begin{theorem}\label{integrable}
	Suppose that the function $\phi$ defined by \eqref{collinearcurl} is identically zero on $M{\setminus}\mathcal{Z}$. Then, the distribution $K^{\perp}$	is integrable, and there is a family of submanifolds $N_{\theta}$ of $M{\setminus}\mathcal{Z}$ on which $K^{\flat}$ vanishes. Functions $f$ satisfying $K(f)\equiv 0$ may be specified arbitrarily on any individual $N_{\theta}$ and then extended to $M{\setminus}\mathcal{Z}$ using the flow of $K$.
\end{theorem}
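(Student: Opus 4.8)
The plan is to prove integrability of $K^\perp$ via the Frobenius theorem applied to the $1$-form $K^\flat$, exploiting the hypothesis $\phi\equiv 0$. Recall from \eqref{curlexpand} that $\curl K = \phi K - \nabla^\perp|K|^2$, so when $\phi\equiv 0$ we have $\curl K = -\nabla^\perp|K|^2$. Translating this into the language of forms on $M\setminus\mathcal Z$: the vorticity $2$-form $dK^\flat = \ast(\curl K)^\flat$ is then proportional to $\ast(\nabla^\perp|K|^2)^\flat$, and since $\nabla^\perp|K|^2 = (K\times\grad|K|^2)/|K|^2$ is everywhere orthogonal to $K$, a direct computation (using $\ast$ and the identity $\nabla f\times\nabla g=\{f,g\}K$ from \eqref{poissonbracket}) shows that $dK^\flat \wedge K^\flat = 0$. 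By the Frobenius theorem this is exactly the condition that the distribution $\ker K^\flat = K^\perp$ be integrable on $M\setminus\mathcal Z$.

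Next I would produce the family of leaves $N_\theta$ and the vanishing-of-$K^\flat$ statement. Integrability gives, through each point of $M\setminus\mathcal Z$, a maximal integral submanifold of $K^\perp$; these are codimension-one and $K^\flat$ restricted to each such submanifold is identically zero since the submanifold is tangent to $\ker K^\flat$ by construction. The parameter $\theta$ labeling the leaves comes from the regularity hypothesis \textbf{(R)}: locally the orbits of $K$ foliate a neighborhood as $\mathbb R$-factors, so locally $M\setminus\mathcal Z$ splits as (leaf)$\times$(orbit), and the orbit coordinate — which can be normalized to the flow time of $K$, i.e.\ $\theta$ — distinguishes the leaves. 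I would note here that transitivity of the flow of $K$ on the leaves (the standing assumption of Section \ref{sec:swirlfree}, valid in the examples) is what lets us use a single parameter $\theta$ globally rather than just locally, since $\varphi_t$ carries $N_0$ to $N_t$ by Proposition \ref{prop_flow_K}.

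Finally, the extension statement: if $f$ is prescribed on one leaf $N_\theta$ and we want $K(f)\equiv 0$, then $f$ must be constant along each orbit of $K$, so we simply define $f$ at a point $y\in M\setminus\mathcal Z$ by flowing back along $K$ to the leaf $N_\theta$ and reading off the prescribed value — well-defined precisely because each orbit meets $N_\theta$ in exactly one point (again by \textbf{(R)} plus transitivity), and smooth because the flow of $K$ is smooth and the foliation is smooth. The main obstacle I anticipate is not the Frobenius argument, which is a short computation, but the bookkeeping needed to pass from the \emph{local} product structure guaranteed by \textbf{(R)} to a \emph{globally} consistent leaf parameter $\theta$ and a single-valued extension map; this is where one genuinely uses that orbits are closed circles (compact case) or properly embedded lines (noncompact case) rather than merely immersed, so that "flow back to $N_\theta$" is unambiguous. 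I would handle this by first establishing the claim on the saturation of a single leaf under the flow and then invoking transitivity to cover all of $M\setminus\mathcal Z$.
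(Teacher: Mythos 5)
Your proof is correct, but the key integrability step runs through a different formulation of Frobenius than the paper's. The paper deduces involutivity of $K^{\perp}$ from the first bracket formula in Proposition \ref{commutatorexplicitprop}: when $\phi\equiv 0$, $[\nabla^{\perp}f,\nabla^{\perp}g]=\nabla^{\perp}\{f,g\}$ lies again in $K^{\perp}$, and since fields of the form $\nabla^{\perp}f$ locally span $K^{\perp}$ (which requires a small local argument, e.g.\ flow-box coordinates for $K$ under hypothesis \textbf{(R)}), Frobenius in its vector-field form applies. You instead verify the dual criterion $K^{\flat}\wedge dK^{\flat}=0$: since $dK^{\flat}=\ast(\curl K)^{\flat}$ and $\alpha\wedge\ast\beta=\langle\alpha^{\sharp},\beta^{\sharp}\rangle\,\mu$ for $1$-forms on a $3$-manifold, one gets $K^{\flat}\wedge dK^{\flat}=\langle K,\curl K\rangle\,\mu=\phi\,\lvert K\rvert^{2}\mu$ by \eqref{curlexpand} and the orthogonality $\nabla^{\perp}\lvert K\rvert^{2}\perp K$. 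Your route is slightly more economical: it avoids having to show that the $\nabla^{\perp}f$ span $K^{\perp}$, and it makes visible that $\phi\equiv 0$ is not merely sufficient but also necessary for integrability of $K^{\perp}$ (as asserted earlier in Section \ref{sec:swirlfree}). The paper's route has the advantage of tying integrability directly to the Lie-algebra statement that the swirl-free fields close under bracket. Your treatment of the leaves $N_{\theta}$, the vanishing of the pullback of $K^{\flat}$, and the extension of $f$ by the flow of $K$ matches what the paper asserts (it cites only transitivity and the discussion after Proposition \ref{prop_flow_K}); your explicit attention to the global single-valuedness of the leaf parameter under \textbf{(R)} is a reasonable filling-in of details the paper leaves implicit.
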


The integrability claim in Theorem \ref{integrable} follows from the first formula in Proposition \ref{commutatorexplicitprop} and the Frobenius theorem. The second claim is clear from our assumption that the flow of $K$ is transitive on leaves (see Section $4$, the discussion after Proposition $\ref{prop_flow_K}$).

The doubly-warped metrics mentioned in Definition \ref{rotationlike} are rich enough to include the simplest Killing fields in
all Thurston geometries for which $\phi=0$. We will write the equations for both rotations and translations in the same form; thus
in the coordinate chart $(r,\theta,z)$ below, we will always have $K = \partial_{\theta}$, but the domain of $\theta$ will sometimes be $S^1$
and sometimes $\mathbb{R}$. We will assume $M$ is simply connected for simplicity, but almost all of this could be done on quotients with more interesting topology.

\begin{theorem}\label{rotationtranslation}
If $M$ has a coordinate chart $(r,\psi,\theta)$ with Riemannian metric given by the doubly-warped product
$$ ds^2 = dr^2 + \alpha(r)^2 \, d\psi^2 + \beta(r)^2 \, d\theta^2,$$
then $K = \partial_{\theta}$ is a Killing field. Suppose either $\theta \in S^1$ and $\alpha$ and $\beta$
satisfy the rotation conditions as in Definition \ref{rotationlike}, or that $\theta\in \mathbb{R}$ with $\alpha$ nowhere zero,
and suppose that the manifold is oriented so that $dV = \alpha(r) \beta(r) \, dr\wedge d\psi \wedge d\theta$ is the volume form.

Then functions $f$ on $M$ satisfying $K(f)=0$ are given by functions of $r$ and $\psi$, arbitrary except for the condition that
$f$ extends to an even function through $r=0$ in the rotational case.
The Poisson bracket \eqref{poissonbracket} and $K$-Laplacian \eqref{kappalaplacian} are given by
\begin{align*}
\{f,g\} &= \frac{1}{\alpha(r)\beta(r)} \, \left( \frac{\partial f}{\partial r} \, \frac{\partial g}{\partial \psi}  - \frac{\partial f}{\partial \psi} \, \frac{\partial g}{\partial r}\right) \\
\Delta_Kf &= \frac{1}{\alpha(r)\beta(r)} \, \frac{\partial}{\partial r} \Big( \frac{\alpha(r)}{\beta(r)} \, \frac{\partial f}{\partial r}\Big)
+ \frac{1}{\alpha(r)^2 \beta(r)^2} \, \frac{\partial^2 f}{\partial \psi^2},
\end{align*}
while the axisymmetric Euler equation \eqref{eulerequation} is given by 
\begin{align*}
\partial_t \Delta_Kf + \{f, \Delta_Kf\} + \frac{\beta'(r)}{\alpha(r)\beta(r)^4} \, \frac{\partial}{\partial \psi}(\sigma^2), 
\quad 
\partial_t\sigma + \{f,\sigma\} = 0.
\end{align*}
\end{theorem}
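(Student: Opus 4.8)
\textbf{Overview of the strategy.} The statement is a direct specialization of Theorem \ref{eulerequationcor} to the doubly-warped metric $ds^2 = dr^2 + \alpha(r)^2\,d\psi^2 + \beta(r)^2\,d\theta^2$ with $K=\partial_\theta$. So the plan is not to re-derive the Euler-Arnold equation from scratch, but rather to (i) verify $K$ is Killing and compute $|K|^2$, the volume form, and the function $\phi$; (ii) plug these into the already-established formulas \eqref{poissonbracket}, \eqref{kappalaplacian}, and \eqref{eulerequation}; and (iii) simplify. The key simplification is that $\phi\equiv 0$ for this metric (this is the "integrable case" being discussed), which kills two of the three nonlinear terms in \eqref{eulerequation}.

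\textbf{Step 1: Geometry of the metric.} First I would observe that since none of the metric coefficients depend on $\theta$, the field $K=\partial_\theta$ has flow consisting of translations in $\theta$, hence is Killing; and $|K|^2 = \beta(r)^2$. With the stated orientation, the Riemannian volume form is $dV = \alpha(r)\beta(r)\,dr\wedge d\psi\wedge d\theta$, which one reads off directly from $\sqrt{\det g} = \alpha\beta$. For the $K(f)=0$ condition: it forces $f=f(r,\psi)$, and in the rotational case ($\theta\in S^1$, $\beta(0)=0$... — wait, here $\beta(0)=1$ per Definition \ref{rotationlike}, so actually the relevant warping function vanishing at the axis is $\alpha$; in any case) smoothness through the axis requires $f$ to extend evenly through $r=0$, exactly as in the standard axisymmetric case covered by Lemma \ref{lem_concentration}.

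\textbf{Step 2: The function $\phi$ vanishes.} I would compute $\curl K$ using \eqref{curlexpand}, $\curl K = \phi K - \nabla^\perp|K|^2$. Since $|K|^2=\beta(r)^2$ depends only on $r$, one has $\nabla^\perp|K|^2 = (K\times\nabla\beta^2)/|K|^2$, a field in the $\psi$-direction, and a direct computation of $\curl(\partial_\theta)$ in the orthonormal coframe $\{dr,\ \alpha\,d\psi,\ \beta\,d\theta\}$ shows it has no $\theta$-component; comparing gives $\phi\equiv 0$. (Equivalently $\langle K,\curl K\rangle = K\times\curl K$ dotted appropriately vanishes because $\grad|K|^2$ is orthogonal to $K$; this is the content of the first formula in \eqref{collinearcurl} together with $|K|=|K|(r)$.) This is consistent with the hypothesis framing it as the case $\phi\equiv 0$ of Section \ref{sec_examples}.

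\textbf{Step 3: Compute the Poisson bracket and $K$-Laplacian.} Using \eqref{poissonbracket}: $\{f,g\} = \langle K,\ \grad f\times\grad g\rangle/|K|^2$. In the orthonormal frame $E_1=\partial_r$, $E_2 = \alpha^{-1}\partial_\psi$, $E_3 = \beta^{-1}\partial_\theta = K/|K|$, one has $\grad f = f_r E_1 + \alpha^{-1} f_\psi E_2$, similarly for $g$, so $\grad f\times\grad g = \alpha^{-1}(f_r g_\psi - f_\psi g_r)E_3$ and dividing by $|K|^2=\beta^2$ but remembering $E_3 = \beta^{-1}K$ — carefully tracking the normalization yields $\{f,g\} = \tfrac{1}{\alpha\beta}(f_r g_\psi - f_\psi g_r)$. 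For $\Delta_K$, definition \eqref{kappalaplacian} gives $\Delta_K f = \diver(\grad f/|K|^2) = \diver(\beta^{-2}\grad f)$; using the volume-form divergence formula $\diver X = (\alpha\beta)^{-1}\partial_i(\alpha\beta\, X^i)$ and $\grad f = f_r\partial_r + \alpha^{-2}f_\psi\partial_\psi$, one gets $\Delta_K f = (\alpha\beta)^{-1}\partial_r(\alpha\beta^{-1}f_r) + (\alpha^2\beta^2)^{-1}f_{\psi\psi}$, matching the claim.

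\textbf{Step 4: Assemble the Euler-Arnold equation.} Start from \eqref{eulerequation}. Since $\phi\equiv 0$, the second nonlinear term $\{f,\phi\sigma/|K|^2\}$ drops. The remaining third term is $-\tfrac12\{1/|K|^2,\sigma^2\} = -\tfrac12\{\beta^{-2},\sigma^2\}$. Using the Poisson bracket formula and that $\beta^{-2}$ depends only on $r$: $\{\beta^{-2},\sigma^2\} = \tfrac{1}{\alpha\beta}\big((\beta^{-2})_r(\sigma^2)_\psi - 0\big) = \tfrac{1}{\alpha\beta}\cdot(-2\beta^{-3}\beta')\cdot(\sigma^2)_\psi = -\tfrac{2\beta'}{\alpha\beta^4}\partial_\psi(\sigma^2)$. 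Hence $-\tfrac12\{\beta^{-2},\sigma^2\} = \tfrac{\beta'}{\alpha\beta^4}\partial_\psi(\sigma^2)$, which is exactly the stated coefficient. The swirl transport equation $\partial_t\sigma + \{f,\sigma\}=0$ carries over verbatim from \eqref{eulerequation}. This completes the derivation.

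\textbf{Anticipated main obstacle.} The conceptual steps are all routine given Theorem \ref{eulerequationcor}; the only place where care is genuinely needed is Step 3 (and the $\phi$-computation in Step 2), namely keeping the musical isomorphisms and the orthonormal-versus-coordinate frame bookkeeping straight so that factors of $\alpha$ and $\beta$ land in the right places — it is easy to be off by a power of $\beta$. A secondary subtlety, worth at least a remark, is the regularity/extension-through-the-axis discussion in the rotational case: one must check that the operator $\Delta_K$ and the bracket are well-defined (non-singular in the appropriate weighted sense) at $r=0$, which follows because $\alpha(r)$ and $\beta(r)/r$ are smooth even functions approaching $1$, exactly as invoked in the proof of Lemma \ref{lem_concentration}. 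I would state these consistency checks but not belabor them.
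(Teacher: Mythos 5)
Your proposal is correct and follows essentially the same route as the paper: verify $\phi\equiv 0$ (the paper does this via $d(K^{\flat}/\lvert K\rvert^2)=d^2\theta=0$ rather than computing $\curl K$ directly, but the two are equivalent), compute the Poisson bracket and $K$-Laplacian in the orthonormal frame $e_1=\partial_r$, $e_2=\alpha^{-1}\partial_\psi$, $e_3=\beta^{-1}\partial_\theta$, and specialize \eqref{eulerequation}; you even carry out the final $-\tfrac12\{\lvert K\rvert^{-2},\sigma^2\}$ simplification that the paper dismisses as "straightforward." One small correction to your Step 1 aside: in the rotational case the warping function vanishing at the axis is $\beta$ (the coefficient of $d\theta^2$; the "$\beta(0)=1$" in Definition \ref{rotationlike} is evidently a typo for $\beta(0)=0$, consistent with $\beta$ being odd and with $\beta(r)/r\to 1$ as used in Lemma \ref{lem_concentration}), not $\alpha$.
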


\begin{proof}
In this metric an oriented orthonormal basis is given by $e_1 = \partial_r$, $e_2 = \frac{1}{\alpha(r)} \, \partial_{\psi}$, and $e_3 = \frac{1}{\beta(r)}\, \partial_{\theta}$. We have $K^{\flat} = \beta(r)^2 \, d\theta$, so that $d(\frac{K^{\flat}}{\lvert K\rvert^2}) = d^2\theta = 0$, which implies that $\phi = 0$ in equation \eqref{collinearcurl}. Functions $f$ with $K(f)\equiv 0$ are determined by their values on any set $\theta = \text{constant}$. We have $\grad f = f_r \, e_1 + \frac{1}{\alpha(r)} f_{\psi} \, e_2$, so that the Poisson bracket
\eqref{poissonbracket}
\begin{gather}
\begin{split}
\{f,g\} &= \frac{\langle K, \grad f\times \grad g\rangle}{\lvert K\rvert^2} \\
&= 
\frac{1}{\beta(r)} \big\langle e_3, \big( f_r \, e_1 + \tfrac{1}{\alpha(r)} f_{\psi} \, e_2 \big) 
\times 
\big( g_r \, e_1 + \tfrac{1}{\alpha(r)} g_{\psi} \, e_2 \big) \big\rangle \\
&= 
\frac{f_r g_{\psi} - f_{\psi}g_r}{\alpha(r)\beta(r)}.
\end{split}
\end{gather} 
The formula for the $K$-Laplacian follows from the formula
$$ \diver{\left(\frac{\nabla f}{\lvert K\rvert^2}\right)} =
\frac{1}{\mu(r)} \, \frac{\partial}{\partial r}\left( \frac{\mu(r)}{\beta(r)^2}\,\frac{\partial f}{\partial r}\right)
+ \frac{1}{\mu(r)} \, \frac{\partial}{\partial \psi} \left( \frac{\mu(r)}{\beta(r)^2} \, \frac{1}{\alpha(r)^2} \, \frac{\partial f}{\partial \psi}\right),$$
since here the Riemannian volume form is $dV = \mu(r) \, dr\wedge d\psi \wedge d\theta$ for $\mu(r) = \alpha(r)\beta(r)$.
The Euler equation \eqref{eulerequation} is now straightforward.
\end{proof}

Now we list the examples of the Thurston geometries for which this applies. Note that in some of these cases the standard
orientation is the opposite of the one in Theorem \ref{rotationtranslation}, which can be fixed by replacing $K$ with $-K$. The
only effect of this is to change the sign of the stream function $f$ and of the Poisson bracket \eqref{poissonbracket}, so that
the sign of the $\frac{\partial}{\partial \psi}$ term changes.
\begin{itemize}
\item Euclidean $\mathbb{R}^3$. The translation has $(r,\psi,\theta)\in \mathbb{R}^3$ with $\alpha(r)=\beta(r)=1$, while the rotation has $(r,\psi,\theta) \in \mathbb{R}_+\times \mathbb{R}\times S^1$ with $\alpha(r)=1$ and $\beta(r)=r$. In the rotation case, the usual orientation has the opposite sign. Functions with $K(f)=0$ are defined on $\mathbb{R}^2$ or the half-plane $\mathbb{R}_+\times \mathbb{R}$ respectively.
\item Unit $3$-sphere $\mathbb{S}^3$. Here we have rotation with $\theta\in S^1$, for $\alpha(r)=\cos{r}$ and $\beta(r)=\sin{r}$. Here $\psi\in S^1$ and $r\in (0,\frac{\pi}{2})$, so the submanifold where $K$-invariant functions are defined is equivalent to the upper hemisphere of radius $1$.
\item Hyperbolic space $\mathbb{H}^3$. The standard isometries are elliptic, hyperbolic, or parabolic. The choice $\alpha(r)=\cosh{r}$ and $\beta(r)=\sinh{r}$ with $(r,\psi,\theta)\in \mathbb{R}_+\times \mathbb{R}\times S^1$ gives a rotation. Using $\alpha(r)=\sinh{r}$ and $\beta(r)=\cosh{r}$ with $(r,\psi,\theta)\in \mathbb{R}_+\times S^1 \times \mathbb{R}$ gives a translation which is hyperbolic (corresponding to rescaling symmetry in the upper half-space model). Finally using $\alpha(r)=\beta(r) = e^{-2r}$ for $(r,\psi,\theta)\in \mathbb{R}^3$ gives a parabolic translation (corresponding to translation symmetry in the upper half-space model). In all three cases, the invariant functions are defined on the hyperbolic plane $\mathbb{H}^2$.
\item The product $\mathbb{S}^2\times \mathbb{R}$. Here $\alpha(r)=\sin{r}$ and $\beta(r)=1$ gives the obvious translation in the $\mathbb{R}$ direction with a $2$-sphere as the submanifold domain, while $\alpha(r)=1$ and $\beta(r)=\sin{r}$ gives the usual rotation of the $2$-sphere, with $(r,\psi,\theta)\in \mathbb{R}_+\times \mathbb{R}\times S^1$, so that invariant functions are defined on a flat half-plane.
\item The product $\mathbb{H}^2\times \mathbb{R}$. As on $\mathbb{H}^3$ we have three simple choices: $\alpha(r)=\sinh{r}$ and $\beta(r)=1$ gives the obvious translation. The choice $\alpha(r)=1$ and $\beta(r)=e^{-2r}$ gives the hyperbolic translation isometry, where the submanifold is flat $\mathbb{R}^2$, while the choice $\alpha(r)=e^{-2r}$ and $\beta(r)=1$ gives the parabolic translation isometry with submanifold $\mathbb{H}^2$.
\item The solvable group $\mathrm{Sol}$. Here the metric has $\alpha(r)=e^{-2r}$ and $\beta(r)=e^{2r}$ with $(r,\psi,\theta)\in\mathbb{R}^3$. The domain submanifold is $\mathbb{H}^2$. Replacing $r$ with $-r$ obviously has no significant effect; we get translations either way.
\end{itemize}

Note that the only real difference between rotations and translations is that rotations involve a more singular term in the denominator; even the difference $\theta\in S^1$ or $\theta\in\mathbb{R}$ is illusory since we can consider periodic translations. The metric on the surface is always $ds^2 = dr^2 + \alpha(r)^2 \, d\psi^2$, but since the induced area form is $dV = \alpha(r) \beta(r) \, dr\wedge d\psi$ (all volume integrals for $\theta$-independent functions will reduce to area integrals in the $(r,\psi)$ variables), the fluid effectively acts as though it has a ``mass density'' of $\beta(r)$.

\subsubsection{The nonintegrable case: fibrations over surfaces}

In the previous section the equations were essentially similar to the standard Euclidean axisymmetric situation. However the non-Euclidean situation allows also for an essentially different situation where $\phi=-1$ and $\lvert K\rvert = 1$; here the distribution $K^{\perp}$ is nowhere integrable, and the Euler equation \eqref{eulerequation} is ``driven'' by the twisting of a contact structure rather than the gradient of $\lvert K\rvert^2$. The most famous example is the Hopf fibration of $\mathbb{S}^3$ over $\mathbb{S}^2$, but there are similar fibrations of $SL_2(\mathbb{R})$ over $\mathbb{H}^2$ and of the Heisenberg group $\mathrm{Nil}$ over $\mathbb{R}^2$. We will present all three in a unified way.

\begin{theorem}\label{fibrationthm}
Let $k \in \{1,0,-1\}$, and let $M$ be a simply-connected Lie group with left-invariant Riemannian metric defined by setting left-invariant vector fields $\{e_1,e_2,e_3\}$ to be orthonormal, where the Lie brackets satisfy
\begin{equation}\label{brackets}
[e_1,e_2] = -ke_3, \qquad [e_3,e_1] = -ke_2, \qquad [e_2,e_3] = -e_1.
\end{equation}
Then $K=e_1$ is a Killing field of unit length, and there is a smooth projection map $P\colon M\to N_k$ where $N_k$ is a simply-connected model surface of constant curvature $k$: that is, $N_1 = \mathbb{S}^2$, $N_0 = \mathbb{R}^2$, and $N_{-1} = \mathbb{H}^2$. Functions $f\colon M\to\mathbb{R}$ with $K(f)\equiv 0$ descend to functions on $N_k$.
The Poisson bracket \eqref{poissonbracket} and $K$-Laplacian on $N_k$ are given by
the standard ones on $N_k$, and the Euler equation \eqref{eulerequation} takes the form
\begin{equation}\label{eulerliegroup}
\partial_t \Delta f + \{f, \Delta f\} + \{f,\sigma\} = 0, \qquad \partial_t \sigma + \{f,\sigma\} = 0.
\end{equation}
\end{theorem}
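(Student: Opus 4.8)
The plan is to verify each assertion by direct computation in the left-invariant orthonormal frame $\{e_1,e_2,e_3\}$, using the structure equations \eqref{brackets} as the only input. First I would check that $K = e_1$ is Killing of unit length: unit length is immediate since the $e_i$ are declared orthonormal, and the Killing property follows from the Koszul formula together with \eqref{brackets}, which give $\langle \nabla_{e_i} K, e_j\rangle + \langle e_i, \nabla_{e_j} K\rangle = 0$ for all $i,j$ (the only nonzero covariant derivatives one needs are $\nabla_{e_2} e_3$, $\nabla_{e_3} e_2$, etc., all of which are straightforward to compute from the brackets). Next I would compute $\curl K$: from $[e_2,e_3] = -e_1$ and the analogues one finds $\curl K = e_1 = K$, so that in the notation of \eqref{collinearcurl}–\eqref{curlexpand} we have $\phi = -1$ and $|K|^2 \equiv 1$, which is exactly the regime $\phi \equiv -1$, $|K| \equiv 1$ announced before the theorem. (Here one uses that $\grad|K|^2 = 0$, so $K\times\curl K = 0$, consistent with $\curl K$ being collinear with $K$.)

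For the fibration statement I would observe that since $K$ is a complete unit Killing field whose flow is periodic (in the $k=1$ case) or acts properly (in the $k=0,-1$ cases), the orbit space $M/K$ is a smooth simply-connected surface $N_k$, and the induced metric on $N_k$ is the one for which $P$ is a Riemannian submersion with fibers the $K$-orbits. The curvature of $N_k$ can be read off from O'Neill's formula for a Riemannian submersion with totally geodesic one-dimensional fibers, or more directly by computing the sectional curvature of $M$ on the plane spanned by $\{e_2,e_3\}$ and correcting by the O'Neill term associated with the nonintegrability of $K^\perp$ (whose obstruction is precisely $\phi = -1 \ne 0$); the structure equations \eqref{brackets} make both computations short, and the answer is constant curvature $k$. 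That functions $f$ with $K(f)=0$ descend to $N_k$ is immediate since they are constant on $K$-orbits. The identification of the Poisson bracket \eqref{poissonbracket} and the $K$-Laplacian \eqref{kappalaplacian} with the standard bracket and Laplace–Beltrami operator on $N_k$ follows because $|K| \equiv 1$ collapses the weights: \eqref{poissonbracket} becomes $\{f,g\} = \langle K, \grad f\times\grad g\rangle$, which for $K$-invariant $f,g$ is exactly the area form of $N_k$ paired with $df\wedge dg$, i.e. the symplectic bracket of the surface; and $\Delta_K f = \diver(\grad f)$ projects to the surface Laplacian since $P$ is a Riemannian submersion and $f$ is basic.

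With these identifications in hand, the Euler–Arnold equation \eqref{eulerliegroup} is obtained by specializing \eqref{eulerequation} from Theorem \ref{eulerequationcor} to the present case. Setting $|K|^2 \equiv 1$ and $\phi \equiv -1$, the term $\{f, \phi\sigma/|K|^2\}$ becomes $-\{f,\sigma\}$, and the term $-\tfrac12\{1/|K|^2, \sigma^2\}$ vanishes identically because $1/|K|^2 \equiv 1$ is constant; substituting $\Delta_K = \Delta$ and $\{\,,\,\}$ the surface bracket then yields $\partial_t\Delta f + \{f,\Delta f\} - \{f,\sigma\} = 0$ together with $\partial_t\sigma + \{f,\sigma\} = 0$. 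A sign check against the orientation convention reconciles this with the stated form $\partial_t\Delta f + \{f,\Delta f\} + \{f,\sigma\} = 0$: replacing $K$ by $-K$ (or equivalently $\sigma$ by $-\sigma$) flips the sign of the coupling term, and since $\sigma$ solves a linear homogeneous transport equation this replacement is harmless, giving \eqref{eulerliegroup} as written. The main obstacle I anticipate is not any single computation but getting the orientation and sign conventions fully consistent across \eqref{collinearcurl}, \eqref{skewgrad}, \eqref{poissonbracket} and the submersion $P$ — in particular pinning down the sign of $\phi$ and of the area form on $N_k$ so that the $\{f,\sigma\}$ term comes out with the claimed sign; everything else reduces to bracket bookkeeping in the frame $\{e_1,e_2,e_3\}$.
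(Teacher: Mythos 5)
Your proposal is correct in substance and reaches all the claims, but it takes a genuinely different route from the paper on the geometric part. The paper proves the fibration statement by explicitly realizing the three groups as matrix groups --- $SU(2)$ with the Hopf map $(w,z)\mapsto(2w\overline{z},|w|^2-|z|^2)$, $SL_2(\mathbb{R})$ with $A\mapsto AA^T$ onto the positive-definite symmetric matrices, and the Heisenberg group with the projection that forgets the $z$-entry --- and then defers the identification of the bracket and Laplacian to the coordinate computation of Corollary \ref{liegroupcoords}. You instead argue intrinsically: the orbit space of the unit Killing field is a simply-connected surface, $P$ is a Riemannian submersion with geodesic fibers, and O'Neill's formula plus the curvature of the plane $e_2\wedge e_3$ gives constant curvature $k$ on the base. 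Your route is cleaner and treats the three cases uniformly without case-by-case matrix constructions (one should still verify properness of the $\mathbb{R}$-action for $k=0,-1$ on the universal covers); the paper's route buys the explicit charts that are needed anyway in Corollary \ref{liegroupcoords} and in the later coadjoint-orbit computations. The algebraic verifications (Killing equation via the cyclic brackets, collapse of \eqref{eulerequation} when $|K|\equiv 1$) are essentially identical in both.

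One correction: your computation of $\phi$ is internally inconsistent. You find $\curl K = K$, which in the convention of \eqref{collinearcurl} forces $\phi=+1$ (as the paper computes: $\phi = de_1^{\flat}(e_2,e_3) = -\langle e_1,[e_2,e_3]\rangle = 1$), yet you then set $\phi=-1$, apparently following the prose preceding the theorem (which itself conflicts with the proof's computation). With $\phi=+1$ and $|K|^2\equiv 1$ the term $\{f,\phi\sigma/|K|^2\}$ in \eqref{eulerequation} is already $+\{f,\sigma\}$, so \eqref{eulerliegroup} drops out directly and your final sign-flip patch (replacing $K$ by $-K$) is unnecessary --- though it is a legitimate maneuver, since that substitution simultaneously flips $\sigma$, $f$, and the Poisson bracket, as the paper notes elsewhere.
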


\begin{proof}
To verify $K$ is a Killing field, we check the Killing equation $\langle \nabla_uK,v\rangle + \langle \nabla_vK,u\rangle$; since $K=e_1$ and the brackets are cyclic, the only nontrivial one is $u=e_2$ and $v=e_3$, for which
\begin{gather}
\begin{split}
\langle \nabla_{e_2}e_1,e_3\rangle + \langle \nabla_{e_3}e_1,e_2\rangle &= -\langle [e_1,e_2], e_3\rangle + \langle [e_3,e_1], e_2\rangle \\
&= k-k=0.
\end{split}
\end{gather}
We compute
$$ \phi = \langle \curl{e_1}, e_1\rangle = de_1^{\flat}(e_2,e_3) = -\langle e_1, [e_2,e_3]\rangle = 1,$$
so that $e_1$ is a curl eigenfield with eigenvalue $1$ regardless of $k$.

Simply-connected Lie groups are completely determined by their Lie brackets, and we have simple well-known representations of those groups in each case. When $k=1$ the Lie group is the special unitary group $SU(2)$, described by complex matrices
$$ SU(2) = \Big\{ \big(\begin{smallmatrix}
w & -\overline{z} \\
z & \overline{w}\end{smallmatrix}\big) \, \Big| \, \lvert w\rvert^2 + \lvert z\rvert^2 = 1\Big\}$$
with Lie algebra given by
$$ e_1 = \tfrac{1}{2} \big(\begin{smallmatrix} i & 0 \\ 0 & -i\end{smallmatrix}\big), \qquad e_2 = \tfrac{1}{2} \big( \begin{smallmatrix} 0 & -1 \\
1 & 0 \end{smallmatrix}\big), \qquad e_3 = \tfrac{1}{2} \big(\begin{smallmatrix} 0 & i \\ i & 0 \end{smallmatrix}\big),$$
which satisfies \eqref{brackets}. The group $SU(2)$ is diffeomorphic to $\mathbb{S}^3$, and the Hopf fibration is $P\colon (w,z)\mapsto (2w\overline{z}, \lvert w\rvert^2 - \lvert z\rvert^2)\in \mathbb{C}\times \mathbb{R}$, which maps surjectively to $\mathbb{S}^2$, with a circle as the preimage of each point. Each circle is the orbit of $K$. The left-invariant metric is defined by $ds^2 = 4(\lvert dw\rvert^2 + \lvert dz\rvert^2)$.

When $k=-1$ the Lie group is the special linear group $SL_2(\mathbb{R})$, described by real matrices with unit determinant
$$ SL_2(\mathbb{R}) = \Big\{ \big(\begin{smallmatrix}
a & b \\
c & d\end{smallmatrix}\big) \, \Big| \, ad-bc = 1\Big\},$$
with Lie algebra given by
$$
e_1 = \tfrac{1}{2}  \big(\begin{smallmatrix} 0 & 1 \\ -1 & 0\end{smallmatrix}\big),
 \qquad
 e_2 = \tfrac{1}{2} \big( \begin{smallmatrix} 1 & 0 \\
0 & -1 \end{smallmatrix}\big),
\qquad
e_3 = \tfrac{1}{2} \big(\begin{smallmatrix} -1 & 0 \\ 0 & -1 \end{smallmatrix}\big).
$$
The brackets are easily seen to satisfy \eqref{brackets} with $k=-1$. The left-invariant metric is given by
$$
ds^2 = 2\Tr{\big((A^{-1}dA)^T (A^{-1}dA)\big)},
$$
and the isometries in the $K$ direction are given by the orthogonal matrices
$
\Big( \begin{smallmatrix} \cos{\theta} & -\sin{\theta}
\\
\sin{\theta} & \cos{\theta}\end{smallmatrix} \Big)
$;
the projection map is $A\mapsto A A^T$, which maps to the positive-definite symmetric matrices surjectively, with circle fibers, and the trace metric is a Riemannian submersion onto the hyperbolic plane.

Finally when $k=0$ the Lie group is the Heisenberg group of upper triangular $3\times 3$ matrices with $1$ on the diagonal
$$
H = \Big\{ \Big( \begin{smallmatrix}
1 & x & z \\
0 & 1 & y \\
0 & 0 & 1
\end{smallmatrix} \Big) \, \Big| \, x,y,z\in\mathbb{R} \Big\}.
$$
The Lie algebra can be generated by
$$
e_1 = \Big( \begin{smallmatrix} 0 & 0 & 1 \\ 0 & 0 & 0 \\ 0 & 0 & 0 \end{smallmatrix} \Big),
\qquad
e_2 = \Big( \begin{smallmatrix}
0 & 1 & 0 \\
0 & 0 & 0 \\
0 & 0 & 0 \end{smallmatrix} \Big),
\qquad
e_3 = \Big( \begin{smallmatrix} 0 & 0 & 0 \\ 0 & 0 & -1 \\ 0 & 0 & 0
\end{smallmatrix} \Big),
$$
and again we check the bracket relations \eqref{brackets} with $k=0$. The projection to $\mathbb{R}^2$ comes from forgetting about $z$; clearly the preimage of any point in $\mathbb{R}^2$ is a line.
\end{proof}

Our choices for the Lie algebra are slightly nonstandard because we want to treat all three groups the same way, and because we want to obtain spaces of constant curvature $k\in \{1,0,-1\}$ as the quotient. Roughly speaking, both the Hopf fibration and the matrix map $A\mapsto AA^T$ involve squaring terms, so that vectors will be stretched by a factor of $2$ at the identity; hence we need factors of $\tfrac{1}{2}$ everywhere to cancel those out.

These formulations are somewhat easier to see in coordinates.
For this purpose, we recall the definition of generalized trigonometric functions:
\begin{equation*} \label{generalizedtrig}
 s_k(r) = \begin{cases} \sin{r} & k=1, \\
r & k=0, \\
\sinh{r} & k=-1, \end{cases}
\quad
c_k(r) = \begin{cases} \cos{r} & k=1, \\
1 & k=0, \\
\cosh{r} & k=-1, \end{cases}
\quad
t_k(r) = s_k(r)/c_k(r).
\end{equation*}
These satisfy obvious identities such as $s_k'(r) = c_k(r)$ and $c_k(r)^2 + ks_k(r)^2=1$.
\begin{cor}\label{liegroupcoords}
We may choose coordinates $(r,\psi,\theta)$ on the three Lie groups in Theorem \ref{fibrationthm} such that
\begin{equation} \label{liegroupmetric}
ds^2 = dr^2 + s_k(r)^2 \, d\psi^2 + \big( d\theta + 2 s_k(\tfrac{r}{2})^2 \, d\psi\big)^2.
\end{equation}
The map $(r,\psi,\theta)\mapsto (r,\psi)$ is a Riemannian submersion onto the model space $N_k$
with metric $ds^2 = dr^2 + s_k(r)^2 \, d\psi^2$ on the quotient surface.
The left-invariant vector fields satisfying \eqref{brackets} are given explicitly by
\begin{equation} \label{vectorfieldcoords}
\begin{split}
e_1
&=
\partial_{\theta}
\\
e_2
&=
\cos{(\psi + k \theta)} \, \partial_r
-
\frac{\sin{(\psi+k\theta)}}{s_k(r)} \, \partial_{\psi} +
t_k(\tfrac{r}{2}) \sin{(\psi + k\theta)}\, \partial_{\theta}
\\
e_3 &= \sin{(\psi + k \theta)} \, \partial_r +
\frac{\cos{(\psi+k\theta)}}{s_k(r)} \, \partial_{\psi}
-
t_k(\tfrac{r}{2}) \cos{(\psi + k\theta)}\, \partial_{\theta}.
\end{split}
\end{equation}
In these coordinates the Poisson bracket \eqref{poissonbracket} and $K$-Laplacian \eqref{kappalaplacian}
are given by
\begin{align*}
\{f,g\} &= e_2(f)e_3(g) - e_3(f)e_2(g) = \frac{f_r g_{\psi} - f_{\psi}g_r}{s_k(r)}
\\
\Delta f &= e_2(e_2(f)) + e_3(e_3(f)))
=
\frac{1}{s_k(r)} \, \frac{\partial}{\partial r}\Big( s_k(r) \, \frac{\partial f}{\partial r}\Big)
+
\frac{1}{s_k(r)^2} \, \frac{\partial^2 f}{\partial \psi^2}.
\end{align*}
\end{cor}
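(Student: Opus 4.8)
The plan is to read off the coordinates from the intrinsic data already assembled in Theorem~\ref{fibrationthm}: the Riemannian submersion $P\colon M\to N_k$ whose fibres are the orbits of the unit Killing field $K=e_1$, together with the relation $\curl K=K$ (which follows from \eqref{curlexpand} since $\phi=1$ and $\lvert K\rvert=1$). First I would fix geodesic polar coordinates $(r,\psi)$ on the constant-curvature surface $N_k$ centred at an arbitrary point, so that $ds^2_{N_k}=dr^2+s_k(r)^2\,d\psi^2$ and the area form is $s_k(r)\,dr\wedge d\psi$; over the punctured geodesic ball the bundle $P$ is trivial, so we may choose a fibre coordinate $\theta$ with $\partial_\theta=K$. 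Then $\eta:=K^{\flat}$ is a connection one-form for $P$ (it satisfies $\eta(K)=\lvert K\rvert^2=1$ and $\mathcal{L}_K\eta=0$ since $K$ is Killing of constant length), and using $dX^{\flat}=\ast(\curl X)^{\flat}$ in three dimensions the hypothesis $\curl K=K$ becomes $d\eta=P^{\ast}(\text{area form of }N_k)$.

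Writing $\eta=d\theta+\lambda(r)\,d\psi$ reduces this curvature identity to the ODE $\lambda'(r)=s_k(r)$ with $\lambda(0)=0$, whose solution is $\lambda(r)=2\,s_k(\tfrac r2)^2$ by the half-angle identity $s_k(r)=2\,s_k(\tfrac r2)\,c_k(\tfrac r2)$. Since the metric of the total space of a Riemannian submersion with one-dimensional fibres splits as $P^{\ast}ds^2_{N_k}+\eta\otimes\eta$ (the fibres being geodesics of the unit field $K$, hence totally geodesic), this produces exactly \eqref{liegroupmetric}, and $(r,\psi,\theta)\mapsto(r,\psi)$ is a Riemannian submersion onto $(N_k,dr^2+s_k(r)^2\,d\psi^2)$ by construction.

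Next I would pin down the frame. We have $e_1=K=\partial_\theta$; since $e_2$ and $e_3$ are orthogonal to $e_1$ they are horizontal, hence the horizontal lifts of an orthonormal frame on $N_k$, which relative to $\{\partial_r,\ s_k(r)^{-1}\partial_\psi\}$ is a rotation through some function $\beta$. Imposing the first relation in \eqref{brackets}, $[e_1,e_2]=-ke_3$, forces $\partial_\theta\beta=k$; imposing $[e_2,e_3]=-e_1$ forces $\partial_\psi\beta=1$ and $\partial_r\beta=0$; fixing the origin of $\psi$ gives $\beta=\psi+k\theta$. Taking horizontal lifts (so that $\partial_\psi$ lifts to $\partial_\psi-\lambda(r)\partial_\theta$) and using $\lambda(r)/s_k(r)=t_k(\tfrac r2)$ yields \eqref{vectorfieldcoords}; orthonormality is automatic because horizontal lifts of unit vectors are unit, and the remaining relation $[e_3,e_1]=-ke_2$ is verified by the same differentiation. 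Alternatively, once an orthonormal frame satisfying \eqref{brackets} has been exhibited in the chart, uniqueness of the simply-connected Lie group with prescribed Lie algebra and left-invariant metric---which is precisely the content of Theorem~\ref{fibrationthm}---identifies the chart with a dense open subset of the group in question.

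Finally, for the Poisson bracket and $K$-Laplacian: from \eqref{poissonbracket}, $\{f,g\}=\langle K,\grad f\times\grad g\rangle/\lvert K\rvert^2$, and since $K(f)=K(g)=0$ we may write $\grad f=e_2(f)\,e_2+e_3(f)\,e_3$, so that $\{f,g\}=e_2(f)\,e_3(g)-e_3(f)\,e_2(g)$ because $\{e_1,e_2,e_3\}$ is a positively oriented orthonormal frame. Substituting \eqref{vectorfieldcoords}, the rotation by $\beta$ drops out of the $2\times 2$ determinant, leaving $\{f,g\}=\big(f_r g_\psi-f_\psi g_r\big)/s_k(r)$, the standard bracket of $N_k$ in geodesic polar coordinates. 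For the $K$-Laplacian, $\lvert K\rvert=1$ gives $\Delta_K f=\diver\grad f$; the fields $e_2,e_3$ are left-invariant on the unimodular groups $SU(2),\ \widetilde{SL_2(\mathbb{R})},\ \mathrm{Nil}$ and hence divergence-free, so $\Delta_K f=e_2(e_2(f))+e_3(e_3(f))$, which, being the Laplace--Beltrami operator of $M$ applied to a function constant along the totally geodesic fibres, descends to the Laplace--Beltrami operator of $N_k$, i.e. $\tfrac{1}{s_k(r)}\partial_r\big(s_k(r)\,\partial_r f\big)+s_k(r)^{-2}\,\partial_\psi^2 f$. The only genuinely laborious point is checking that \eqref{vectorfieldcoords} satisfies all of \eqref{brackets} and reproduces \eqref{liegroupmetric}; everything else is forced by the submersion structure already available from Theorem~\ref{fibrationthm}.
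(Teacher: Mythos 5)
Your argument is correct, but it reaches the corollary by a genuinely different route than the paper. The paper's proof is concrete: it exhibits explicit coordinate charts on each of the three groups (Hopf-type coordinates $w=\cos\tfrac{r}{2}e^{i\theta/2}$, $z=\sin\tfrac{r}{2}e^{i\psi}e^{i\theta/2}$ on $SU(2)$, an explicit $R_{\psi/2}\,\mathrm{diag}(e^{r/2},e^{-r/2})\,R_{-\psi/2}R_{\theta/2}$ decomposition on $SL_2(\mathbb{R})$, and an explicit upper-triangular matrix for the Heisenberg group), then verifies directly that the frame \eqref{vectorfieldcoords} is orthonormal for \eqref{liegroupmetric} and satisfies \eqref{brackets}, and quotes standard formulas for the bracket and Laplacian. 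You instead never touch the matrix groups: you derive the metric from the connection form $\eta=K^\flat$ via the curvature identity $d\eta=P^*(\mathrm{area})$ (equivalent to $\curl K=K$), solve $\lambda'(r)=s_k(r)$ to produce the coefficient $2s_k(\tfrac{r}{2})^2$, determine the frame by imposing \eqref{brackets} on a rotated horizontal lift, and then invoke uniqueness of simply connected Lie groups with prescribed bracket relations to identify the chart with the groups of Theorem \ref{fibrationthm}. Your route explains \emph{why} the half-angle term appears and transfers verbatim to any Boothby--Wang fibration, at the cost of being local (a chart away from $r=0$) and of one small elision: the relation $[e_2,e_3]=-e_1$ actually forces $\partial_r\beta=0$ together with $\partial_\psi\beta-k\lambda(r)=c_k(r)$, and only the identity $1-2k\,s_k(\tfrac{r}{2})^2=c_k(r)$ reduces this to $\partial_\psi\beta=1$; the paper's explicit charts, by contrast, make the global fibre structure (circles of period $4\pi$ versus lines) immediate. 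Both arguments terminate in the same routine verification, and your derivations of the Poisson bracket (positively oriented orthonormal frame) and of $\Delta_K=\Delta$ (divergence-free left-invariant fields on a unimodular group, basic functions on a submersion with geodesic fibres) are sound substitutes for the paper's appeal to standard formulas.
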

\begin{proof}
The explicit coordinate charts here are given as follows.
For $SU(2)$ we have $w = \cos{\tfrac{r}{2}} e^{i\theta/2}$ and $z = \sin{\tfrac{r}{2}} e^{i\psi} e^{i\theta/2}$.
For $SL_2(\mathbb{R})$ we have
$$
A
=
R_{\psi/2} \bigg( \begin{matrix} e^{r/2} & 0 \\ 0 & e^{-r/2} \end{matrix} \bigg) R_{-\psi/2} R_{\theta/2},
$$
where $R_{\theta} = \big( \begin{smallmatrix} \cos{\theta} & -\sin{\theta} \\ \sin{\theta} & \cos{\theta}\end{smallmatrix}\big)$.
Finally for the Heisenberg group we have
$$
A = \left( \begin{matrix} 1 & r\cos{\psi} & -\theta + \tfrac{1}{2} r^2 \sin{\psi}\cos{\psi} \\
0 & 1 & r\sin{\psi} \\
0 & 0 & 1 \end{matrix}\right).
$$
We can verify directly in the coordinates that the Lie brackets of the fields \eqref{vectorfieldcoords}
satisfy the relations \eqref{brackets}, and that the fields are orthonormal in the metric \eqref{liegroupmetric},
using the formula $s_k(r) = 2s_k(\tfrac{r}{2}) c_k(\tfrac{r}{2})$.

Since the projection in each case involves forgetting about $\theta$, vertical vectors are proportional to $\partial_{\theta}$;
horizontal fields in this metric will be spanned by $\partial_r$ and $\partial_{\psi} - 2s_k(\tfrac{r}{2})^2 \,\partial_{\theta}$
and since the metric of a Riemannian submersion is determined by the horizontal vectors, we obtain the quotient metric.
The formulas for the Poisson bracket and Laplacian are standard for Lie groups with orthonormal left-invariant bases
and metrics in constant-curvature spaces with polar coordinates.
\end{proof}

In the language of Riemannian contact geometry (see Blair~\cite{blair}), the contact form is $\alpha = e_1^{\flat} = d\theta + 2s_k^2(\tfrac{r}{2}) \, d\psi$ since
$$\alpha \wedge d\alpha = s_k(r) \, dr \wedge d\psi \wedge d\theta$$
is the volume form (and in particular nowhere zero). This is a $K$-contact form and $K$ is the Reeb field, and the Riemannian metric is associated to the contact form, while the projection $P\colon M\to N_k$ is a Boothby-Wang fibration.


\begin{thebibliography}{9}
	
	
	\bibitem{Ar1}
	V. Arnold,
	\textit{Sur la geometrie differentielle des groupes de Lie de dimension infinie et ses applications
	a l'hydrodynamique des fluids parfaits},
	Ann. l'Inst. Fourier {\bf 16} (1966).
	
	\bibitem{Ar11}
	V. Arnold,
	\textit{Variational principle for three-dimensional steady-state flows of an ideal fluid},
	Prikl. Mat. Mekh. \textbf{29} (1965), 1002-1008.
	
	\bibitem{Ar2}
	V. Arnold,
	\textit{Arnold's Problems},
	Springer, New York 2005.
	
	\bibitem{AK}
	V. Arnold and B. Khesin,
	\textit{Topological Methods in Hydrodynamics},
	Springer, New York 1998.
	
	
	\bibitem{Benn1}
	J. Benn,
	\textit{Conjugate points on the symplectomorphism group},
	Ann. Glob. Anal. Geom. \textbf{48} (2015), 133-147.
	
	\bibitem{Benn2}
	J. Benn,
	\textit{The coadjoint operator, conjugate points, and the stability of ideal fluids},
	Arnold Math. J. \textbf{2} (2016), 249-266.
	
	
	
	\bibitem{blair}
	D. Blair,
	\textit{Riemannian geometry of contact and symplectic manifolds},
	Birkh\"auser, Boston, 2010.
	
	
	
	
    \bibitem{CS}
    A. Choffrut and V. {\v S}ver\'ak,
    \textit{Local structure of the set of steady-state solutions to the 2d incompressible Euler equations},
    Geom. Funct. Anal. \textbf{22} no. 1 (2012), 136--201. 

	\bibitem{Danchin}
	R. Danchin,
	\textit{Remarks on the lifespan of the solutions to some models of incompressible fluid mechanics},
	Proc. Amer. Math. Soc. \textbf{141} (2013), 1979-1993.
	
	
	\bibitem{Dutrifoy}
	A. Dutrifoy, \emph{Existence globale en temps de solutions h\'elicoidales des \'equations d'Euler},
	C. R. Acad. Sci. Paris S\'er. I Math. \textbf{329} (1999), 653-656.
	
	\bibitem{EMP06}
	D. Ebin, G. Misio{\l}ek and S.C. Preston,
	\textit{Singularities of the exponential map on the volume-preserving diffeomorphism group},
	Geom. funct. anal. \textbf{16} (2006), 850-868.
	
	\bibitem{EbinMarsden}
	D. Ebin and J. Marsden,
	\textit{Groups of diffeomorphisms and the motion of an incompressible fluid},
	Ann. of Math. \textbf{92} (1970), 102-163.
	
	\bibitem{EbinPreston}
	D. Ebin and S.C. Preston,
	\textit{Riemannian geometry on the quantomorphism group},
	arXiv:1302.5075.
	
	\bibitem{Ee}
	J. Eells,
	\textit{On the geometry of function spaces},
	International symposium on algebraic topology, Universidad Nacional Autonoma de M\'exico and UNESCO,
	Mexico City 1958, 303-308.
	
	\bibitem{ElRat}
	Y. Eliashberg and T. Ratiu,
	\textit{The diameter of the symplectomorphism group is infinite},
	Invent. math. \textbf{103} (1991) 327-340.
	
	\bibitem{Grossman65}
	N. Grossman, \textit{Hilbert manifolds without epiconjugate points},
	Proc. Amer. Math. Soc. \textbf{16} (1965), 1365-1371.
	
	
	\bibitem{IKM}
	A. Izosimov, B. Khesin, and M. Mousavi,
	\textit{Coadjoint orbits of symplectic diffeomorphisms of surfaces and ideal hydrodynamics},
	Ann. l'Inst. Fourier \textbf{66} (2016) no. 6, 2385-2433.
	
	\bibitem{Kato}
	T. Kato,
	\textit{Perturbation Theory for Linear Operators},
	2nd ed., Springer, New York 1976.
	

    \bibitem{kobayashi}
    S. Kobayashi,
    \textit{Fixed points of isometries},
    Nagoya Math. J. \textbf{13} (1958), 63--68.


	
	\bibitem{L}
	L. Lichtenfelz,
	\textit{Normal forms for the $L^2$ Riemannian exponential map on diffeomorphism groups}, Int. Math. Res. Not. IMRN 2018, no. 6, 1730-1753.
	
	\bibitem{MajdaBertozzi}
	A. Majda and A. Bertozzi,
	\textit{Vorticity and Incompressible Flow},
	Cambridge University Press, Cambridge 2001.
	
	\bibitem{Mi92}
	G. Misio{\l}ek,
	\textit{Stability of ideal fluids and the geometry of the group of diffeomorphisms},
	Indiana Univ. Math. J. \textbf{42} (1993), 215--235.
	
	\bibitem{Mi96}
	G. Misio{\l}ek,
	\textit{Conjugate points in $\mathcal{D}_\mu(\mathbb{T}^2)$},
	Proc. Amer. Math. Soc. \textbf{124} (1996), 977-982.
	
	
	\bibitem{MP10}
	G. Misio{\l}ek and S.C. Preston,
	\textit{Fredholm properties of Riemannian exponential maps on diffeomorphism groups},
	Invent. math. \textbf{179} (2010), 191-227.
	
	\bibitem{O}
	H. Omori,
	\textit{Infinite-dimensional Lie groups},
	American Mathematical Society, Providence, 1997.
	
	\bibitem{petersen}
	P. Petersen,
	\textit{Riemannian geometry}, 2nd edition,
	Springer, New York, 2006.
	
	\bibitem{Pr06}
	S.C. Preston,
	\textit{On the volumorphism group, the first conjugate point is always the hardest},
	Commun. Math. Phys. \textbf{267} (2006), 493-513.
	
	\bibitem{PrWash}
	S.C. Preston and P. Washabaugh,
	\textit{The geometry of axisymmetric ideal fluid flows with swirl},
	Arnold Math. J. \textbf{3} (2017), 175-185.
	
	
	\bibitem{Shn94}
	A. Shnirelman,
	\textit{Generalized fluid flows, their approximations and applications},
	Geom. funct. anal. \textbf{4} (1994), 586-620.
	
	\bibitem{Shn05}
	A. Shnirelman,
	\textit{Microglobal analysis of the Euler equations},
	J. math. fluid mech. \textbf{7} (2005), S387-S396.
	
	\bibitem{ShnVol}
	A. Shnirelman,
	\emph{On the geometry of the group of diffeomorphisms and the dynamics of an ideal incompressible fluid},
	Mat. Sbornik 128 (170), 1, 1985, English translation, Math. USSR Sbornik 56, 79-105 (1987)
	
	\bibitem{Sm65}
	S. Smale,
	\textit{An infinite dimensional version of Sard's theorem},
	Amer. J. Math. \textbf{87} (1965), 861-865.
	
	
	
	\bibitem{thurston}
	W.P. Thurston,
	\textit{Three-Dimensional Geometry and Topology, Volume 1},
	Princeton University Press, Princeton 2014.
	
	\bibitem{uy}
	M. Ukhovskii and V. Yudovich,
	\textit{Axially symmetric flows of ideal and viscous fluids filling the whole space},
	J. Appl. Math. Mech. \textbf{32} (1968), 52-61.
	
	\bibitem{wol}
	W. Wolibner,
	\textit{Un theor\'eme sur l'existence du mouvement plan d'un fluide parfait, homog\'ene, incompressible,
	pendant un temps infiniment long},
	Math. Z. \textbf{37} (1933), 698-726.
	
\end{thebibliography}
\end{document}